\documentclass{article}

\usepackage{amsmath, amsthm, amsfonts, amssymb, amscd, mathrsfs}

\usepackage{color} 

\theoremstyle{plain}
\newtheorem{Thm}{Theorem}[section]
\newtheorem*{Thm*}{Theorem}
\newtheorem{Prop}[Thm]{Proposition}
\newtheorem{Cor}[Thm]{Corollary}
\newtheorem{Lem}[Thm]{Lemma}

\theoremstyle{definition}
\newtheorem{Def}[Thm]{Definition}

\numberwithin{equation}{section}

\renewcommand{\P}{\mathcal{P}}
\newcommand{\Q}{\mathbb{Q}}
\newcommand{\C}{\mathbb{C}}
\newcommand{\Z}{\mathbb{Z}}
\newcommand{\R}{\mathbb{R}}
\newcommand{\QG}{\Q[G]}
\newcommand{\CG}{\C[G]}
\newcommand{\FG}{F[G]}
\newcommand{\FH}{F[H]}
\newcommand{\G}{\mathcal{G}}
\renewcommand{\H}{\mathcal{H}} 
\newcommand{\nn}{\mathcal{N}}
\renewcommand{\L}{\mathcal{L}}
\newcommand{\K}{\mathcal{K}}
\newcommand{\M}{\mathcal{M}}
\newcommand{\dsum}{\displaystyle\sum}
\newcommand{\bmid}{\;\big|\;}

\DeclareMathOperator{\Aut}{Aut}
\DeclareMathOperator{\Span}{Span}

\DeclareMathOperator*{\Inn}{Inn}
\DeclareMathOperator*{\Stab}{Stab}

\newcommand{\thmref}[1]{Theorem \ref{#1}}
\newcommand{\corref}[1]{Corollary \ref{#1}}
\newcommand{\lemref}[1]{Lemma \ref{#1}}
\newcommand{\propref}[1]{Proposition \ref{#1}}

\newcommand{\secref}[1]{Section \ref{#1}}

\allowdisplaybreaks[4]
\raggedbottom

\begin{document}

\title{Primitive Idempotents of Schur Rings}
\author{Andrew Misseldine\footnote{Andrew Misseldine, Department of Mathematics, Brigham Young University, Provo, UT, emisseldine@math.byu.edu}}

\maketitle

\begin{abstract}
In this paper, we explore the nature of central idempotents of Schur rings over finite groups. We introduce the concept of a lattice Schur ring and  explore properties of these kinds of Schur rings. In particular, the primitive, central idempotents of lattice Schur rings are completely determined. For a general Schur ring $S$, $S$ contains a maximal lattice Schur ring, whose central, primitive idempotents form a system of pairwise orthogonal, central idempotents in $S$. We show that if $S$ is a Schur ring with rational coefficients over a cyclic group, then these idempotents are always primitive and are spanned by the normal subgroups contained in $S$. Furthermore, a Wedderburn decomposition of Schur rings over cyclic groups is given. Some examples of Schur rings over non-cyclic groups will also be explored.

\textbf{Keywords}:
Schur Ring, cyclic group, primitive idempotent, group ring, Wedderburn decomposition

\textbf{AMS Classification}: 
20C05, 
17C27, 
16D70 
\end{abstract}


In 1950, Perlis and Walker \cite{Perlis} published the following result on the rational group algebra of a finite abelian group:

\begin{Thm*}\nonumber Let $\zeta_n = e^{2\pi i/n}\in \C$. Let $G$ be a finite abelian group of order $n$. Then 
\[\QG \cong \bigoplus_{d\mid n} a_d\Q(\zeta_d),\] where $a_d$ is the number of cyclic subgroups (or cyclic quotients) of $G$ of order $d$. In particular, if $G = Z_n$ is a cyclic group of order $n$, then 
\[\Q[Z_n] \cong \bigoplus_{d\mid n} \Q(\zeta_d).\]
 \end{Thm*}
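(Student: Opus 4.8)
The plan is to treat the cyclic case directly by a polynomial computation, and then deduce the general abelian statement from a Galois-orbit count on the character group, reducing the multiplicity $a_d$ to an elementary counting identity.

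For the cyclic group $G = Z_n$, I would begin with the identification $\Q[Z_n] \cong \Q[x]/(x^n-1)$, sending a fixed generator of $Z_n$ to the class of $x$. Over $\Q$ the polynomial factors as $x^n - 1 = \prod_{d \mid n} \Phi_d(x)$, where $\Phi_d$ is the $d$-th cyclotomic polynomial; the key classical input is that each $\Phi_d$ is irreducible over $\Q$ with $\Q[x]/(\Phi_d(x)) \cong \Q(\zeta_d)$. Since the factors $\Phi_d$ are pairwise coprime, the Chinese Remainder Theorem gives $\Q[x]/(x^n-1) \cong \bigoplus_{d \mid n} \Q[x]/(\Phi_d(x)) \cong \bigoplus_{d\mid n}\Q(\zeta_d)$, which is exactly the claimed decomposition, since here $a_d = 1$ for every $d \mid n$ (a cyclic group has a unique subgroup of each order dividing $n$).

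For general finite abelian $G$, I would pass through the complex group algebra. Because $G$ is abelian, Maschke's theorem together with commutativity gives $\CG \cong \bigoplus_{\chi \in \hat G}\C$, a product of copies of $\C$ indexed by the irreducible characters, that is, the dual group $\hat G$. The rational Wedderburn components of $\QG$ are the simple algebras obtained by grouping these one-dimensional complex components into orbits under the natural action of $\operatorname{Gal}(\overline\Q/\Q)$, which acts on characters through its action on roots of unity. For a character $\chi$ of order $d$ in $\hat G$, its values generate the cyclotomic field $\Q(\zeta_d)$, its Galois orbit has size $\varphi(d)$, and the simple rational component attached to that orbit is isomorphic to $\Q(\zeta_d)$; thus each orbit of order-$d$ characters contributes one summand $\Q(\zeta_d)$.

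It then remains to count orbits. The number of characters of order exactly $d$ equals the number of elements of order $d$ in $\hat G \cong G$, and since a cyclic subgroup of order $d$ has exactly $\varphi(d)$ generators, this count equals $\varphi(d)$ times the number $a_d$ of cyclic subgroups of order $d$. Dividing by the orbit size $\varphi(d)$ shows there are precisely $a_d$ orbits contributing $\Q(\zeta_d)$, yielding $\QG \cong \bigoplus_{d\mid n} a_d\Q(\zeta_d)$; the parenthetical identification with cyclic quotients then follows from the duality $\hat G \cong G$. I expect the main obstacle to be the claim in the previous paragraph that the rational component of a Galois orbit of order-$d$ characters is exactly $\Q(\zeta_d)$ rather than some larger division algebra. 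This amounts to checking that the relevant Schur index is trivial, which holds here because the characters are linear, so the endomorphism algebra of the corresponding rational irreducible representation is simply the field generated by the character values.
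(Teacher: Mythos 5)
Your proof is correct, but there is nothing in the paper to compare it against: this statement is the Perlis--Walker theorem, which the paper quotes purely as background, with a citation to \cite{Perlis}, and never proves. Taken on its own merits, your argument is sound and self-contained. The cyclic case via $\Q[Z_n]\cong\Q[x]/(x^n-1)$, the factorization $x^n-1=\prod_{d\mid n}\Phi_d(x)$ into irreducible cyclotomic polynomials, and the Chinese Remainder Theorem is the standard route. In the general abelian case, the key points all check out: a character $\chi$ of order $d$ in $\widehat{G}$ has image exactly the group of $d$-th roots of unity, so its value field is $\Q(\zeta_d)$; its Galois orbit is $\{\chi^k : k\in(\Z/d\Z)^\times\}$, of size $\varphi(d)$, using surjectivity of $(\Z/n\Z)^\times\to(\Z/d\Z)^\times$; and the count of order-$d$ characters is $\varphi(d)\,a_d$ since each such character generates a unique cyclic subgroup of $\widehat{G}\cong G$ of order $d$ with $\varphi(d)$ generators, giving exactly $a_d$ orbits. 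Your worry about Schur indices is dispatched even more simply than you suggest: $\QG$ is commutative, so every Wedderburn component is automatically a field, and it must be the field of character values by dimension count (the component has $\Q$-dimension equal to the orbit size $\varphi(d)=[\Q(\zeta_d):\Q]$ and maps onto $\Q(\zeta_d)$ via $\chi$). An alternative classical route, closer in spirit to Perlis and Walker, is induction via the structure theorem, using $\Q[G\times Z_m]\cong\Q[G]\otimes_\Q\Q[Z_m]$ and the fact that $\Q(\zeta_a)\otimes_\Q\Q(\zeta_b)$ is a direct sum of copies of $\Q(\zeta_{\operatorname{lcm}(a,b)})$; your character-orbit argument is cleaner and matches the perspective the paper itself alludes to in the introduction, namely obtaining rational idempotents by averaging Galois conjugates of the primitive idempotents of $\CG$.
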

 \noindent One consequence of the above decomposition is a  solution to the isomorphism problem of group rings over finite abelian groups with integer coefficients. Since then, several other results about rational group algebras have been published and the study of rational group algebras of a finite group is still an active field of representation theory. 

Of particular importance is the problem of finding the set of primitive central idempotents of the rational group algebra $\QG$. An element $\varepsilon$ of a ring $R$ is \emph{idempotent} if $\varepsilon^2 = \varepsilon$ and is \emph{central} in $R$ if $\varepsilon \in Z(R)$. In a semisimple ring such as $\QG$, all two-sided ideals are generated by a central idempotent. We say that a central idempotent is \emph{primitive} if it cannot be expressed as a sum of two nonzero central idempotents. A semisimple ring may be expressed as a direct sum of indecomposable two-sided ideals, called a \emph{Wedderburn decomposition}, each of which is principal and generated by a primitive central  idempotent. In this situation products of distinct ideals are trivial, and hence the primitive central  idempotents are pairwise orthogonal. Each central idempotent is a sum of primitive central idempotents. Thus, the primitive central idempotents are the atomic building blocks associated to the ideal structure of $\QG$. 

In the case of the complex group algebra, it is well known that the central primitive idempotents can be computed using the irreducible characters of the group. Averaging the Galois conjugates of each primitive central idempotent in $\CG$, the idempotents of the group algebra $F[G]$ can be computed for any subfield $F\subseteq \C$. In particular, the primitive central idempotents of $\QG$ can be computed in this way. Although this is possible using the characters, it is often computationally laborious to compute the central idempotents of $\QG$ by this method. Instead, character-free methods have been developed to compute these idempotents using the subgroups of $G$.

Character-free formulas for the primitive central idempotents of a finite abelian group algebra with rational coefficients are outlined in Chapter VII of \cite{Jespers96}, which we reproduce below in \propref{thm:AbelianIdempotents}. These formulas were later simplified and extended by Jespers, Leal, and Paques \cite{Jespers03} to finite nilpotent groups and by  Olivieri, del R\'{i}o, and Sim\'{o}n \cite{Rio04} to finite abelian-by-supersolvable groups. Other recent papers on the primitive central idempotents of $\QG$ include Olivieri and del R\'{i}o \cite{Rio03},  Broche and  del R\'{i}o \cite{Rio07}, Ferraz and Polcino Milies \cite{Milies07}, Van Gelder and Olteanu \cite{Gelder11}, Jespers, Olteanu, and del R\'{i}o \cite{Jespers12}, and Jespers, Olteanu, and Van Gelder \cite{Jespers13}.

The group algebra is a special example of a class of algebras called Schur rings. The Schur rings were originally developed by Schur and Wielandt in the first half of the 20th century. Schur rings were first used to study permutation groups, but in later decades applications of Schur rings have emerged in combinatorics, graph theory, and design theory \cite{KlinPoschel, Ma}.

The purpose of this paper is to extend the formulas for the primitive central idempotents of $\QG$ to all Schur rings over $G$, when $G$ is a finite cyclic group (\thmref{thm:ScycIdem}) and to extend the Wedderburn decomposition of $\QG$ given by Perlis and Walker to all Schur rings over a finite cyclic group (\thmref{thm:Sperlis}). We also provide examples of Schur rings over abelian groups where these formulas cannot be extended.  In group algebras, for each lattice of normal subgroups of a finite group, there corresponds a family of central idempotents. These lattices of normal subgroups naturally give rise to Schur rings. Furthermore, these systems of idempotents can often capture the primitive idempotents of related Schur rings. In the case of cyclic groups, it will be shown that the central, primitive idempotents of a Schur ring correspond to the lattice of $S$-subgroups.

In \secref{sec:Schur} Schur rings and their elementary properties are presented. This section also focuses on Cayley maps, these being maps on group algebras which are induced from group homomorphisms, and we also provide criteria for when the Cayley image of a Schur ring is also a Schur ring. In \secref{sec:IdemGroup}, lattices of normal subgroups of finite groups will be used to construct complete systems of orthogonal central idempotents in the group algebra and properties of these system of idempotents are explored. From this, the formula for the primitive central idempotents of lattice Schur rings is proven (\thmref{thm:LatticeIdem}). Similarly, in \secref{sec:IdemSchur}, lattices of normal $S$-subgroups are used to build complete systems of orthogonal idempotents in Schur rings. They are shown to be primitive idempotents when the group is cyclic.  In \secref{sec:DecompSchur}, a Wedderburn decomposition of Schur rings over cyclic groups is provided. \secref{sec:noncyclic} offers a few examples of Schur rings over abelian groups and considers their primitive idempotents.

Throughout let $G$ denote a finite group and $F$ a field whose characteristic does not divide $|G|$. For each $A\subseteq G$, let $\overline{A}$ denote the element $\sum_{g\in A} g$ in the group algebra $F[G]$. The cyclic group of order $n$ will be denoted by $Z_n$.


\section{Schur Rings}\label{sec:Schur}
\begin{Def} Let $\{C_1, C_2, \ldots, C_r\}$ be a partition of a finite group $G$ and let $S$ be the subspace of $F[G]$ spanned by $\overline{C_1}, \overline{C_2},\ldots \overline{C_r}$. We say that $S$ is a \emph{Schur Ring} over $F[G]$ if 
\begin{enumerate}
\item $C_1 = \{1\}$, 
\item For each $i$, there is a $j$ such that $(C_i)^{-1} = C_j$,
\item For each $i$ and $j$, $\overline{C_i}\cdot\overline{C_j} = \dsum_{k=1}^r \lambda_{ijk}\overline{C_k}$, for $\lambda_{ijk}\in F$.
\end{enumerate}
\end{Def}

\begin{Def} Let $S$ be a Schur ring over $F[G]$ and let $C\subseteq G$. We say that $C$ is an \emph{$S$-set} of $G$ if $\overline{C}\in S$. If $C$ is also a subgroup of $G$, then we say that $C$ is an \emph{$S$-subgroup} of $G$. If $C$ is one of the classes associated to the partition of $G$, then $C$ is called an \emph{$S$-class}.
\end{Def}

Every finite group algebra $F[G]$ is a Schur ring, resulting from the partition of singletons on $G$. The partition $\{\{1\}, G\setminus \{1\}\}$ affords a Schur ring, called the \emph{trivial Schur ring} of $G$.

Let $\H\le \Aut(G)$ and \[F[G]^\H = \{\alpha \in F[G]\mid \sigma(\alpha) = \alpha,\; \text{for all}\; \sigma\in \H\}.\]  Then $\FG^\H$ is a Schur ring afforded by the partition of $G$ corresponding to the orbits of the $\H$-action on $G$. These Schur rings are referred to as \emph{orbit Schur rings}. The center of $\FG$ is an orbit Schur ring with $\H=\Inn(G)$.

Let $S$ and $T$ be Schur rings over $F[G]$ and $F[H]$, respectively. We naturally can view $G$ and $H$ as subgroups of $G\times H$. The Schur rings $S$ and $T$ provide partitions of $G$ and $H$, respectively. Thus, we can create a partition on $G\times H$ by taking all possible products of $S$-classes and $T$-classes. This product partition affords a Schur ring $S\cdot T$, called the \emph{dot product} of $S$ and $T$. Furthermore, $S\cdot T \cong S\otimes_F T$, as $F$-algebras.

Let $1 < K \le H <G$  be a sequence of finite groups such that $K\trianglelefteq G$. Let $S$ be a Schur ring over $H$ and $T$ a Schur ring over $G/K$. Let $\pi : G \to G/K$ be the quotient map.  Let $\P$ be the partition of $G/K$ corresponding to $T$. Then $\pi^{-1}(\P)$ provides a partition of $G$ such that $K\in \pi^{-1}(\P)$. In particular, the classes in $\pi^{-1}(\P)$ are unions of cosets of $K$. Assume also that  $H/K$ is a $T$-subgroup, $K$ is an $S$-subgroup, and $\pi(S) = T\cap F[H/K]$, where the map $\pi$ is understood to be the linear extension of $\pi : G\to G/N$ to the group algebra $\FG$. Thus, the partitions corresponding to $S$ and $\pi^{-1}(\P)$ are compatible on their overlap, and we can refine the classes in $H$ from $\pi^{-1}(\P)$ using the partition associated to $S$. This constructs a Schur ring $S\wedge T$ over $G$ such that $(S\wedge T)\cap F[H] = S$ and $\pi(S\wedge T) = T$. A Schur ring of the form $S\wedge T$ is referred to as a \emph{wedge product} of Schur rings. Let $\pi^{-1}(T)$ denote the subalgebra of $F[G]$ afforded by the partition $\pi^{-1}(\P)$. Then $\pi^{-1}(T)$ is isomorphic to the Schur ring $T$ via the map $\pi : \pi^{-1}(T) \to T$ and is an ideal of $S\wedge T$. Viewing $S$ as a subalgebra of $S\wedge T$, we have $S\wedge T = S + \pi^{-1}(T)$. See \cite{LeungI} for further treatment of  wedge products.

\begin{Thm}[Leung and Man \cite{LeungI, LeungII}]\label{thm:LeungMan} Let $G = Z_n$ and let $S$ be a Schur ring over $G$. Then $S$ is trivial, an orbit ring, a dot product of Schur rings, or a wedge product of Schur rings. \end{Thm}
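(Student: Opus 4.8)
The plan is to analyze $S$ through the lattice of its $S$-subgroups, splitting into a primitive case and an imprimitive case. The essential tool is the Schur--Wielandt principle: because the $S$-classes form a partition of $G$, the level sets (fibers of constant coefficient) of any element of $S$ are themselves $S$-sets, and the subgroup generated by any $S$-set is an $S$-subgroup. This lets me read $S$-subgroups off directly from the arithmetic of $S$ and decide whether $S$ is primitive.

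If $S$ is primitive, so that its only $S$-subgroups are $1$ and $G$, I would appeal to the classical theory of Schur, Burnside, and Wielandt on primitive permutation groups containing a regular cyclic subgroup. When $n$ is composite this forces the associated group to be $2$-transitive, so $S$ is the trivial Schur ring; when $n = p$ is prime, every Schur ring over $Z_p$ is an orbit ring $\FG^\H$ for some $\H \le \Aut(Z_p)$. In both subcases $S$ falls under the first two alternatives of the theorem.

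If $S$ is imprimitive, fix a nontrivial proper $S$-subgroup $K$; since $G$ is abelian, $K \trianglelefteq G$. Let $\pi : G \to G/K$ be the quotient map. By the Cayley-map criteria of \secref{sec:Schur}, $T := \pi(S)$ is a Schur ring over the smaller cyclic group $G/K$, and $S \cap F[K]$ is a Schur ring over $K$. I then distinguish two possibilities. If $G$ admits a coprime factorization $Z_n = Z_{n_1} \times Z_{n_2}$, where $n = n_1 n_2$ with $\gcd(n_1,n_2) = 1$, for which every $S$-class is a product $A \times B$ with $A \subseteq Z_{n_1}$ and $B \subseteq Z_{n_2}$, then the induced partitions exhibit $S$ as the dot product of Schur rings over the two factors. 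Otherwise --- and this is the only option when $n$ is a prime power --- I would seek an intermediate subgroup $K \le H < G$ marking where the partition of $S$ genuinely refines the pullback of $T$, set $S_1 = S \cap F[H]$ and $S_2 = T$, and verify the three compatibility conditions ($H/K$ an $S_2$-subgroup, $K$ an $S_1$-subgroup, and $\pi(S_1) = S_2 \cap F[H/K]$) that identify $S$ as the wedge product $S_1 \wedge S_2$.

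The crux is the imprimitive case. Merely exhibiting an $S$-subgroup is easy; the difficulty is to prove that the partition is rigid enough around $K$ that exactly one of the two product structures is forced, and then to check the compatibility conditions precisely. In particular, the correct $H$ is in general neither $K$ nor $G$: taking $H = K$ recovers only the coarsest Schur ring with quotient $T$, which is usually strictly larger than $S$. Pinning down the subgroup on which $\pi$ carries $S \cap F[H]$ exactly onto $S_2 \cap F[H/K]$, and confirming $S = S_1 + \pi^{-1}(T)$, is the technical heart of the Leung--Man analysis. A secondary but unavoidable obstacle is that the primitive base case is not self-contained: it rests on the Burnside--Wielandt classification of primitive groups containing a regular cyclic subgroup, which I would import as a known result.
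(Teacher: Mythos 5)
First, a point of comparison: the paper does not prove this theorem at all --- it is imported as a known classification, with the two cited papers of Leung and Man \cite{LeungI, LeungII} serving as the proof. So the question is whether your sketch stands on its own, and it does not. Your outline does faithfully reproduce the known strategy: the Schur--Wielandt principle to detect $S$-subgroups, the primitive/imprimitive dichotomy, and, in the primitive case, the Burnside--Schur theorem (a primitive permutation group containing a regular cyclic subgroup of composite order is $2$-transitive, so a primitive Schur ring over $Z_n$ with $n$ composite is trivial) together with the classical fact that every Schur ring over $Z_p$ is an orbit ring. Importing those results is legitimate and unavoidable; that part of your plan is sound.

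The genuine gap is exactly where you yourself locate it, and locating it is not the same as closing it. In the imprimitive case you assert a dichotomy: either the $S$-classes factor along a coprime decomposition $Z_n = Z_{n_1}\times Z_{n_2}$ (dot product), or there exist $1 < K \le H < G$ such that every $S$-class outside $H$ is a union of $K$-cosets and $\pi(S\cap F[H]) = \pi(S)\cap F[H/K]$ (wedge product). Nothing in your argument shows that one of these two configurations must occur. The existence of a nontrivial proper $S$-subgroup $K$ gives you the Schur rings $\pi(S)$ and $S\cap F[K]$, but it does not produce an $H$ with $S = (S\cap F[H]) + \pi^{-1}(\pi(S))$, nor does it force the classes to factor in the coprime case: a priori the partition could interlace the subgroup and quotient structures so that it is neither a product partition nor a union of $K$-cosets away from any proper $H$. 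Proving that this cannot happen for cyclic groups --- the ``rigidity'' you name --- is the entire content of the two Leung--Man papers, and it is precisely the step your proposal replaces with ``I would seek \ldots and verify.'' As it stands, the proposal is a correct roadmap of the known proof with its central step left open, not a proof; note also that this rigidity genuinely uses that $G$ is cyclic, since the paper's $Z_5\times Z_5$ lattice Schur ring example shows the trichotomy fails for general abelian groups.
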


Define additional operations on $\FG$ as follows: 
\[* : \FG \to \FG :\qquad \left(\sum_{g\in G} \alpha_gg\right)^* = \sum_{g\in G} \alpha_gg^{-1}\] and the \emph{Hadamard product} \[\circ : \FG\times \FG \to \FG: \qquad \left(\sum_{g\in G} \alpha_gg\right) \circ \left(\sum_{g\in G}\beta_gg\right) = \sum_{g\in G} \alpha_g\beta_gg.\] Schur rings can then be characterized by these operations. 

\begin{Prop}[\cite{Muzychuk94} Lemma 1.3]\label{thm:circleProduct} Suppose that $S$ is a subalgebra of $F[G]$. Then $S$ is a Schur ring if and only if $S$ is closed under $*$ and $\circ$ and $1, \overline{G}\in S$.\end{Prop}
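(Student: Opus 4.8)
The plan is to prove both implications, with the forward direction a routine verification and the reverse direction carrying the real content. For the forward direction, suppose $S$ is a Schur ring with defining partition $\{C_1,\dots,C_r\}$. Axiom (3) says exactly that $S$ is closed under multiplication, and since $S$ is a subspace containing $1 = \overline{C_1}$ by axiom (1), it is a unital subalgebra; summing over the partition gives $\overline{G} = \sum_i \overline{C_i} \in S$. For closure under $*$, note that $\overline{C_i}^{\,*} = \overline{(C_i)^{-1}} = \overline{C_j} \in S$ by axiom (2), and extend linearly. For closure under $\circ$, observe that because the $C_i$ are pairwise disjoint the class sums are orthogonal idempotents under the Hadamard product, $\overline{C_i} \circ \overline{C_j} = \delta_{ij}\,\overline{C_i}$; hence the Hadamard product of any two elements of $S = \Span\{\overline{C_i}\}$ lies again in $S$.

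For the reverse direction, the heart of the matter is to manufacture a partition of $G$ out of the abstract hypotheses. I would define an equivalence relation on $G$ by declaring $g \sim h$ whenever $\alpha_g = \alpha_h$ for every $\alpha \in S$, and let $C_1, \dots, C_r$ be its classes. By construction every element of $S$ is constant on each class, so $S \subseteq \Span\{\overline{C_1},\dots,\overline{C_r}\}$. The key step is the reverse inclusion, i.e. that each $\overline{C_i}$ itself lies in $S$. Here I would use closure under $\circ$: choosing one representative $g_i$ from each class, the coefficient-evaluation map $S \to F^r$, $\alpha \mapsto (\alpha_{g_1},\dots,\alpha_{g_r})$, is an injective homomorphism for the Hadamard product whose image is a unital subalgebra of $F^r$ (unital since $\overline{G}\in S$ maps to $(1,\dots,1)$) that separates the coordinates, by the definition of $\sim$. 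A standard Lagrange-interpolation argument shows that a point-separating unital subalgebra of $F^r$ is all of $F^r$: for each $k$ one builds elements equal to $1$ in coordinate $k$ and $0$ in each other coordinate, then multiplies them to recover the standard idempotent $e_k$. Pulling $e_k$ back gives $\overline{C_k} \in S$, so $S = \Span\{\overline{C_1},\dots,\overline{C_r}\}$ and $\{C_i\}$ is a genuine partition.

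It then remains to verify the three Schur-ring axioms. Axiom (1) follows because $1 = \overline{\{1\}} \in S$ forces the identity of $G$ into a singleton class, since its coefficient in $1$ differs from that of every other element. Axiom (3) is immediate since $S$ is a subalgebra, so $\overline{C_i}\cdot \overline{C_j} \in S = \Span\{\overline{C_k}\}$. The one place demanding care is axiom (2): I would deduce from closure under $*$ that $\sim$ is invariant under inversion — for $\alpha \in S$ the coefficient of $\alpha^*$ at $g$ equals $\alpha_{g^{-1}}$, so $\alpha^*\in S$ being constant on classes means $g \sim h \Rightarrow g^{-1}\sim h^{-1}$ — whence inversion permutes the classes and each $(C_i)^{-1}$ is a single class $C_j$. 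I expect this reconstruction of the partition, and especially the upgrade from $S \subseteq \Span\{\overline{C_i}\}$ to the actual membership $\overline{C_i} \in S$, to be the main obstacle; the interpolation argument is precisely what turns ``block-constant'' membership into membership of the genuine indicator sums.
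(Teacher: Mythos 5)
Your proof is correct, but there is nothing in the paper to compare it against: the paper states this proposition as a quoted result, attributed to Muzychuk's Lemma 1.3, and gives no proof of its own. Your argument is essentially the standard one for this fact. Both directions check out: the forward direction is the routine verification you describe, and in the reverse direction the key points all hold --- the coefficient-equivalence relation makes every element of $S$ block-constant, the evaluation map $\alpha \mapsto (\alpha_{g_1},\dots,\alpha_{g_r})$ is an injective unital homomorphism of Hadamard algebras into $F^r$, and your Lagrange-interpolation step (for $i \neq j$ pick $a$ with $a_i \neq a_j$, form $(a - a_j 1)/(a_i - a_j)$, and multiply over $j \neq i$ to get the coordinate idempotent $e_i$) is valid over an arbitrary field since $a_i - a_j$ is invertible whenever $a_i \neq a_j$. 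Pulling back $e_k$ along the injective evaluation map does give exactly $\overline{C_k}$, and your verifications of axioms (1)--(3), including the inversion argument $(\alpha^*)_g = \alpha_{g^{-1}}$ showing that $*$-closure makes inversion permute the classes, are sound. In short, you have supplied a complete, self-contained proof of a statement the paper delegates to a citation.
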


\begin{Def}\label{def:latticeSchur} Let $G$ be a finite group and $\L$ a sublattice of normal subgroups of $G$. Then we define \[S(\L) = \Span_F\{\overline{H} \mid H\in\L\}.\] Since $\overline{H}\circ\overline{K} = \overline{H\cap K}$ and $\overline{H}\cdot\overline{K} = |H\cap K|\overline{HK}$ for $H,K\in \L$, $S(\L)$ is a Schur ring, by \propref{thm:circleProduct}.  For this reason, $S(\L)$ will be called a \emph{lattice Schur ring}.
\end{Def}

For any finite group $G$, the trivial Schur ring is a lattice Schur ring, corresponding to the lattice $\{1, G\}$. It was observed by Muzychuk \cite{Muzychuk93} that for cyclic groups the lattice Schur rings correspond exactly with the rational Schur rings, those Schur rings which are fixed under all group automorphisms. 

It turns out that lattice Schur rings provide another way to construct Schur rings beyond the three methods used in the Leung and Man classification theorem. For example, let $G = Z_5\times Z_5 = \langle a, b\rangle$, let $\L = \{1, \langle a\rangle, \langle b\rangle, \langle ab\rangle, G\}$, and let $S = S(\L)$. Let $C = G \setminus (\langle a\rangle \cup \langle b\rangle \cup \langle ab\rangle)$, so that $|C| = 12$. Hence, $C$ is one of the $S$-classes. Since $C \neq G\setminus 1$, $S$ is not trivial. Likewise, $S$ cannot be a dot product of Schur rings since $C$ is not a product of two $S$-classes contained in proper subgroups of $G$. Also, $S$ cannot be a wedge product since $C$ is not a union of cosets for any nontrivial subgroup. If $S$ is an orbit Schur ring, it is generated by automorphisms such that $\langle a\rangle$, $\langle b\rangle,$ and $\langle ab\rangle$ are invariant subgroups. But the only automorphism subgroups with this property are cyclic and are generated by the identity map, inversion map, or by the squaring map. The partitions of $G$ corresponding to these automorphism groups are distinct from $S$, which implies that $S$ is not an orbit Schur ring. This example then shows that the Leung-Man classification theorem for cyclic groups cannot be extended to arbitrary abelian groups.

The following three properties of Schur rings are due to Wielandt \cite{Wielandt64}.

\begin{Prop}[\cite{Wielandt64} p. 56]\label{lem:sameCoeff} Let $G$ be a finite group and let $S$ be a Schur ring over $F[G]$. Let $\alpha\in S$ such that $\alpha = \sum_{g\in G} \alpha_gg$. Then $\{g\in G\mid \alpha_g = c\}$ is an $S$-set for each $c\in F$. 
\end{Prop}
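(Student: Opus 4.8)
The plan is to prove Proposition \ref{lem:sameCoeff}: for $\alpha = \sum_{g \in G} \alpha_g g \in S$, each fiber $\{g \in G \mid \alpha_g = c\}$ is an $S$-set. First I would reduce to understanding $\alpha$ in terms of the $S$-class structure. Since $\alpha \in S$ and the $\overline{C_i}$ span $S$, we may write $\alpha = \sum_{i=1}^r \mu_i \overline{C_i}$ for scalars $\mu_i \in F$. Because the $C_i$ partition $G$, the coefficient $\alpha_g$ equals $\mu_i$ for every $g$ in the class $C_i$. Thus the coefficient function $g \mapsto \alpha_g$ is constant on each $S$-class, and the fiber $\{g \mid \alpha_g = c\}$ is exactly the union of those $S$-classes $C_i$ for which $\mu_i = c$. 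So the statement reduces to showing that any union of $S$-classes is an $S$-set, which is immediate since $\sum_{\{i : \mu_i = c\}} \overline{C_i} = \overline{\{g \mid \alpha_g = c\}}$ lies in $S$ as an $F$-linear combination of the spanning elements $\overline{C_i}$.

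This argument, while essentially complete, relies on a subtle point I would want to verify carefully rather than gloss over: that the coefficient $\alpha_g$ is genuinely constant on each $S$-class, which hinges on the classes $C_1, \ldots, C_r$ being a \emph{partition} of $G$ into disjoint sets. The expansion $\overline{C_i} = \sum_{g \in C_i} g$ has all coefficients equal to $1$, and disjointness guarantees no cancellation or overlap when forming $\sum_i \mu_i \overline{C_i}$, so reading off $\alpha_g = \mu_i$ for $g \in C_i$ is unambiguous. I would state this cleanly as the crux of the proof.

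The main obstacle, such as it is, is conceptual rather than technical: one must recognize that the content of the proposition is entirely captured by the fact that $S$ is spanned by the \emph{indicator-type} elements $\overline{C_i}$ of a set partition, so membership in $S$ forces the coefficient vector to be constant on blocks. No use of the multiplicative axioms (closure under $\cdot$, the $*$-involution) is needed here; only the linear/partition structure of the Schur ring matters. I would therefore keep the proof short, emphasizing that the fibers of the coefficient function refine into unions of $S$-classes and that $S$, being an $F$-span of class sums, is closed under taking such unions.

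Alternatively, one could phrase this via the Hadamard product characterization of \propref{thm:circleProduct}: the fiber over $c$ can be extracted by an appropriate Hadamard-type operation, but this is heavier machinery than the situation demands and I would avoid it in favor of the direct partition argument above.
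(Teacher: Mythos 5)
Your proof is correct: since $S$ is spanned by the class sums $\overline{C_i}$ of a partition of $G$, the coefficient function of any $\alpha\in S$ is constant on $S$-classes, so each fiber $\{g\in G\mid \alpha_g=c\}$ is a union of $S$-classes and its sum $\sum_{\{i:\mu_i=c\}}\overline{C_i}$ lies in $S$ (the empty fiber giving $0\in S$). The paper itself offers no proof, citing Wielandt instead, and your argument is precisely the standard one, correctly identifying that only the partition/spanning structure is needed and not the multiplicative axioms.
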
  

\begin{Prop}[\cite{Wielandt64} p. 58]\label{prop:Stab} Let   $S$ be a Schur ring over $G$. Let $\alpha\in S$ and let $\Stab(\alpha) = \{g\in G\mid \alpha g = \alpha\}$. Then $\Stab(\alpha)$ is an $S$-subgroup of $G$. \end{Prop}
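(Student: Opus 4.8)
The statement has two parts: that $\Stab(\alpha)$ is a subgroup of $G$ and that it is an $S$-set. The first is immediate, since right multiplication by a fixed $g$ is an invertible linear map on $\FG$, so $\{g\in G\mid \alpha g = \alpha\}$ is closed under products and inverses and contains $1$. The plan for the second part is to reduce to the case where $\alpha$ is the indicator of a single $S$-set, and then to recover the stabilizer as a level set of a suitable product in $S$.

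\emph{First reduction.} Write $\alpha = \sum_{g\in G}\alpha_g g$. By \propref{lem:sameCoeff}, each level set $A_c = \{g\in G\mid \alpha_g = c\}$, $c\in F$, is an $S$-set, and there are only finitely many distinct values $c$. Since the coefficient of $k$ in $\alpha g$ is $\alpha_{kg^{-1}}$, we have $\alpha g = \alpha$ if and only if $\alpha_{kg^{-1}} = \alpha_k$ for all $k$; because the $A_c$ partition $G$, this holds if and only if $A_c g = A_c$ for every $c$. Hence $\Stab(\alpha) = \bigcap_c \Stab(\overline{A_c})$. Each $\Stab(\overline{A_c})$ is a subgroup, and by \propref{thm:circleProduct} the ring $S$ is closed under the Hadamard product $\circ$; since $\overline{H}\circ\overline{K} = \overline{H\cap K}$, a finite intersection of $S$-subgroups is again an $S$-subgroup. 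It therefore suffices to prove the proposition when $\alpha = \overline{A}$ for a single $S$-set $A$.

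\emph{Core step.} For an $S$-set $A$, consider $\overline{A}^{*}\,\overline{A} = \overline{A^{-1}}\,\overline{A}$, which lies in $S$ because $S$ is closed under $*$ and under multiplication. Its coefficient at $g$ counts the pairs $(a,b)\in A\times A$ with $a^{-1}b = g$, i.e.\ with $b = ag$, and hence equals $|A\cap Ag^{-1}|$. This count is at most $|A|$, with equality exactly when $A\subseteq Ag^{-1}$, i.e.\ when $Ag = A$, that is, exactly when $g\in\Stab(\overline{A})$. Applying \propref{lem:sameCoeff} to the element $\overline{A}^{*}\,\overline{A}\in S$ and the value $|A|$, the set on which this maximal coefficient is attained is an $S$-set; as just noted this set is precisely $\Stab(\overline{A})$, which completes the argument.

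I expect the core step to be the main obstacle, specifically the identification that the value $|A|$ is attained only on $\Stab(\overline{A})$. As genuine nonnegative-integer counts this is clear, since $|A\cap Ag^{-1}| = |A|$ forces $Ag = A$; in characteristic zero these counts are read directly in $F$ and the argument is immediate. The delicate case is positive characteristic, where one must rule out that a strictly smaller count becomes congruent to $|A|$ modulo $\Char F$; this is the single place where one must argue with the honest integer structure constants of $\overline{A}^{*}\,\overline{A}$ (which are bounded by $|A|$) and with the combinatorial, field-independent meaning of an $S$-set, rather than merely with their images in $F$.
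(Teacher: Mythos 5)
Your subgroup argument, the reduction to a single $S$-set via \propref{lem:sameCoeff} and Hadamard closure, and the computation of the coefficients of $\overline{A}^{*}\,\overline{A}$ are all correct, and in characteristic zero the proof is complete. The paper itself gives no proof of this proposition (it is cited to Wielandt), and your argument is the classical one: recover $\Stab(\overline{A})$ as the level set of the maximal coefficient $|A|$ of $\overline{A}^{*}\,\overline{A}$ and apply the coefficient principle. Since every application of \propref{prop:Stab} in this paper is over $\Q$, your proof covers everything the paper actually needs.

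However, the positive-characteristic issue you flag at the end is a genuine gap relative to the statement as given, not a delicacy that can be waved at, since the paper's standing assumption allows any field $F$ with $\Char F \nmid |G|$. Concretely, take $F = \mathbb{F}_2$, $G = Z_5 = \langle z\rangle$, $S = F[G]$ (so $\Char F \nmid |G|$), and $A = \{1, z, z^2\}$. Over $\Z$ one has
\[\overline{A}^{*}\,\overline{A} = 3 + 2z + z^{2} + z^{3} + 2z^{4},\]
so in $F[G]$ this element equals $1 + z^{2} + z^{3}$, and the set where its coefficient equals $|A|\cdot 1_F = 1$ is $\{1, z^{2}, z^{3}\}$, whereas $\Stab(\overline{A}) = \{1\}$. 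Thus the set your core step exhibits as an $S$-set is not the stabilizer (here $\Stab(\overline{A})$ happens to be an $S$-set anyway because $S$ is the whole group algebra, but the argument no longer identifies it). Moreover, your proposed repair --- arguing ``with the honest integer structure constants'' --- is not available from the hypotheses: \propref{lem:sameCoeff} applies only to the $F$-coefficients of elements of $S$, and in characteristic $p$ the integer counts $|A\cap Ag^{-1}|$ are not the coefficients of any element of $S$; the Schur-ring axioms over $F$ force only their reductions mod $p$ to be constant on $S$-classes, so there is no integer-level coefficient principle to invoke. To prove the proposition in the stated generality you would need either a genuinely different argument in positive characteristic, or a proof that the partition underlying a Schur ring over such $F$ is also a Schur ring over $\Z$ (which is not established here); as written, your proof establishes the result only for $\Char F = 0$.
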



\begin{Prop}[\cite{Wielandt64} p. 58]\label{prop:Ssubgroup} Let $S$ be a Schur ring over $G$ and let $C$ be an $S$-set. Then $\langle C\rangle$ is an $S$-subgroup.
\end{Prop}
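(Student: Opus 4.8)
The plan is to show that $\langle C\rangle$ is an $S$-set; since it is visibly a subgroup, this immediately gives that it is an $S$-subgroup. Recall that the $S$-sets are exactly the unions of $S$-classes, so this family is closed under union, intersection, complementation, and (by the second Schur-ring axiom) inversion. It therefore suffices to exhibit $\langle C\rangle$ as a union of $S$-classes.

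First I would replace $C$ by a more convenient generating $S$-set. Since $S$ is closed under $*$ by \propref{thm:circleProduct}, we have $(\overline{C})^{*} = \overline{C^{-1}}\in S$, so $C^{-1}$ is an $S$-set; and $\{1\} = C_1$ is an $S$-class. Hence $T := \{1\}\cup C\cup C^{-1}$, being a union of $S$-sets, is again an $S$-set, and it satisfies $1\in T$, $T = T^{-1}$, and $\langle T\rangle = \langle C\rangle$.

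Next I would generate the subgroup multiplicatively inside $S$. Because $S$ is a ring, $\overline{T}^{\,n}\in S$ for every $n$, and the support of any element of $S$ is a union of $S$-classes (it is an $F$-linear combination of the $\overline{C_i}$), hence an $S$-set. The coefficient of $g$ in $\overline{T}^{\,n}$ counts the ways of writing $g$ as a product of $n$ elements of $T$, so the support of $\overline{T}^{\,n}$ is the set-power $T^{n}$. Since $1\in T = T^{-1}$, we obtain an increasing chain $T\subseteq T^{2}\subseteq\cdots$ which, $G$ being finite, stabilizes at $T^{m} = \langle T\rangle = \langle C\rangle$ for some $m$. Thus $\langle C\rangle$ is the support of $\overline{T}^{\,m}\in S$, hence an $S$-set, which completes the argument.

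The step requiring care---and which I expect to be the main obstacle---is the identification of the support of $\overline{T}^{\,n}$ with the honest set-power $T^{n}$. This is immediate when the coefficients are nonnegative integers, but in positive characteristic a product that genuinely occurs could be counted a multiple-of-$\operatorname{char}(F)$ number of times and so disappear from the support. The clean way around this is to note that membership in an $S$-class, and hence the property of being an $S$-set, is intrinsic to the defining partition of $G$ rather than to the field $F$: the set-theoretic facts above---that $T^{n}$ is a union of $S$-classes and that these exhaust $\langle C\rangle$---are statements about the nonnegative integer structure constants of the partition and may be verified over $\Z$, where no cancellation occurs. I would therefore perform the support computation with these integral counts and then transfer the conclusion back into $S$.
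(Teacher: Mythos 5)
Your main line of argument --- replacing $C$ by $T=\{1\}\cup C\cup C^{-1}$, taking powers $\overline{T}^{\,n}$ inside $S$, identifying the support with the set-power $T^{n}$, and letting the chain stabilize at $\langle C\rangle$ --- is precisely the classical Wielandt argument that the paper's citation points to (the paper itself gives no proof). Over a field of characteristic zero this is complete and correct: there equality of coefficients in $F$ does pin down the integer counts, the $\Z$-support and the $F$-support of $\overline{T}^{\,n}$ coincide, and every step goes through. Since every application of this proposition in the paper is over $\Q$, your proof covers everything the paper actually needs.

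However, the proposition is asserted under the paper's standing hypothesis that the characteristic of $F$ merely does not divide $|G|$, and your repair for positive characteristic --- the step you yourself flagged --- is circular. You appeal to ``the nonnegative integer structure constants of the partition,'' but in characteristic $p$ the partition need not have integer structure constants at all: the axiom $\overline{C_i}\cdot\overline{C_j}=\sum_k\lambda_{ijk}\overline{C_k}$ is an identity in $F[G]$, so it says only that the representation counts $n_{ij}(g)=\#\{(x,y)\in C_i\times C_j : xy=g\}$ are constant \emph{modulo} $p$ on each class, not equal as integers. Consequently nothing licenses the claim that $T^{n}=\mathrm{supp}_{\Z}(\overline{T}^{\,n})$ is a union of $S$-classes: it is consistent with all the hypotheses that a single class contain $g$ with $n(g)=p$ and $g'$ with $n(g')=0$, so that both coefficients vanish in $F$ while $T^{n}$ contains $g$ but not $g'$. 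Your transfer to $\Z$ tacitly assumes the partition is a Schur ring over $\Z$ (equivalently over $\Q$), which is exactly what is not given in characteristic $p$. Nor is the difficulty hypothetical: if $G$ is abelian and $F$ has characteristic $p$, then by Frobenius $\overline{T}^{\,p^{k}}=\overline{\{t^{p^{k}}\mid t\in T\}}$ in $F[G]$, so the $F$-supports of the powers of $\overline{T}$ genuinely fail to grow toward $\langle T\rangle$ along $p$-power exponents. A correct positive-characteristic argument would have to do something more --- for instance, take the union of the $F$-supports of $\overline{T}^{\,n}$ over \emph{all} $n$ (each one is an $S$-set, and unions of $S$-sets are $S$-sets) and then prove that every $g\in\langle C\rangle$ admits, for some $n$, a number of length-$n$ factorizations over $T$ that is prime to $p$ --- and your proposal supplies no such argument.
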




\begin{Def}\label{def:cayley} Let $G$ and $H$ be groups and let $A$ and $B$ be subalgebras of $F[G]$ and $F[H]$, respectively. Let $f : A\to B$ be an $F$-algebra homomorphism. If $f$ is the restriction to $A$ of an induced group homomorphism $\varphi : G \to H$, then we say that $f$ is a \emph{Cayley homomorphism}.
\end{Def}

Let $\varphi : G \to H$ be a group homomorphism. Let $\varphi$ also denote its linear extension $\varphi : \FG \to \FH$. Let $g \in G$. Then \[\varphi(g^*) = \varphi(g^{-1}) = \varphi(g)^{-1} = \varphi(g)^*.\] By linearity, $\varphi(\alpha^*) = \varphi(\alpha)^*$ for all $\alpha\in \FG$. In particular, Cayley maps always preserve the involution structure of $\FG$. 

\begin{Lem}[\cite{Muzychuk94} Proposition 1.5]\label{lem:HadamardMap} Let $\varphi : G \to H$ be a group homomorphism with $\ker \varphi = K$. Let $\alpha, \beta \in \FG$. Then 
\[\varphi(\alpha) \circ \varphi(\beta) = \frac{1}{|K|}\varphi( (\alpha\cdot\overline{K}) \circ (\beta\cdot\overline{K})).\]
\end{Lem}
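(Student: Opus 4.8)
The plan is to exploit the fact that both sides of the claimed identity are $F$-bilinear in the pair $(\alpha,\beta)$. On the left, $\varphi$ is linear and the Hadamard product $\circ$ is bilinear; on the right, right multiplication by the fixed element $\overline{K}$ is linear, $\circ$ is bilinear, and $\varphi$ is linear, so the composite is again bilinear in $(\alpha,\beta)$. Consequently it suffices to verify the identity when $\alpha = g$ and $\beta = g'$ range over the group-element basis of $\FG$, which reduces the whole statement to an elementary coset computation.

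For such basis elements I would first record the two ingredients. Since $K = \ker\varphi$, right multiplication by $\overline{K}$ spreads a group element over its coset, $g\cdot\overline{K} = \overline{gK}$, and applying $\varphi$ to a coset sum collapses it: $\varphi(\overline{gK}) = \sum_{k\in K}\varphi(gk) = |K|\,\varphi(g)$, because $\varphi$ is constant equal to $\varphi(g)$ on $gK$. On the left side, $\varphi(g)\circ\varphi(g')$ equals $\varphi(g)$ when $\varphi(g) = \varphi(g')$ and is $0$ otherwise, that is, it equals $\delta_{\varphi(g),\varphi(g')}\,\varphi(g)$.

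For the right side I would compute $(g\cdot\overline{K})\circ(g'\cdot\overline{K}) = \overline{gK}\circ\overline{g'K} = \overline{gK\cap g'K}$, using that the Hadamard product of two sums with $0/1$ coefficients is supported on the intersection of their supports. The key structural observation is the coset dichotomy: the cosets $gK$ and $g'K$ are either equal or disjoint, and they coincide precisely when $\varphi(g) = \varphi(g')$. Hence $\overline{gK}\circ\overline{g'K}$ equals $\overline{gK}$ when $\varphi(g) = \varphi(g')$ and $0$ otherwise. Applying $\varphi$ and dividing by $|K|$ then yields $\delta_{\varphi(g),\varphi(g')}\,\varphi(g)$, matching the left side, and bilinearity upgrades this equality to arbitrary $\alpha,\beta\in\FG$.

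I expect the only real content to be the coset bookkeeping in the middle step, specifically the identifications $g\cdot\overline{K} = \overline{gK}$ and $\varphi^{-1}(\varphi(g)) = gK$, both of which use that $K$ is precisely the kernel, together with the observation that $\varphi$ introduces exactly the compensating factor of $|K|$ when it folds each coset back to a single point. Everything else is formal once bilinearity has cut the problem down to basis elements.
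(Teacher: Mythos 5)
Your proof is correct, and it is worth noting that it takes a different route from the paper: the paper does not actually prove this lemma at all, but cites Muzychuk's proof (stated there for abelian $G$) and adds only the remark that, since $K\trianglelefteq G$ makes $\overline{K}$ central in $\FG$, that proof remains valid without the abelian hypothesis. Your argument, by contrast, is self-contained: bilinearity of both sides in $(\alpha,\beta)$ reduces the identity to basis elements $g,g'$, and then the coset dichotomy ($gK$ and $g'K$ equal or disjoint, equal precisely when $\varphi(g)=\varphi(g')$) together with $\varphi(\overline{gK})=|K|\varphi(g)$ closes the computation. Notably, your proof never invokes centrality of $\overline{K}$ or any commutativity whatsoever --- the only property of $K$ you use is that it is exactly the kernel, so that $\varphi$ is constant on left cosets $gK$ and distinct cosets have distinct images. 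This makes your argument strictly more direct than the paper's treatment: it establishes the general (non-abelian) statement in one pass, rather than proving an abelian case and then arguing that the proof survives the removal of that hypothesis. The only point worth being explicit about, which you implicitly rely on, is that $|K|$ is invertible in $F$; this is guaranteed by the paper's standing assumption that $\Char F$ does not divide $|G|$.
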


In Muzychuk's original proof, he proves \lemref{lem:HadamardMap} under the assumption that $G$ is an abelian group. Since $K\trianglelefteq G$, $\overline{K}$ is central in $\FG$, and Muzychuk's proof remains valid without the abelian assumption.


\begin{Cor}\label{cor:HadamardMap} Let $\varphi : G \to H$ be a group homomorphism with $\ker \varphi = K$. Let $S$ be a Schur ring over $\FG$ such that $\overline{K} \in S$. Then $\varphi(S)$ is a Schur ring over a subgroup of $H$. Furthermore, if $\varphi$ is surjective, then $\varphi(S)$ is a Schur ring over $H$.
\end{Cor}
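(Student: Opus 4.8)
The plan is to apply the characterization of Schur rings in \propref{thm:circleProduct}: a subalgebra $T$ of $F[H']$ is a Schur ring precisely when $1, \overline{H'} \in T$ and $T$ is closed under both the involution $*$ and the Hadamard product $\circ$. Here I set $H' = \varphi(G)$, a subgroup of $H$, and verify these conditions for $T = \varphi(S)$. Since $\varphi$ is an $F$-algebra homomorphism, $\varphi(S)$ is automatically a subalgebra of $F[H]$, and because $\varphi(g) \in H'$ for every $g \in G$, the image lands in $F[H']$; thus $\varphi(S)$ is a subalgebra of $F[H']$, and it suffices to check the Schur-ring axioms there.

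The two easy conditions come first. The identity condition holds because $1 \in S$ gives $\varphi(1) = 1 \in \varphi(S)$. For $\overline{H'}$, I use that $\overline{G} \in S$ and that the fiber of $\varphi$ over each element of $H'$ is a coset of $K$, hence of size $|K|$; this yields $\varphi(\overline{G}) = |K|\,\overline{H'}$. Since $|K|$ divides $|G|$, it is invertible in $F$ (the characteristic of $F$ does not divide $|G|$), so $\overline{H'} = |K|^{-1}\varphi(\overline{G}) \in \varphi(S)$. Closure under $*$ is immediate from the identity $\varphi(\alpha^*) = \varphi(\alpha)^*$ established just before \lemref{lem:HadamardMap}: given $\varphi(\alpha) \in \varphi(S)$ with $\alpha \in S$, closure of $S$ under $*$ gives $\alpha^* \in S$, whence $\varphi(\alpha)^* = \varphi(\alpha^*) \in \varphi(S)$.

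The one genuinely substantive step is closure under the Hadamard product, and this is exactly where the hypothesis $\overline{K} \in S$ is used. Given $\alpha, \beta \in S$, \lemref{lem:HadamardMap} gives
\[
\varphi(\alpha) \circ \varphi(\beta) = \frac{1}{|K|}\varphi\bigl((\alpha\cdot\overline{K})\circ(\beta\cdot\overline{K})\bigr).
\]
Because $\overline{K} \in S$ and $S$ is closed under multiplication, both $\alpha\cdot\overline{K}$ and $\beta\cdot\overline{K}$ lie in $S$; since $S$ is a Schur ring it is closed under $\circ$, so $(\alpha\cdot\overline{K})\circ(\beta\cdot\overline{K}) \in S$. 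Applying $\varphi$ and scaling by the invertible scalar $1/|K|$ keeps the result in $\varphi(S)$, so $\varphi(\alpha)\circ\varphi(\beta) \in \varphi(S)$. With all four conditions verified, \propref{thm:circleProduct} shows $\varphi(S)$ is a Schur ring over $H' = \varphi(G)$; and if $\varphi$ is surjective then $H' = H$, giving the last assertion. I expect no real obstacle beyond bookkeeping: the only points requiring care are ensuring $|K|$ is invertible in $F$ and recognizing that the hypothesis $\overline{K}\in S$ is precisely what makes the Hadamard-closure argument go through.
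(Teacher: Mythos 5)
Your proposal is correct and follows essentially the same route as the paper: verify the hypotheses of \propref{thm:circleProduct} for $\varphi(S) \subseteq F[\varphi(G)]$, using \lemref{lem:HadamardMap} together with $\overline{K}\in S$ to get Hadamard closure. The paper's proof is just a terser version of exactly this argument, so there is nothing to add.
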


\begin{proof}
$\varphi(S)$ is always a subalgebra of $\FH$ closed under $*$ and contains $1, \overline{\varphi(G)}$. By \lemref{lem:HadamardMap}, $\varphi(S)$ is closed under $\circ$, and hence $\varphi(S)$ is a Schur ring over $\varphi(G)$ by \propref{thm:circleProduct}.
\end{proof}

\corref{cor:HadamardMap} was originally proved by Leung and Ma \cite{Leung90} using a different proof.

As stated earlier, for any lattice $\L$ of normal subgroups of a finite group $G$, $S(\L)$ is a Schur ring over $G$ spanned by the elements of $\L$. Let $H\trianglelefteq G$ and let $\varphi : G \to G/H$ be the quotient map. Suppose that $\L$ is a distributive lattice. Then we claim that $\varphi(S(\L))$ is a Schur ring over $G/H$, even if $H\notin \L$. As in the proof of \corref{cor:HadamardMap}, it suffices to show that $\varphi(S(\L))$ is closed under $\circ$. If $K_1, K_2\in \L$, then 
\begin{eqnarray*}
\overline{K_1H} \circ \overline{K_2H} &=& \overline{K_1H\cap K_2H} = \overline{(K_1\cap K_2)H} = \dfrac{1}{|(K_1\cap K_2)\cap H|}\overline{K_1\cap K_2}\cdot \overline{H}\\
& =& \dfrac{1}{|K_1\cap K_2\cap H|}(\overline{K_1}\circ\overline{K_2})\cdot \overline{H},
\end{eqnarray*}
 where the second equality holds by the distributivity of the lattice. Then 
\begin{eqnarray*}
\varphi(\overline{K_1}) \circ \varphi(\overline{K_2}) &=& \frac{1}{|H|}\varphi( (\overline{K_1}\cdot\overline{H}) \circ (\overline{K_2}\cdot\overline{H})), \quad \text{by \lemref{lem:HadamardMap}},\\
&=& \dfrac{|K_1\cap H||K_2\cap H|}{|H|}\varphi(\overline{K_1H}\circ \overline{K_2H})\\
& =& \dfrac{|K_1\cap H||K_2\cap H|}{|H||K_1\cap K_2\cap H|}\varphi((\overline{K_1}\circ \overline{K_2})\cdot\overline{H})\\
&=& |(K_1\cap H)(K_2\cap H)|\varphi(\overline{K_1}\circ\overline{K_2}) \in \varphi(S(\L)).
\end{eqnarray*} Therefore, $\varphi(S(\L))$ is closed under $\circ$, which proves the claim.  This fact is reported in the next proposition.

\begin{Prop}\label{prop:latticeQuotient} Let $G$ be a finite group and let $\L$ be a distributive lattice of normal subgroups of $G$. Let $\varphi : G\to H$ be a group homomorphism. Then $\varphi(S(\L))$ is a lattice Schur ring over a subgroup of $H$. 
\end{Prop}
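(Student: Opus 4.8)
The plan is to reduce to the surjective case and then identify $\varphi(S(\L))$ explicitly as the span of a family of subgroup sums. Replacing $H$ by $\varphi(G)$ costs nothing, so set $K = \ker\varphi$ and regard $\varphi$ as the quotient of $G$ onto $\varphi(G)\cong G/K$. For each $M\in\L$ the restriction $\varphi|_M$ is a surjection onto $\varphi(M)$ with kernel $M\cap K$, so every element of $\varphi(M)$ has exactly $|M\cap K|$ preimages in $M$, whence $\varphi(\overline{M}) = |M\cap K|\,\overline{\varphi(M)}$. Consequently $\varphi(S(\L)) = \Span_F\{\overline{\varphi(M)}\mid M\in\L\}$, a span of sums over normal subgroups of $\varphi(G)$. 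By \defref{def:latticeSchur} it then suffices to prove that $\L' = \{\varphi(M)\mid M\in\L\}$ is a sublattice of the lattice of normal subgroups of $\varphi(G)$, for then $\varphi(S(\L)) = S(\L')$ is a lattice Schur ring over $\varphi(G)\le H$.

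Next I would check that $\L'$ is closed under the two lattice operations. Closure under joins is immediate, since $\varphi(M_1)\varphi(M_2) = \varphi(M_1 M_2) = \varphi(M_1\vee M_2)$ and $M_1\vee M_2\in\L$. The real content is closure under meets, i.e. that $\varphi(M_1)\cap\varphi(M_2)\in\L'$. The natural target is the stronger assertion that $\varphi$ preserves meets on $\L$, namely $\varphi(M_1)\cap\varphi(M_2) = \varphi(M_1\cap M_2)$; as $M_1\cap M_2\in\L$, this would close the argument. Through the correspondence theorem for $\varphi$ this meet identity is equivalent to the purely group-theoretic statement $M_1 K\cap M_2 K = (M_1\cap M_2)K$, the inclusion $\supseteq$ being automatic. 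The same identity is what one meets on the Hadamard-product route: applying \lemref{lem:HadamardMap} to $\overline{\varphi(M_1)}\circ\overline{\varphi(M_2)}$ and simplifying leads back to $M_1 K\cap M_2 K$, so closure under $\circ$ (hence a Schur ring by \propref{thm:circleProduct}) hinges on exactly the same point.

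I expect this last identity to be the genuine obstacle, and it is precisely where the distributivity hypothesis must be spent. Written in the lattice of normal subgroups of $G$, it reads $(M_1\vee K)\wedge(M_2\vee K) = (M_1\wedge M_2)\vee K$, an instance of the distributive law for the triple $M_1,M_2,K$. The ambient lattice of normal subgroups is in general only modular, so this does not come for free; and the difficulty is sharpened by the fact that $K=\ker\varphi$ need not lie in $\L$, whereas distributivity of $\L$ controls only triples drawn from $\L$. The crux is therefore to show that the distributive identity survives adjoining $K$ — for example by examining the sublattice generated by $\L\cup\{K\}$ and arguing that it remains distributive, or by a direct element-chase exploiting that $M_1,M_2$ come from a distributive family. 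This is the step I would scrutinize most carefully, since in a merely modular ambient lattice enlarging a distributive sublattice by a single element can destroy distributivity; should distributivity of $\L$ fail to suffice, one would need to isolate the additional interaction between $K$ and $\L$ that the conclusion implicitly requires.
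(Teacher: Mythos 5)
You have followed essentially the paper's own route (reduce to the surjective case, identify $\varphi(S(\L))$ as $\Span_F\{\overline{\varphi(M)}\mid M\in\L\}$, and reduce everything to closure under $\circ$ via \lemref{lem:HadamardMap}), and the step you single out as the crux is exactly the step on which the paper's proof turns: the paper asserts $\overline{K_1H\cap K_2H}=\overline{(K_1\cap K_2)H}$ ``by the distributivity of the lattice.'' Since $H=\ker\varphi$ need not lie in $\L$ (the paper even emphasizes this), distributivity of $\L$ says nothing about the triple $(K_1,K_2,H)$, so the paper's proof contains precisely the gap you describe. You were right not to wave this through.

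Moreover, your suspicion that distributivity of $\L$ alone cannot suffice is correct: the proposition as stated is false. Take $G=Z_2^4=\langle a_1,a_2,b_1,b_2\rangle$ and $\L=\{1,\langle a_1,a_2\rangle,\langle b_1,b_2\rangle,G\}$, a sublattice isomorphic to the Boolean lattice $2\times 2$, hence distributive; let $\varphi:G\to G/K$ be the quotient by $K=\langle a_1b_1\rangle$. Writing $\bar g=\varphi(g)$, we have $\bar b_1=\bar a_1$, and since $\langle a_1,a_2\rangle\cap K=\langle b_1,b_2\rangle\cap K=1$, one gets $\varphi(S(\L))=\Span_\Q\{1,\overline{A},\overline{B},\overline{G/K}\}$ with $A=\langle\bar a_1,\bar a_2\rangle$, $B=\langle\bar a_1,\bar b_2\rangle$, $A\cap B=\langle \bar a_1\rangle$. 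If $\overline{A}\circ\overline{B}=\overline{A\cap B}$ were equal to $c_0+c_1\overline{A}+c_2\overline{B}+c_3\overline{G/K}$, then evaluating coefficients at $\bar a_2\bar b_2$, $\bar a_2$, $\bar b_2$ forces $c_3=c_1=c_2=0$, and evaluating at $\bar a_1$ then gives $1=0$. So $\varphi(S(\L))$ is not closed under $\circ$, hence (by \propref{thm:circleProduct}, and since it contains $\overline{G/K}$ it could only be a Schur ring over $G/K$ itself) it is not a Schur ring at all. What the argument genuinely requires is your identity $(M_1\vee\ker\varphi)\wedge(M_2\vee\ker\varphi)=(M_1\wedge M_2)\vee\ker\varphi$ for $M_1,M_2\in\L$, i.e.\ distributivity of the sublattice generated by $\L\cup\{\ker\varphi\}$; this holds automatically if $\ker\varphi\in\L$, or if the full subgroup lattice of $G$ is distributive---in particular for $G$ cyclic, which is the only situation in which the paper applies \propref{prop:latticeQuotient} (namely in \thmref{thm:CayleySchur}). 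So your proof cannot be completed from the stated hypothesis, but neither can the paper's; the proposition needs the stronger hypothesis, and your final paragraph identifies exactly the correct repair.
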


Let $G$ be a finite cyclic group. Then the lattice of subgroups of $G$ is distributive, and hence any sublattice is also distributive. Thus, $\varphi(S(\L))$ is a Schur ring for any homomorphism $\varphi$ and any lattice $\L$ of subgroups of $G$. The next theorem generalizes this for any Schur ring over a cyclic group.

\begin{Thm}\label{thm:CayleySchur} Let $G$ be a finite cyclic group and $S$ be a Schur ring over $\FG$. If $\varphi : G \to L$ is a group homomorphism, then $\varphi(S)$ is a Schur ring over a subgroup of $L$.
\end{Thm}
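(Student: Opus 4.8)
The plan is to induct on $|G|$, using the Leung--Man classification (\thmref{thm:LeungMan}) to reduce to the four structural types. By \propref{thm:circleProduct} it suffices to show that $\varphi(S)$ is closed under the Hadamard product $\circ$, since $\varphi(S)$ is automatically a $*$-closed subalgebra containing $1$ and $\overline{\varphi(G)}$; and I may assume $\varphi$ is surjective, replacing $L$ by $\varphi(G)$. The first observation is that whenever $\overline{K}\in S$, with $K=\ker\varphi$, \corref{cor:HadamardMap} finishes immediately. This disposes of two cases at once: if $S$ is an orbit ring $\FG^{\H}$, then because $G$ is cyclic every subgroup, in particular $K$, is characteristic and hence $\H$-invariant, so $\overline{K}\in S$; and if $S$ is trivial (or any lattice Schur ring) the claim is \propref{prop:latticeQuotient}, since the subgroup lattice of a cyclic group is distributive.

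For a dot product $S=S_1\cdot S_2$ over $G=G_1\times G_2$ with $\gcd(|G_1|,|G_2|)=1$, I would factor $\varphi$ as $\varphi_1\times\varphi_2$ with $\varphi_i\colon G_i\to G_i/(K\cap G_i)$, so that $\varphi(S)=\varphi_1(S_1)\cdot\varphi_2(S_2)$; as each $|G_i|<|G|$, the inductive hypothesis makes each $\varphi_i(S_i)$ a Schur ring, and a dot product of Schur rings is a Schur ring. The remaining and genuinely hard case is the wedge product $S=S_1\wedge S_2$, where $1<N\le H<G$ with $N$ normal, $S_1$ a Schur ring over $H$, $S_2$ over $G/N$, and $S=S_1+\pi^{-1}(S_2)$ for $\pi\colon G\to G/N$ (I write $N$ for the normal subgroup called $K$ in the wedge construction, to avoid clashing with $\ker\varphi$). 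If $N\le K$ then $\varphi$ factors through $\pi$, so $\varphi(S)=\psi(S_2)$ for some $\psi\colon G/N\to L$, which is a Schur ring by induction since $|G/N|<|G|$. Thus I may assume $N\not\le K$, i.e. $\overline{\varphi(N)}\neq 1$.

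In this case I would check closure of $\varphi(S)$ under $\circ$ on the spanning set coming from $\varphi(S_1)$ together with $\varphi(\pi^{-1}(S_2))$. By induction $\varphi(S_1)=\varphi|_H(S_1)$ is a Schur ring over $\varphi(H)$ (as $|H|<|G|$); and writing $q\colon G/N\to G/NK$ and $\bar{q}\colon L\to L/\varphi(N)$ for the evident quotients, so that $\bar{q}\circ\varphi=q\circ\pi$, the image $q(S_2)$ is a Schur ring over $G/NK$ by induction (as $|G/N|<|G|$). A product of two elements of $\varphi(S_1)$ stays in $\varphi(S_1)$ by its own $\circ$-closure. For two elements $a,b\in\varphi(\pi^{-1}(S_2))$, I note that such elements are $\varphi(N)$-invariant and apply \lemref{lem:HadamardMap} to $\bar{q}$ to get $\bar{q}(a\circ b)=\tfrac{1}{|\varphi(N)|}\,\bar{q}(a)\circ\bar{q}(b)\in q(S_2)$; since $a\circ b$ is again a $\varphi(N)$-invariant element lying over $q(S_2)$, and $\bar{q}$ is injective on $\varphi(N)$-invariants, it lies in $\varphi(\pi^{-1}(S_2))\subseteq\varphi(S)$.

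The main obstacle is the mixed product: given $a\in\varphi(S_1)$ and $b\in\varphi(\pi^{-1}(S_2))$, one must show $a\circ b\in\varphi(S)$. Since $a$ is supported on $\varphi(H)$, this reduces to showing that the restriction of $b$ to $\varphi(H)$ already lies in $\varphi(S_1)$; equivalently, that $\varphi(S)$ is exactly the wedge product $\varphi(S_1)\wedge q(S_2)$ over $L$ with normal subgroup $\varphi(N)$ and intermediate subgroup $\varphi(H)$ (or, in the degenerate case $\varphi(H)=L$, simply that $\varphi(S)=\varphi(S_1)$). The compatibility condition needed for this, namely $\bar{q}(\varphi(S_1))=q(S_2)\cap F[\varphi(H)/\varphi(N)]$ — that passing to the quotient $q$ commutes with restriction to the subgroup $H/N$ — is where the argument is delicate, and I expect it to hinge on the distributivity of the subgroup lattice of the cyclic group $G/N$, exactly as distributivity was exploited in \propref{prop:latticeQuotient}. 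Establishing this descent of the wedge compatibility, and thereby that the mixed product $a\circ b$ returns to $\varphi(S_1)$, is the crux of the proof.
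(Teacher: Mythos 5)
Your overall scheme coincides with the paper's proof: induction on $|G|$ via \thmref{thm:LeungMan}, the trivial case via \propref{prop:latticeQuotient}, the orbit case via characteristic subgroups and \corref{cor:HadamardMap}, and the dot-product case by splitting $\varphi$ across the coprime factors. Your sub-case $N\le\ker\varphi$ of the wedge (where $\varphi$ factors through $\pi$, so $\varphi(S)=\psi(S_2)$ with $|G/N|<|G|$) is also correct, as is your treatment of products of two elements of $\varphi(\pi^{-1}(S_2))$. But the proposal is not a proof: in the remaining wedge case, closure under the mixed products $a\circ b$ with $a\in\varphi(S_1)$, $b\in\varphi(\pi^{-1}(S_2))$ is reduced to the compatibility statement $\bar{q}(\varphi(S_1)) = q(S_2)\cap F[\varphi(H)/\varphi(N)]$, and you explicitly leave this unproved, saying only that you ``expect'' it to follow from distributivity. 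That is a genuine gap, and it is not a routine verification: the inclusion $\supseteq$ asks that every element of $q(S_2)$ supported on the image of $H/N$ come from $S_2\cap F[H/N]=\pi(S_1)$, and an element of $S_2$ can have $q$-image supported on that subgroup without itself being supported on $H/N$, since coefficients may cancel along cosets of $\ker q$. None of your earlier tools can close this, because $\ker\varphi$ need not be an $S$-set, so \lemref{lem:HadamardMap} and \corref{cor:HadamardMap} give no purchase on mixed products.

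The paper finishes this case by working at the level of partitions rather than elements, proving that the image of a wedge is again a wedge. In its notation (its wedge subgroup $K$ is your $N$, and its $\ker\varphi = N$ is your $K$): taking $\varphi : G\to G/N$ to be a quotient map without loss of generality and introducing the quotient maps $\pi^* : G/N \to G/KN$ and $\varphi^* : G/K \to G/KN$, it establishes the pullback identity $\varphi(\pi^{-1}(T)) = (\pi^*)^{-1}(\varphi^*(T))$; at the level of classes this is the identity $\varphi(\pi^{-1}(C)) = (\pi^*)^{-1}(\varphi^*(C))$ for each $T$-class $C$, which follows from $\pi^*\circ\varphi = \varphi^*\circ\pi$ and surjectivity of $\varphi$, with the coefficient bookkeeping working out because $\pi^{-1}(C)$ is a union of $K$-cosets. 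Hence $\varphi(S) = \varphi(R) + (\pi^*)^{-1}(\varphi^*(T)) = \varphi(R)\wedge\varphi^*(T)$, where $\varphi(R)$ and $\varphi^*(T)$ are Schur rings by induction, and a wedge product of Schur rings is a Schur ring. This is exactly the wedge structure you conjectured for $\varphi(S)$ in your final paragraph; the pullback identity is the decisive ingredient your write-up lacks, since it identifies the pullback part of $\varphi(S)$ intrinsically and exhibits the wedge decomposition directly (the paper is admittedly brisk about re-verifying the wedge compatibility for the images, but that identity is what its conclusion rests on). To repair your argument, prove the class-level identity above and deduce the displayed equality of algebras; the restriction statement you were stuck on is then a consequence of the wedge structure of $\varphi(S)$ rather than a prerequisite for it.
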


\begin{proof}
Let $S$ be a Schur ring over $G = Z_n$. We proceed by induction on $|G|$. If $|G| = p$, a prime, then the only normal subgroups are $1$ and $G$, which are necessarily $S$-subgroups. Thus, the property holds for $|G| = p$, by \corref{cor:HadamardMap}.

Suppose now the property holds for all proper divisors of the integer $n$ and let $S$ be a Schur ring over $G=Z_n$. By \thmref{thm:LeungMan}, $S$ is a trivial, orbit, dot product, or wedge product Schur ring. If $S$ is trivial, then it is a lattice Schur ring.  So, $\varphi(S)$ is a Schur ring by \propref{prop:latticeQuotient}. If $S$ is an orbit Schur ring, then every subgroup of $G$ is an $S$-subgroup since every subgroup is characteristic. Thus, $\varphi(S)$ is a Schur ring by \corref{cor:HadamardMap}. If $S = R\cdot T$ for Schur rings $R$ and $T$ over subgroups $H$ and $K$, respectively, such that $G = H\times K$, then $\varphi(S) = \varphi(R\cdot T) = \varphi(R)\cdot\varphi(T)$. Since $\varphi(R)$ and $\varphi(T)$ are Schur rings by induction, $\varphi(S)$ is the dot product of Schur rings and hence a Schur ring itself. Lastly, let $S = R\wedge T$ for Schur rings $R$ and $T$ over normal subgroup $H$ and quotient group $G/K$, respectively. Let $\pi : G\to G/K$ be the quotient map. Then $S = R\wedge T = R + \pi^{-1}(T)$. Without the loss of generality, we may assume that $\varphi$ is the quotient map $\varphi : G \to G/N$. We likewise define $\pi^* : G/N \to G/KN$ and $\varphi^* : G/K \to G/KN$ to be quotient maps. Then it holds that $\varphi(\pi^{-1}(T)) = (\pi^*)^{-1}(\varphi^*(T))$. By induction, $\varphi(R)$ and $\varphi^*(T)$ are Schur rings. Therefore, $\varphi(S) = \varphi(R\wedge T) = \varphi(R) + \varphi(\pi^{-1}(T)) = \varphi(R) + (\pi^*)^{-1}(\varphi^*(T)) = \varphi(R) \wedge \varphi^*(T)$, which is a Schur ring.
This then proves the result for arbitrary $n$.
\end{proof}

The Cayley image of a Schur ring need not be a Schur ring. In fact, it is false even for Schur rings over abelian groups. Let $G = Z_2 \times Z_6 = \langle a, b\rangle$ and let $S = \Span_\Q\{1, b^3, b^2+b^4, b+b^5, a+ab^3, ab+ab^2, ab^4+ab^5\}$. Then $S$ is an orbit Schur ring afforded by the  subgroup generated by the automorphism $\sigma : a\mapsto ab^3, b\mapsto b^{-1}$. Let $\varphi : G \to Z_6$ be the projection homomorphism onto the subgroup $\langle b\rangle$, that is, $\pi : a\mapsto 1, b\mapsto b$. Then 
\begin{eqnarray*}
\varphi(S) &=& \Span_\Q\{1, b^3, b^2+b^4, b+b^5, 1+b^3, b+b^2, b^4+b^5\}\\
&=& \Span_\Q\{1, b^3, b^2+b^4, b+b^5, b+b^2\}.
\end{eqnarray*}  If $\varphi(S)$ were a Schur ring, then $(b+b^5)\circ (b+b^2) = b\in \varphi(S)$. Since $b\in \varphi(S)$, this implies that $\varphi(S) = \Q[Z_6]$, which is six-dimensional. But $\dim \varphi(S) \le 5$, which proves that $\varphi(S)$ is not a Schur ring. 


\section{Central Idempotents in Group Algebras}\label{sec:IdemGroup}

If the sum of a  set of idempotents is 1, we say that the set of idempotents is \emph{complete}. In particular, the set of all primitive central  idempotents is always complete in a semisimple ring. Furthermore, every central idempotent of the semisimple ring is a sum of primitive central  idempotents, and the primitive central  idempotents \emph{involved} in this sum are precisely the ones whose product with the idempotent is nonzero. 

Let $G$ be a finite group and $H\le G$.  Then for all $h\in H$, $h\overline{H} = \overline{H}h = \overline{H}$. Let $\widehat{H} = \dfrac{1}{|H|}\overline{H} \in \FG$. Then $\widehat{H}$ is an idempotent in $\FG$. If $H\trianglelefteq G$, then $\widehat{H}$ is a central idempotent. Note, $(\widehat{G})$ is a one-dimensional ideal in $\FG$, which implies that $\widehat{G}$ is always primitive in $\FG$. On the other hand, if $H\lneq G$,  then $\widehat{H}$ is not primitive in $\FG$ since $\widehat{H} = \widehat{G} + (\widehat{H}-\widehat{G})$.

Given any subgroups $H$ and $K$ of $G$, we have
\[\widehat{H}\widehat{K} = \dfrac{1}{|H||K|}\overline{H}\cdot\overline{K} = \dfrac{|H\cap K|}{|H||K|}\overline{HK} = \widehat{HK}.\] If $H$ is normal, then $HK \le G$ and $\widehat{HK}$ is an idempotent of $\FG$. If $H,K\trianglelefteq G$, then $HK$ is also normal in $G$. So, $\widehat{HK}$ is central in $\FG$.

 Let $\L$ be a \emph{semi-lattice of normal subgroups} of $G$, by which we mean $\L$ is a set of normal subgroups of $G$ which is closed under joins and contains $1$ and $G$. Thus, a lattice of normal subgroups  is a semi-lattice closed under intersections. For $H, K\in \L$, we say that $K$ \emph{covers} $H$ if $H < K$ and for all $L\in \L$ such that $H\le L\le K$, either $L=H$ or $L = K$. Let $\M(\L,H)$ denote the set of all covers of $H$ in the semi-lattice $\L$. When $\L$ is the whole lattice of normal subgroups of $G$, let $\M(G,H) = \M(\L,H)$.

For every semi-lattice of normal subgroups of $G$, there is an associated system of idempotents in $\FG$ as follows: let \[\varepsilon(\L,H) = \prod_{M\in \M(\L,H)} (\widehat{H} - \widehat{M}) \in \FG.\] Since each subgroup $M$ is normal, $\widehat{M}$ is central in $\FG$ and hence the order of the product is irrelevant and $\varepsilon(\L,H)$ is central in $\FG$.  Because $\M(\L,G) = \emptyset$,  let $\varepsilon(\L,G) = \widehat{G}$. When $\L$ is the whole semi-lattice, we let $\varepsilon(G,H) = \varepsilon(\L,H)$. This agrees with the notation introduced in \cite{Jespers03}.

\begin{Lem}\label{lem:idemidem} Let $\L$ be a semi-lattice of normal subgroups of $G$ with $H,K\in \L$. 
\begin{enumerate}
\item If $K \le H$, then $\widehat{K}\varepsilon(\L,H) = \varepsilon(\L,H).$
\item If $H < K$, then $\widehat{K}\varepsilon(\L,H) = 0$.
\item If $K \not\le H$, then $\widehat{K}\varepsilon(\L,H)=0$.
\end{enumerate}

In particular, $\varepsilon(\L,H)^2 = \varepsilon(\L,H)$ and  $\varepsilon(\L,H)\varepsilon(\L,K) = 0$ if $H\neq K$.
\end{Lem}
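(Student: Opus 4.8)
The plan is to derive all three multiplication rules directly from the two given facts: that $\widehat{A}\widehat{B} = \widehat{AB}$ for normal subgroups, and that each $\widehat{M}$ is central, so the factors of $\varepsilon(\L,H) = \prod_{M\in\M(\L,H)}(\widehat{H}-\widehat{M})$ all commute. Throughout I would use that the join of two normal subgroups is their product $HK$, and that since $\L$ is closed under joins, $HK\in\L$ whenever $H,K\in\L$. The key object in each case is a well-chosen factor $(\widehat{H}-\widehat{M})$ of the product on which $\widehat{K}$ acts simply.

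For (1), assuming $K\le H$, each cover $M\in\M(\L,H)$ satisfies $K\le H<M$, so $KH=H$ and $KM=M$, whence $\widehat{K}(\widehat{H}-\widehat{M}) = \widehat{KH}-\widehat{KM} = \widehat{H}-\widehat{M}$. Since $\widehat{K}$ is central, I would commute it to the front and absorb it into a single factor, yielding $\widehat{K}\varepsilon(\L,H)=\varepsilon(\L,H)$; the degenerate case $H=G$ (empty product) is checked separately via $\widehat{K}\widehat{G}=\widehat{KG}=\widehat{G}$. The heart of the argument is (3), from which (2) is a special case since $H<K$ forces $K\not\le H$. Assuming $K\not\le H$, I have $H<HK$ with $HK\in\L$, so because $\L$ is finite there is a cover $M\in\M(\L,H)$ with $H<M\le HK$. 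For this particular $M$ the crux is the identity $KM=KH$: indeed $K,M\le KH$ gives $KM\le KH$, while $K\le KM$ and $H\le M\le KM$ give $KH\le KM$. Consequently $\widehat{K}(\widehat{H}-\widehat{M}) = \widehat{KH}-\widehat{KM} = 0$, and since this vanishing factor appears in the commuting product, $\widehat{K}\varepsilon(\L,H)=0$. For (2), when $H<K$ one has $HK=K$, so the chosen $M$ obeys $M\le K$ and the factor reads $\widehat{K}(\widehat{H}-\widehat{M}) = \widehat{K}-\widehat{K} = 0$.

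For the ``in particular'' clauses, idempotency follows by first checking each factor is idempotent: since $H\le M$ implies $HM=M$, expanding gives $(\widehat{H}-\widehat{M})^2 = \widehat{H}-2\widehat{M}+\widehat{M} = \widehat{H}-\widehat{M}$. As the factors commute, their product $\varepsilon(\L,H)$ is a product of commuting idempotents and hence idempotent. For orthogonality with $H\neq K$, at least one of $K\not\le H$ or $H\not\le K$ must hold; assuming the former, part (1) gives $\varepsilon(\L,K)=\widehat{K}\varepsilon(\L,K)$ and part (3) gives $\widehat{K}\varepsilon(\L,H)=0$, so using centrality to commute $\widehat{K}$ past $\varepsilon(\L,H)$ yields $\varepsilon(\L,H)\varepsilon(\L,K) = \bigl(\widehat{K}\varepsilon(\L,H)\bigr)\varepsilon(\L,K) = 0$; the other case is symmetric.

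The main obstacle is isolating the correct cover $M$ in part (3)—namely one lying below the join $HK$—and verifying the identity $KM=KH$ for it, since this is exactly what makes the corresponding factor annihilate $\widehat{K}$. Everything else reduces to routine manipulation of the commuting central idempotents $\widehat{M}$ together with the rule $\widehat{A}\widehat{B}=\widehat{AB}$, and the orthogonality statement is then a short formal consequence of (1) and (3) rather than a separate computation.
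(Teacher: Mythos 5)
Your proof is correct and follows essentially the same approach as the paper: part (1) by the identical factor-by-factor computation (with $H=G$ checked separately), and the vanishing statements by locating a cover of $H$ beneath the join $HK$, using join-closure of $\L$, the rule $\widehat{A}\widehat{B}=\widehat{AB}$, and centrality. The only difference is organizational: the paper proves (2) directly (a cover $M$ with $H<M\le K$ kills the factor) and then derives (3) from it via $\widehat{K}\varepsilon(\L,H)=\widehat{K}\widehat{H}\varepsilon(\L,H)=\widehat{KH}\varepsilon(\L,H)$, whereas you prove (3) directly through the identity $KM=KH$ and read off (2) as a special case; you also write out the ``in particular'' clauses explicitly, which the paper leaves as immediate consequences.
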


\begin{proof}
Clearly, $\widehat{K}\widehat{G} = \widehat{G}$, which shows the first condition for $H=G$. The remaining conditions are vacuously true for $H=G$, so we may assume that $H\neq G$. 
\begin{enumerate}
\item\label{item:idemidem1} For any $M\in \M(\L,H)$, $K \le H < M$. Thus, $\widehat{K}\widehat{H} = \widehat{H}$ and $\widehat{K}\widehat{M} = \widehat{M}$, which implies that $\widehat{K}(\widehat{H}-\widehat{M}) = \widehat{H}-\widehat{M}$. So, $\widehat{K}\varepsilon(\L,H) = \varepsilon(\L,H)$.

\item\label{item:idemidem2} Since $H < K$, there exists some cover $M$ of $H$ in $\L$ such that $H < M \le K$. Then $\widehat{K}(\widehat{H}-\widehat{M}) = \widehat{K} - \widehat{K} = 0$. This implies that $\widehat{K}\varepsilon(\L,H) = 0$.

\item Lastly, $\widehat{K}\varepsilon(\L,H) = \widehat{K}(\widehat{H}\varepsilon(\L,H)) = \widehat{KH}\varepsilon(\L,H) = 0$, where the first equality is by \eqref{item:idemidem1} and the third equality is by \eqref{item:idemidem2}. \qedhere
\end{enumerate}
\end{proof}

\begin{Prop}\label{prop:S=T} Let $\L$ be a semi-lattice of normal subgroups of $G$.  Then $S(\L)\footnote[2]{Although $S(\L)$ was originally defined for lattices, its definition naturally extends to the case when $\L$ is a semi-lattice. In this case, $S(\L)$ may not be a Schur ring, but it will be an algebra.}  = \Span_F\{\varepsilon(\L,H) \mid H\in \L\}$. In particular, \[\sum_{H\in \L} \varepsilon(\L,H) = 1.\]
\end{Prop}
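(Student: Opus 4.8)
The plan is to deduce both assertions from the single pointwise identity
\[\widehat{H} \;=\; \sum_{\substack{K\in\L\\ K\ge H}} \varepsilon(\L,K), \qquad H\in\L.\]
Indeed, taking $H=1$ (the trivial subgroup, where $\widehat{1}=1$ and every $K\in\L$ satisfies $K\ge 1$) gives $\sum_{H\in\L}\varepsilon(\L,H)=1$; and since $S(\L)=\Span_F\{\widehat{H}\mid H\in\L\}$, the identity exhibits each generator $\widehat{H}$ as an $F$-combination of the $\varepsilon(\L,K)$, yielding $S(\L)\subseteq\Span_F\{\varepsilon(\L,H)\}$.

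For the reverse containment $\Span_F\{\varepsilon(\L,H)\}\subseteq S(\L)$ I would first expand the defining product. Writing $\M(\L,H)=\{M_1,\dots,M_t\}$ and using $\widehat{A}\widehat{B}=\widehat{AB}=\widehat{A\vee B}$ for normal subgroups (so any product of the $\widehat{M_i}$ collapses to $\widehat{\ }$ of a single join), together with $\widehat{H}^2=\widehat{H}$ and $M_i\ge H$, one obtains, for $H\ne G$,
\[\varepsilon(\L,H)\;=\;\widehat{H}\;+\!\!\sum_{\emptyset\ne T\subseteq\{1,\dots,t\}}\!\!(-1)^{|T|}\,\widehat{\textstyle\bigvee_{i\in T}M_i}.\]
Because $\L$ is closed under joins, each $\bigvee_{i\in T}M_i$ lies in $\L$, so $\varepsilon(\L,H)\in S(\L)$ (the case $H=G$ being trivial, as $\varepsilon(\L,G)=\widehat{G}$).

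The heart of the argument is the pointwise identity, which I would prove by downward induction on $\L$, i.e.\ assuming it for all $K\in\L$ with $K>H$. The base case $H=G$ is immediate. For the inductive step I would evaluate $\sum_{K>H}\varepsilon(\L,K)$ using the observation that $\{K\in\L: K>H\}=\bigcup_{i=1}^{t}U_i$ with $U_i=\{K\in\L: K\ge M_i\}$ (every $K>H$ dominates some cover of $H$), and that $\bigcap_{i\in T}U_i=\{K\in\L: K\ge\bigvee_{i\in T}M_i\}$. Inclusion--exclusion over the $U_i$ then rewrites this sum as an alternating sum of the inner sums $\sum_{K\ge\bigvee_{i\in T}M_i}\varepsilon(\L,K)$; since each $\bigvee_{i\in T}M_i>H$, the inductive hypothesis replaces each inner sum by $\widehat{\bigvee_{i\in T}M_i}$, and comparing the result with the expansion of $\varepsilon(\L,H)$ above collapses $\sum_{K>H}\varepsilon(\L,K)$ to $\widehat{H}-\varepsilon(\L,H)$. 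Adding $\varepsilon(\L,H)$ gives $\sum_{K\ge H}\varepsilon(\L,K)=\widehat{H}$, completing the induction.

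The main obstacle I anticipate is the bookkeeping in this step: the up-sets $U_i$ of distinct covers overlap, so one cannot simply add the inductive hypotheses over the covers, and the inclusion--exclusion corrections must be matched term-by-term against the nonempty-$T$ summands in the expansion of $\varepsilon(\L,H)$. (This matching is precisely the Möbius function of $\L$ in disguise, but I would prefer the self-contained inclusion--exclusion to invoking Möbius inversion.) Note that \lemref{lem:idemidem} is not strictly needed here, though it confirms that the $\varepsilon(\L,H)$ so obtained form a complete set of pairwise orthogonal idempotents.
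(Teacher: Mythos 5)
Your proof is correct, but it takes a genuinely different route from the paper's. The paper proves the hard containment $S(\L)\subseteq\Span_F\{\varepsilon(\L,H)\mid H\in\L\}$ by induction on $|\L|$: for $H\neq 1$ it passes to the quotient $G/H$, applies the inductive hypothesis to the image semi-lattice $\pi(\lceil H\rceil)$, and lifts the resulting span equality back to $G$; the completeness $\sum_{H\in\L}\varepsilon(\L,H)=1$ is then deduced indirectly, by noting that the nonzero $\varepsilon(\L,H)$ are pairwise orthogonal (\lemref{lem:idemidem}), hence a basis of $S(\L)$, hence the full set of primitive central idempotents of this semisimple algebra. You instead establish the explicit inversion formula $\widehat{H}=\sum_{K\in\L,\,K\ge H}\varepsilon(\L,K)$ by downward induction within the fixed lattice, using inclusion--exclusion over the covers of $H$ (the key points --- every $K>H$ in $\L$ dominates a cover, and $\bigcap_{i\in T}U_i$ is the up-set of the join $\bigvee_{i\in T}M_i$, which lies in $\L$ by join-closure --- are both sound, since $\L$ is finite); both assertions of the proposition then fall out at once, with no quotient groups, no correspondence argument, and no appeal to orthogonality or semisimplicity. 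Your expansion of the defining product of $\varepsilon(\L,H)$ into joins is the same computation the paper leaves implicit in the line ``Clearly, $T\subseteq S$.'' As for what each approach buys: yours is more self-contained, stays entirely inside $F[G]$, and proves something slightly stronger, namely the inversion formula itself, which is a lattice-internal analogue of \lemref{thm:IdempotentSum}. The paper's route, on the other hand, exercises the interaction of $\varepsilon(\L,H)$ with quotient maps that it needs again later (e.g.\ in \corref{cor:IdemDescent}), and its orthogonality argument simultaneously identifies the nonzero $\varepsilon(\L,H)$ as the primitive central idempotents, which is exactly what \thmref{thm:LatticeIdem} then records.
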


\begin{proof}
Let $S = \Span\{\overline{H} \mid H\in \L\}$ and $T = \Span\{\varepsilon(\L,H) \mid H\in \L\}$. We must prove that $S=T$. Clearly, $T \subseteq S$. We prove the reverse containment by induction on $|\L|$. 

If $|\L| = 2$, then $\L = \{1, G\}$, $\varepsilon(\L,1) = \widehat{G}-1$, and $\varepsilon(\L,G) = \widehat{G}$. Hence, $S = T$. Suppose the claim holds for any semi-lattice with order less that $|\L|$. Let $H\in \L$ and let $\lceil H\rceil = \{K\mid K\in \L, H\le K\}$. Now, $\lceil H\rceil$ is a semi-lattice of normal subgroups of $G$ with unit $H$ and if $H\neq 1$ then $\lceil H\rceil$ has order strictly less $|\L|$. Let $\pi : G \to G/H$ be the quotient map. Then $\pi(\lceil H\rceil)$ is a semi-lattice of normal subgroups of $G/H$ and $\Span\{\overline{K/H} \mid K \in \lceil H\rceil\} = \Span\{\varepsilon(\pi(\lceil H\rceil),K/H) \mid K \in \lceil H\rceil\}$ by induction. Lifting this back to $G$, we have 
\begin{eqnarray*}
\Span\{\overline{K} \mid K \in \lceil H\rceil\} &=& \Span\{\varepsilon(\lceil H\rceil,K) \mid K \in \lceil H\rceil\}\\
& =& \Span\{\varepsilon(\L,K) \mid K \in \lceil H\rceil\} \subseteq T.
\end{eqnarray*} In particular, $\overline{H} \in T$ for all $H\neq 1$. But $\varepsilon(\L,1) = 1 + \alpha$, where $\alpha\in \Span\{\overline{H}\mid H\in \L, H\neq 1\} \subseteq T$. Therefore, $1 = \varepsilon(\L,1) - \alpha \in T$, which proves the claim $S=T$. 

By orthogonality, $\{\varepsilon(\L,H) \neq 0\mid H\in \L\}$ is a basis of $S$ and $S$ is semisimple. Hence, $\{\varepsilon(\L,H)\neq 0\mid H\in \L\}$ is the complete set of primitive central idempotents of $S$. Therefore, $\sum_{H\in \L} \varepsilon(\L,H) = 1$.
\end{proof}

In particular, $\{\varepsilon(\L,H) \mid H \in \L\}$ is a complete set of orthogonal central idempotents in $\FG$. 

\begin{Thm}\label{thm:LatticeIdem} Let $G$ be a finite group and let $F$ be a field with characteristic not dividing $|G|$. Let $\L$ be a semi-lattice of normal subgroups of $G$. Then $\{\varepsilon(\L,H) \neq 0 \mid H\in \L\}$ is a complete set of primitive central idempotents of $S(\L)$ and $S(\L) \cong \bigoplus_n F$, where $n = | \{\varepsilon(\L,H) \neq 0 \mid H\in \L\}|$.
\end{Thm}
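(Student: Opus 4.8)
The plan is to leverage \propref{prop:S=T} almost entirely, since that proposition has already done the heavy lifting: it shows that the nonzero elements of $\{\varepsilon(\L,H) \mid H\in\L\}$ form a basis of $S(\L)$ and that $\dsum_{H\in\L}\varepsilon(\L,H) = 1$. Combined with \lemref{lem:idemidem}, which gives $\varepsilon(\L,H)^2 = \varepsilon(\L,H)$ and $\varepsilon(\L,H)\varepsilon(\L,K) = 0$ for $H\neq K$, I have in hand a complete set of pairwise orthogonal central idempotents that also form a basis of $S(\L)$. So the only remaining work is the structural bookkeeping needed to convert this data into an explicit Wedderburn decomposition and to verify primitivity.

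First I would record the Peirce decomposition of a single idempotent. Writing an arbitrary $s\in S(\L)$ in the idempotent basis as $s = \dsum_K c_K\,\varepsilon(\L,K)$ and multiplying by a fixed nonzero $\varepsilon(\L,H)$, orthogonality collapses the sum to $s\,\varepsilon(\L,H) = c_H\,\varepsilon(\L,H)$. Hence the two-sided ideal $S(\L)\varepsilon(\L,H)$ equals $F\,\varepsilon(\L,H)$, a one-dimensional subspace. Because $\varepsilon(\L,H)^2 = \varepsilon(\L,H)$, the map $c\,\varepsilon(\L,H)\mapsto c$ is an $F$-algebra isomorphism $F\,\varepsilon(\L,H)\xrightarrow{\sim} F$, with $\varepsilon(\L,H)$ serving as the multiplicative identity of this ideal.

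Next I would assemble the pieces. Since the nonzero idempotents form a basis, $S(\L) = \bigoplus_{H\,:\,\varepsilon(\L,H)\neq 0} F\,\varepsilon(\L,H)$ as $F$-vector spaces, and orthogonality upgrades this to a direct sum of ideals, i.e. an internal direct product of $F$-algebras. Combining this with the previous step yields $S(\L)\cong\bigoplus_n F$ with $n = |\{\varepsilon(\L,H)\neq 0\mid H\in\L\}|$. Finally, primitivity of each $\varepsilon(\L,H)$ follows because the corresponding factor is the field $F$, whose only idempotents are $0$ and $1$: under the isomorphism each nonzero $\varepsilon(\L,H)$ is identified with the standard idempotent of a single copy of $F$, which cannot split within that copy and cannot be refined across the orthogonal factors, so it is not a sum of two nonzero central idempotents. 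This also re-confirms completeness, already noted in \propref{prop:S=T}.

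I do not expect a serious obstacle here; essentially all of the difficulty was discharged in \propref{prop:S=T}, where the induction on $|\L|$ established that the idempotents span $S(\L)$. The only point requiring a little care is ensuring that the vector-space direct sum is genuinely an algebra direct product rather than merely an additive one, which is precisely what orthogonality of the central idempotents guarantees.
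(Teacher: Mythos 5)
Your proposal is correct and follows essentially the same route as the paper: both rest entirely on \propref{prop:S=T} (the nonzero $\varepsilon(\L,H)$ form a basis) together with the orthogonality from \lemref{lem:idemidem}, concluding that each ideal $S(\L)\varepsilon(\L,H)$ is one-dimensional and hence each idempotent is primitive. Your write-up merely makes explicit the Peirce-decomposition bookkeeping that the paper's terser proof leaves implicit.
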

\begin{proof}
Let $S= S(\L)$. By \propref{prop:S=T}, $S = \Span\{\varepsilon(\L,H)\mid H\in \L\}$ and $\{\varepsilon(\L,H)\neq 0\mid H\in \L\}$ is a basis of $S$. Thus, this basis must be a complete set of idempotents and the ideal of each idempotent must have dimension 1. Thus, each idempotent is primitive. 
\end{proof}

\begin{Cor} Let $S$ be a lattice Schur ring over $\FG$ and let $\varepsilon \in S$ be an idempotent. Then $\varepsilon\in \Span_F\{\overline{H} \mid \overline{H} \in S\}$.\end{Cor}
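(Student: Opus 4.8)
The plan is to deduce the statement directly from the structural results already established for lattice Schur rings, without any new computation. Write $S = S(\L)$ for a lattice $\L$ of normal subgroups of $G$. The key observation is that \propref{prop:S=T} and \thmref{thm:LatticeIdem} together exhibit two natural spanning sets of $S$: the subgroup-sums $\{\overline{H} \mid H \in \L\}$ and the primitive central idempotents $\{\varepsilon(\L,H) \mid H \in \L\}$, both of which are in fact bases. Since each lattice generator $\overline{H}$ with $H \in \L$ manifestly lies in $S$, it is a subgroup-sum belonging to the set $\{\overline{H} \mid \overline{H} \in S\}$, which yields the chain
\[
S = \Span_F\{\overline{H} \mid H \in \L\} \subseteq \Span_F\{\overline{H} \mid \overline{H}\in S\} \subseteq S,
\]
forcing all three spaces to coincide. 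As $\varepsilon \in S$ by hypothesis, the conclusion follows at once.

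First I would make precise that $\{\overline{H} \mid H \in \L\}$ is a basis of $S$ — distinct subgroups have distinct supports, so their indicator sums are linearly independent — so that the first equality in the display is exactly the content of \defref{def:latticeSchur} combined with \propref{prop:S=T}. Then I would record the trivial but essential point that every lattice generator $\overline{H}$ $(H \in \L)$ is itself a subgroup-sum lying in $S$; this is what makes the middle span at least as large as $S$, while the reverse inclusion $\Span_F\{\overline{H}\mid\overline{H}\in S\}\subseteq S$ is automatic.

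For readers who prefer to see the idempotent rendered explicitly in terms of subgroups, I would add the more hands-on route. Since $S\cong\bigoplus_n F$ is commutative and semisimple, $\varepsilon$ is a sum of some subfamily of the primitive idempotents, say $\varepsilon = \sum_{H}\varepsilon(\L,H)$, and each factor $\varepsilon(\L,H)=\prod_{M\in\M(\L,H)}(\widehat{H}-\widehat{M})$ expands into an $F$-combination of products $\widehat{K_1}\cdots\widehat{K_t}$ with $K_i\in\L$. Using $\widehat{K_1}\cdots\widehat{K_t}=\widehat{K_1\cdots K_t}$ together with closure of $\L$ under joins, each such product equals $\widehat{J}=\tfrac{1}{|J|}\overline{J}$ for some $J\in\L$, so that $\varepsilon(\L,H)\in\Span_F\{\overline{J}\mid J\in\L\}$ and hence $\varepsilon$ lies there too.

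I do not anticipate a genuine obstacle, since the corollary is essentially a repackaging of \propref{prop:S=T}; indeed the argument shows that every element of $S$, not merely an idempotent, lies in $\Span_F\{\overline{H}\mid\overline{H}\in S\}$. The only point demanding care is the bookkeeping that the two index families, $\{H\in\L\}$ and $\{H \mid \overline{H}\in S\}$, generate the same subspace — that is, not conflating the subgroup-sums indexed by $\L$ with all subgroup-sums that happen to land in $S$. Once one notes that the $\L$-generators sit among the latter, both spans collapse to $S$ and the statement is immediate.
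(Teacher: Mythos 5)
Your main chain of inclusions is correct and is exactly the paper's (implicit) justification: the corollary is stated without a proof precisely because $S = S(\L) = \Span_F\{\overline{H} \mid H\in\L\}$ by \defref{def:latticeSchur}, each $\overline{H}$ with $H\in\L$ lies in $S$, and $\Span_F\{\overline{H} \mid \overline{H}\in S\}\subseteq S$ trivially, so all three spaces coincide and every element of $S$ (idempotent or not) lies in the span. Your secondary, hands-on route is also sound and is essentially the computation inside \propref{prop:S=T}.

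One auxiliary claim you make is false, however, and should be deleted: that $\{\overline{H}\mid H\in\L\}$ is a basis of $S$ because ``distinct subgroups have distinct supports, so their indicator sums are linearly independent.'' Distinct supports do not imply linear independence. For $G = Z_2\times Z_2$ with $\L$ the full lattice of subgroups, the three subgroups $H_1, H_2, H_3$ of order $2$ satisfy $\overline{H_1}+\overline{H_2}+\overline{H_3} = \overline{G} + 2\cdot 1$, a nontrivial dependence; correspondingly $\varepsilon(\L,1) = 0$, so $\{\varepsilon(\L,H)\mid H\in\L\}$ fails to be a basis as well. This is why \thmref{thm:LatticeIdem} indexes the basis by the \emph{nonzero} idempotents, and why the paper asserts that $\{\overline{H}\mid H\in\L\}$ is a basis only for cyclic $G$ (\lemref{lem:CycIdem}). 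Fortunately your argument uses only the spanning property, never linear independence, so the proof stands once the basis claims are removed.
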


\begin{Lem}\label{thm:IdempotentSum} For any semi-lattice $\L$ of normal subgroups of a finite group $G$ and any $H\in \L$, we have
\[\varepsilon(\L,H) = \sum_{K} \varepsilon(G,K)\] where the sum ranges over all subgroups $K$ of $G$ such that $H\le K$ and $M\not\le K$ for all $M\in \M(\L,H)$. 
\end{Lem}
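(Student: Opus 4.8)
The plan is to expand $\varepsilon(\L,H)$ against the complete orthogonal system of idempotents attached to the \emph{full} lattice of normal subgroups, and then to read off which terms survive using \lemref{lem:idemidem}. Let $\nn$ denote the lattice of all normal subgroups of $G$; this is a semi-lattice in the sense of \secref{sec:IdemGroup}, and $\varepsilon(G,K) = \varepsilon(\nn,K)$ for each $K\trianglelefteq G$. Since $\L\subseteq\nn$, \propref{prop:S=T} applied to $\nn$ gives $\sum_{K\trianglelefteq G}\varepsilon(G,K) = 1$ with the $\varepsilon(G,K)$ pairwise orthogonal. Multiplying this identity by the central idempotent $\varepsilon(\L,H)$ yields
\[\varepsilon(\L,H) = \sum_{K\trianglelefteq G}\varepsilon(\L,H)\,\varepsilon(G,K),\]
so it suffices to show that the product $\varepsilon(\L,H)\,\varepsilon(G,K)$ equals $\varepsilon(G,K)$ when $H\le K$ and $M\not\le K$ for every $M\in\M(\L,H)$, and equals $0$ in every other case.

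To evaluate each product I would apply \lemref{lem:idemidem} to the lattice $\nn$ at the node $K$: combining its parts, for any $J\trianglelefteq G$ one has $\widehat{J}\,\varepsilon(G,K) = \varepsilon(G,K)$ if $J\le K$ and $\widehat{J}\,\varepsilon(G,K) = 0$ if $J\not\le K$. First, writing $\varepsilon(\L,H) = \widehat{H}\,\varepsilon(\L,H)$ (part~(1) of the lemma for $\L$ with $K=H$) and using centrality, $\varepsilon(\L,H)\,\varepsilon(G,K) = \varepsilon(\L,H)\bigl(\widehat{H}\,\varepsilon(G,K)\bigr)$, which vanishes whenever $H\not\le K$. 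When $H\le K$, I would distribute $\varepsilon(G,K)$ across the defining product $\varepsilon(\L,H) = \prod_{M\in\M(\L,H)}(\widehat{H}-\widehat{M})$; because $\varepsilon(G,K)$ is central and idempotent it may be replicated into each factor, giving
\[\varepsilon(\L,H)\,\varepsilon(G,K) = \prod_{M\in\M(\L,H)}\bigl((\widehat{H}-\widehat{M})\,\varepsilon(G,K)\bigr).\]
Each factor equals $\varepsilon(G,K) - \widehat{M}\,\varepsilon(G,K)$, which is $0$ if $M\le K$ and $\varepsilon(G,K)$ if $M\not\le K$. Hence the product is $0$ as soon as some cover $M$ lies inside $K$, and otherwise collapses to $\varepsilon(G,K)^{|\M(\L,H)|} = \varepsilon(G,K)$. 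Summing the surviving terms recovers precisely the stated range of $K$.

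The computation itself is short; the only genuine decision is the choice of frame, namely applying \lemref{lem:idemidem} at the node $K$ in the full lattice $\nn$ rather than at $H$ in $\L$, so that the factors $\widehat{H}-\widehat{M}$ act cleanly on $\varepsilon(G,K)$. The one point meriting care is the distribution step: one must observe that $\varepsilon(G,K)$ being a central idempotent is exactly what permits the single factor $\varepsilon(G,K)$ to be copied into every factor of the product without changing its value (since $\varepsilon(G,K)^{t}=\varepsilon(G,K)$). The degenerate case $H=G$, where $\M(\L,H)=\emptyset$ and $\varepsilon(\L,G)=\widehat{G}$, is handled immediately by the $\widehat{H}$-reduction and forces the sole surviving index $K=G$.
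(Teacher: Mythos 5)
Your proof is correct. The core computation is the same one the paper uses: applying \lemref{lem:idemidem} (at the node $K$ in the full lattice of normal subgroups) to collapse each factor $(\widehat{H}-\widehat{M})\varepsilon(G,K)$ to $0$ or $\varepsilon(G,K)$. Where you genuinely differ is in how the identity is assembled. You multiply the completeness relation $\sum_{K\trianglelefteq G}\varepsilon(G,K)=1$ (\propref{prop:S=T} for the full lattice) by $\varepsilon(\L,H)$ and then evaluate \emph{every} product $\varepsilon(\L,H)\varepsilon(G,K)$, in particular proving the annihilation cases ($H\not\le K$, or some cover $M\le K$) outright. The paper proves only the absorption case $\varepsilon(\L,H)\varepsilon(G,K)=\varepsilon(G,K)$ and then finishes by a semisimplicity argument: each primitive central idempotent of $\FG$ is involved in exactly one $\varepsilon(G,K)$, and ``partitioning'' these primitives among the complete orthogonal system $\{\varepsilon(\L,H)\mid H\in\L\}$ forces the equality. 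That finish quietly relies on the fact that the sets $\nn(\L,H)$, $H\in\L$, partition the normal subgroups of $G$ (each $K\trianglelefteq G$ lies in $\nn(\L,H)$ precisely for the largest member $H$ of $\L$ contained in $K$), a point the paper leaves implicit. Your route buys self-containedness: it needs nothing beyond \lemref{lem:idemidem} and \propref{prop:S=T}, and the out-of-range vanishing is established rather than inferred; the only costs are the small bookkeeping of the distribution step (justified, as you note, by $\varepsilon(G,K)$ being a central idempotent) and the degenerate case $H=G$, both of which you handle correctly. The paper's route is shorter on the page and keeps the ``involvement'' viewpoint in the foreground, which is the language reused in later arguments such as \corref{cor:IdemDescent} and \thmref{thm:ScycIdem}.
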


\begin{proof} Since $H\le K$ and $M \not\le K$ for all $M\in \M(\L,H)$, $\widehat{H}\varepsilon(G,K) = \varepsilon(G,K)$ and $\widehat{M}\varepsilon(G,K) = 0$. Hence, $\varepsilon(\L,H)\varepsilon(G,K) = \varepsilon(G,K)$. Thus, all of the primitive idempotents involved in $\varepsilon(G,K)$ are also involved in $\varepsilon(\L,H)$. Furthermore, $\{\varepsilon(G,K)\mid K\trianglelefteq G\}$ is a compete set of orthogonal central idempotents. Therefore, every primitive central idempotent of $\FG$ is involved with one and only one of the $\varepsilon(G,K)$. This  determines all of the primitive central idempotents involved in $\varepsilon(\L,H)$. By partitioning these primitive central idempotents, we get the desired  equality.
\end{proof}

The previous lemma then shows how the system of central idempotents resulting from $\L$ can be decomposed into a sum of idempotents of the form $\varepsilon(G,H)$ in $\FG$. We note however that $\varepsilon(G,H)$ is not necessarily primitive. In fact, $\varepsilon(G,H)$ may be zero. For example, let $G = Z_2\times Z_2$ and $F = \Q$. Then $\varepsilon(G,1) = 0$. On the other hand, when $G$ is cyclic, $\varepsilon(\L,H) \neq 0$ for all $H\in \L$, as we now show.

\begin{Lem}\label{lem:CycIdem} Let $G$ be a finite cyclic group and let $\L$ be a semi-lattice of subgroups of $G$. Then $\varepsilon(\L,H)\neq 0$ for all $H\in \L$. \end{Lem}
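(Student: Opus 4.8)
The plan is to first reduce the claim to the special case of the full subgroup lattice, and then to settle that case by passing to characters over an algebraic closure of $F$.

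For the reduction, I would apply \lemref{thm:IdempotentSum}, which expresses $\varepsilon(\L,H)$ as a sum of the idempotents $\varepsilon(G,K)$ over those subgroups $K$ with $H\le K$ and $M\not\le K$ for every $M\in\M(\L,H)$. The subgroup $K=H$ always satisfies these conditions, since each cover $M\in\M(\L,H)$ satisfies $H<M$ and hence $M\not\le H$. Because the $\varepsilon(G,K)$ are pairwise orthogonal central idempotents (they form the system associated to the full lattice of subgroups of $G$, which is a genuine lattice of normal subgroups since $G$ is cyclic, so \lemref{lem:idemidem} applies), multiplying the sum from \lemref{thm:IdempotentSum} by $\varepsilon(G,H)$ annihilates every term except the one for $K=H$, giving $\varepsilon(\L,H)\,\varepsilon(G,H)=\varepsilon(G,H)$. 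Thus it suffices to prove $\varepsilon(G,H)\neq 0$ for every subgroup $H$ of $G$.

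To analyze $\varepsilon(G,H)$, I would extend scalars to $\overline{F}$, where $\FG$ becomes split semisimple and its primitive idempotents are the $e_\chi=\frac{1}{|G|}\sum_{g\in G}\chi(g^{-1})g$ indexed by the characters $\chi:G\to\overline{F}^{\times}$ (these exist and are distinct because $\mathrm{char}\,F\nmid|G|$). Using $e_\chi g=\chi(g)e_\chi$ together with the orthogonality relation $\frac{1}{|H|}\sum_{h\in H}\chi(h)=1$ or $0$ according as $\chi|_H$ is trivial or not, one obtains $\widehat{H}=\sum_{\chi|_H=1}e_\chi$, and likewise for each cover $M$. Hence $\widehat{H}-\widehat{M}=\sum e_\chi$ taken over characters trivial on $H$ but not on $M$, and multiplying over $M\in\M(G,H)$ yields $\varepsilon(G,H)=\sum e_\chi$, the sum ranging over characters $\chi$ that are trivial on $H$ but nontrivial on every cover of $H$. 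Since the covers of $H$ in a cyclic group are exactly the subgroups $M$ with $H<M$ and $[M:H]$ prime, these are precisely the characters that factor through $G/H$ and are faithful on $G/H$.

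It then remains to exhibit at least one faithful character of $G/H$, and this is exactly where cyclicity is essential: $G/H$ is cyclic of order $m=[G:H]$ with $\mathrm{char}\,F\nmid m$, so $x^m-1$ is separable over $\overline{F}$ and $\overline{F}^{\times}$ contains a primitive $m$-th root of unity, which defines a faithful character of $G/H$ (there are in fact $\phi(m)$ of them, $\phi$ denoting Euler's totient). Because the $e_\chi$ are nonzero and pairwise orthogonal, a nonempty sum of them is nonzero, so $\varepsilon(G,H)\neq 0$, which by the first paragraph completes the proof. I expect the only genuine obstacle to be the positive-characteristic case: a naive approach computing a single coefficient of $\varepsilon(G,H)$ fails, since, for instance, the coefficient of the identity equals $\phi(m)/|G|$, which can vanish in $\overline{F}$. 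The base-change-to-characters argument circumvents this, as it relies only on the existence of a primitive $m$-th root of unity, guaranteed by the separability of $x^m-1$; this same viewpoint also explains the non-cyclic failure (e.g.\ $\varepsilon(G,1)=0$ for $G=Z_2\times Z_2$) as the absence of a faithful character on a non-cyclic quotient.
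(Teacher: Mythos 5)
Your proof is correct, but it takes a genuinely different route from the paper's. The paper's own argument is a short dimension count carried out entirely inside $\FG$: for cyclic $G$ the set $\{\overline{H} \mid H\in\L\}$ is linearly independent (distinct subgroups are distinguished by their generators), so $\dim_F S(\L) = |\L|$; by \propref{prop:S=T} the pairwise orthogonal idempotents $\varepsilon(\L,H)$ also span $S(\L)$, and since the nonzero ones among them are distinct and linearly independent, equality in the resulting count forces every $\varepsilon(\L,H)$ to be nonzero. You instead reduce to the full subgroup lattice via $\varepsilon(\L,H)\varepsilon(G,H)=\varepsilon(G,H)$ (sound; it even follows directly from parts (1) and (2) of \lemref{lem:idemidem} applied factor by factor, without invoking \lemref{thm:IdempotentSum}) and then prove $\varepsilon(G,H)\neq 0$ by extending scalars to an algebraic closure and decomposing into character idempotents $e_\chi$. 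Each approach has its merits. Yours yields strictly more information: the identity expressing $\varepsilon(G,H)$ as the sum of $e_\chi$ over the faithful characters of $G/H$ explains in one stroke why cyclicity matters (cyclic quotients admit faithful characters), shows $\varepsilon(G,H)$ survives as a nonzero sub-idempotent of $\varepsilon(\L,H)$, and accounts for the failure on non-cyclic quotients --- precisely the example $\varepsilon(G,1)=0$ for $G=Z_2\times Z_2$ over $\Q$ discussed right after the lemma --- essentially recovering \propref{thm:AbelianIdempotents} in the abelian case. The paper's count is shorter, needs no scalar extension or character theory, and leans only on machinery already established in \propref{prop:S=T}. Your caution about positive characteristic is also well placed: the coefficient of the identity in $\varepsilon(G,H)$ equals the number of faithful characters of $G/H$ divided by $|G|$, which can vanish when the characteristic of $F$ divides that numerator, so a naive coefficient computation would not suffice, whereas both your character-splitting argument and the paper's dimension count handle arbitrary $F$ with characteristic prime to $|G|$.
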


\begin{proof}
For a cyclic group $G$, $S(\L)$ has for a basis the set $\{\overline{H}\mid H\in \L\}$. This can be seen by examining the generators of each subgroup in $\L$. Thus, $\dim_F S(\L) = |\L|$. By \propref{prop:S=T}, we have that $S(\L) = \Span\{\varepsilon(\L,H)\mid H\in \L\}$. Therefore, 
\[|\L| = \dim S(\L) = |\{\varepsilon(\L,H) \neq 0\mid H\in \L\}| \le |\{\varepsilon(\L,H)\mid H\in \L\}| \le |\L|.\] Therefore, $|\{\varepsilon(\L,H) \neq 0\mid H\in \L\}| = |\{\varepsilon(\L,H)\mid H\in \L\}|$, which implies that $\varepsilon(\L,H)\neq 0$ for all $H\in \L$.
\end{proof}

On the other hand, let $G = Z_2\times Z_2 = \langle a,b\rangle$ and let $\L = \{1, \langle a\rangle, G\}$. Although $G/1$ is not cyclic, $\varepsilon(\L,1) = 1 - \widehat{\langle a\rangle} = \frac{1}{2} (1-a) \neq 0$. Thus, for general semi-lattices of abelian groups, $\varepsilon(\L,H)$ can be nonzero even if $G/H$ is not cyclic. In this example $\varepsilon(\L,1)$ is imprimitive in $\QG$ since $\varepsilon(\L,1) = \varepsilon(G,\langle b\rangle) + \varepsilon(G,\langle ab\rangle)$.

Let $\L$ be a semi-lattice of normal subgroups of a finite group $G$. For any $H\in \L$, let
\[\nn(\L,H) = \{K\trianglelefteq G \mid H \le K\;\text{and}\; M\not\le K,\;\text{for all}\; M\in \M(\L,H)\}.\] So, $\nn(\L,H)$ is the set of all normal subgroups between $H$ and an $\L$-cover of $H$. Then \[\varepsilon(\L,H) = \sum_{K\in \nn(\L,H)} \varepsilon(G,K),\] by \lemref{thm:IdempotentSum}.  Generalizing the above set, for any $N\in\nn(\L,H)$,
\[\nn(\L,H,N) = \{K \in \nn(\L,H)\mid K \ge N\}.\]  We mention that $\nn(\L,H)$ is closed under intersections, as is $\nn(\L,H,N)$.

\begin{Lem}\label{thm:IdemDescent} Let $\L$ be a semi-lattice of normal subgroups of a finite group $G$ and let $H\in \L$. Let $N\in \nn(\L,H)$ and let $\pi : G \to G/N$ be the natural quotient map. Then $\pi$ induces a bijection $\nn(\L,H,N) \to \nn(\pi(\L), N/N)$.\end{Lem}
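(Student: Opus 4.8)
The plan is to realize the asserted map as the restriction of the standard correspondence-theorem bijection and then verify that the two membership conditions match under this correspondence. Recall that $K\mapsto K/N$ is an inclusion-preserving bijection between the normal subgroups of $G$ containing $N$ and the normal subgroups of $G/N$, with inverse $\bar K\mapsto \pi^{-1}(\bar K)$; and that $\pi(\L)$ is again a semi-lattice of normal subgroups of $G/N$ with bottom element $N/N$, since images of normal subgroups are normal and joins are preserved. Because every $K\in\nn(\L,H,N)$ is normal with $K\ge N$, and because $N/N$ is the bottom of $G/N$ (so the condition $N/N\le \bar K$ is automatic), the theorem will follow once I show that for normal $K\ge N$ one has $M\not\le K$ for all $M\in\M(\L,H)$ if and only if $\bar M\not\le K/N$ for all $\bar M\in\M(\pi(\L),N/N)$.

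First I would record the translation of inclusions: since $N\le K$, one has $\pi(M)=MN/N\le K/N$ iff $MN\le K$ iff $M\le K$, so the condition defining $\nn(\L,H,N)$ is equivalent to $\pi(M)\not\le K/N$ for all $M\in\M(\L,H)$. The heart of the argument is thus to relate the images $\pi(M)$, $M\in\M(\L,H)$, to the covers of $N/N$ in $\pi(\L)$. Here the hypothesis $N\in\nn(\L,H)$ is essential: it guarantees $H\le N$ and $M\not\le N$ for every cover $M$ of $H$, so that each $\pi(M)=MN/N$ is strictly larger than $N/N$. I would then prove the key claim that every nontrivial element of $\pi(\L)$ lies above some $\pi(M)$ with $M\in\M(\L,H)$: given $L\in\L$ with $\pi(L)>N/N$, replace $L$ by $LH\in\L$ (same image, as $H\le N$), observe $LH>H$ since $LH\not\le N$, and choose a cover $M$ of $H$ inside the interval $[H,LH]$ by finiteness of $\L$; then $N/N<\pi(M)\le\pi(L)$.

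From this claim two facts follow that together yield the required equivalence. As the covers of $N/N$ in $\pi(\L)$ are exactly the minimal nontrivial elements of $\pi(\L)$, the claim forces each such cover to equal some $\pi(M)$ with $M\in\M(\L,H)$; conversely each $\pi(M)$, being nontrivial, dominates some cover of $N/N$. The forward implication of the equivalence is then immediate, since each cover $\bar M$ is an admissible $\pi(M)$; the reverse follows because if $\bar M\le\pi(M)$ and $\pi(M)\le K/N$ then $\bar M\le K/N$, contradicting the hypothesis. Assembling these facts, the correspondence bijection restricts to a well-defined, injective, and surjective map $\nn(\L,H,N)\to\nn(\pi(\L),N/N)$. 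The main obstacle is the cover-matching claim — verifying that the minimal elements among the images $\pi(M)$ of the covers $M$ of $H$ are precisely the covers of $N/N$ in $\pi(\L)$, and that each $\pi(M)$ dominates such a cover; once that is established, the remainder is routine bookkeeping via the correspondence theorem.
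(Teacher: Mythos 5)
Your proof is correct, and it organizes the argument differently from the paper. The paper verifies the three bijection properties directly: for well-definedness it takes $K\in\nn(\L,H,N)$ and any $M'\in\pi(\L)$ with $N/N\le M'\le\pi(K)$, lifts $M'$ to some $M\in\L$ with $H\le M$ (replacing $M$ by $MH$ --- the same join trick you use), and squeezes $H\le M\le MN\le KN\le K$ to force $M=H$; injectivity is the correspondence theorem; surjectivity pulls $K'\in\nn(\pi(\L),N/N)$ back to a normal $K\ge N$ and squeezes any $L\in\L$ with $H\le L\le K$ down to $H$, using first that $K'$ excludes the covers of $N/N$ and then that $N\in\nn(\L,H)$. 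You instead isolate a structural claim the paper never makes explicit --- that the covers of $N/N$ in $\pi(\L)$ are precisely the minimal elements among the images $\pi(M)$, $M\in\M(\L,H)$, and that every such image dominates one of these covers --- after which the bijection falls out formally from the correspondence theorem once the two membership conditions are shown equivalent. Both routes rest on the same ingredients: the correspondence theorem, join-closure of $\L$ via $L\mapsto LH$ (valid because $H\le N$, which is exactly where the hypothesis $N\in\nn(\L,H)$ enters), and finiteness of the semi-lattices to extract covers below a given element. The paper's interval computations are slightly shorter; your version buys an explicit description of $\M(\pi(\L),N/N)$ in terms of $\M(\L,H)$, which makes the induced map and its inverse transparent and is a reusable fact in its own right.
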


\begin{proof}
First, let $K\in \nn(\L,H,N)$.  So, $\pi(K)$ is normal in $G/N$ and clearly $N/N \le \pi(K)$. Suppose $M'\in \pi(\L)$ such that $N/N\le M'\le \pi(K)$. Then there exists some $M\in \L$ such that $\pi(M) = M'$. Since $MH\in \L$ and $\pi(MH) = M'$, we may assume that $H\le M$. Next,
\[H \le M \le MN \le KN \le K.\] Since $K\in \nn(\L,H,N)$, the only normal subgroup between $K$ and $H$ contained in $\L$ is $H$. Thus, $M=H$, which implies \[M'=\pi(M)=\pi(H) = N/N.\] Since there are no subgroups in $\pi(\L)$ between $N/N$ and $\pi(K)$  other than $N/N$ itself, $\pi(K) \in \nn(\pi(\L),N/N)$. Hence, $\pi(\nn(\L,H,N))\subseteq \nn(\pi(\L),N/N).$

Second, suppose $\pi(K_1) = \pi(K_2)$, for $K_1, K_2\in \nn(\L,H,N)$. Since $N\le K_1\cap K_2$, $K_1=K_2$, by correspondence. Therefore, $\pi :  \nn(\L,H,N) \to \nn(\pi(\L), N/N)$ is injective.

Lastly, let $K'\in \nn(\pi(\L),N/N)$. Then there exists a unique normal subgroup $K$ of $G$ such that $\pi(K)= K'$ and $N\le K$. Let $L\in \L$ such that $H\le L\le K$. Then $N/N\le \pi(L) \le K'$. Since $\pi(L)\in \pi(\L)$, it must be that $\pi(L) = N/N$, which implies that $L \le N$. But $N\in \nn(\L,H)$. Thus, $L = H$, which proves that $K \in \nn(\L,H,N)$, also. This shows that $\pi :  \nn(\L,H,N) \to \nn(\pi(\L), N/N)$ is surjective.
\end{proof} 

\begin{Cor}\label{cor:IdemDescent} Let $G$ be a finite group with semi-lattice of normal subgroups $\L$. Let $H\in \L$ and $N\in \nn(\L,H)$. Furthermore, if $\pi : G\to G/N$ is the quotient map, then $\pi(\varepsilon(\L,H)) = \varepsilon(\pi(\L), N/N)$.\end{Cor}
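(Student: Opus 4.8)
The plan is to apply $\pi$ to the decomposition $\varepsilon(\L,H) = \sum_{K \in \nn(\L,H)} \varepsilon(G,K)$ furnished by \lemref{thm:IdempotentSum}, evaluate each $\pi(\varepsilon(G,K))$ separately, and then re-index the surviving terms using the bijection of \lemref{thm:IdemDescent}. The basic computational input I would record first is that $\pi(\widehat{K}) = \widehat{\pi(K)}$ for every subgroup $K \le G$: since $\pi$ restricts to a surjection $K \to \pi(K)$ with kernel $K \cap N$, each element of $\pi(K)$ has exactly $|K \cap N|$ preimages in $K$, so $\pi(\overline{K}) = |K\cap N|\,\overline{\pi(K)}$, and dividing by $|K| = |\pi(K)|\,|K\cap N|$ gives the claim. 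Because $\pi$ is an $F$-algebra homomorphism and each $\widehat{M}$ is central, this reduces the whole problem to understanding $\pi(\varepsilon(G,K)) = \prod_{M \in \M(G,K)}\bigl(\widehat{\pi(K)} - \widehat{\pi(M)}\bigr)$.

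The heart of the argument, and the step I expect to require the most care, is the dichotomy according to whether $N \le K$. If $N \not\le K$, then $KN > K$, so choosing a normal subgroup $M_0$ minimal in the interval $K < M_0 \le KN$ produces a cover $M_0 \in \M(G,K)$; applying $\pi$ and using $\pi(N) = N/N$ forces $\pi(K) = \pi(M_0)$, so the factor $\widehat{\pi(K)} - \widehat{\pi(M_0)}$ vanishes and hence $\pi(\varepsilon(G,K)) = 0$. If instead $N \le K$, then every cover $M$ of $K$ satisfies $N \le K < M$, so the correspondence theorem makes $M \mapsto M/N$ a bijection from $\M(G,K)$ onto $\M(G/N, K/N)$; substituting $\pi(K) = K/N$ and $\pi(M) = M/N$ then identifies $\pi(\varepsilon(G,K))$ with $\varepsilon(G/N, K/N)$.

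Combining the two cases, only the terms with $N \le K$, i.e. $K \in \nn(\L,H,N)$, survive, giving
\[\pi(\varepsilon(\L,H)) = \sum_{K \in \nn(\L,H,N)} \varepsilon(G/N, K/N).\]
Finally I would observe that $\pi(\L)$ is a semi-lattice of normal subgroups of $G/N$ containing $N/N = \pi(H)$, invoke \lemref{thm:IdemDescent} to rewrite the index set $\{K/N : K \in \nn(\L,H,N)\}$ as $\nn(\pi(\L), N/N)$, and then apply \lemref{thm:IdempotentSum} in $G/N$ to the semi-lattice $\pi(\L)$ and the subgroup $N/N$ to recognize the resulting sum as exactly $\varepsilon(\pi(\L), N/N)$, completing the proof.
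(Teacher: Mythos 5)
Your proposal is correct and follows essentially the same route as the paper's proof: decompose $\varepsilon(\L,H)$ via \lemref{thm:IdempotentSum}, observe that the terms with $K \not\ge N$ vanish under $\pi$ while those with $K \ge N$ map to $\varepsilon(G/N, K/N)$, re-index the surviving terms by the bijection of \lemref{thm:IdemDescent}, and reassemble using \lemref{thm:IdempotentSum} in $G/N$. The only difference is that you spell out the justifications (the computation $\pi(\widehat{K}) = \widehat{\pi(K)}$ and the cover dichotomy) for the two vanishing/descent facts that the paper asserts without proof, which is a welcome addition rather than a deviation.
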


\begin{proof}
By \lemref{thm:IdempotentSum}, 
\[\varepsilon(\L,H) = \sum_{K\in \nn(\L,H,N)} \varepsilon(G,K) + \sum_{L\in \nn(\L,H) \setminus \nn(\L,H,N)} \varepsilon(G,L).\] For each $L \not\ge N$, $\pi(\varepsilon(G,L)) = 0$, and for each $K\ge N$, $\pi(\varepsilon(G,K)) = \varepsilon(G/N, K/N)$. Thus, 
\begin{eqnarray*}
\pi(\varepsilon(\L,H)) &=& \sum_{K\in \nn(\L,H,N)} \varepsilon(G/N, K/N) = \sum_{K/N\in \nn(\pi(\L),N/N)} \varepsilon(G/N,K/N)\\ 
&=& \varepsilon(\pi(\L),N/N),
\end{eqnarray*} where the second equality follows by \lemref{thm:IdemDescent} and the third follows by \lemref{thm:IdempotentSum}.
\end{proof}

The primitivity of $\varepsilon(G,H)$ can be determined when $G$ is abelian.

\begin{Prop}[\cite{Jespers03} Corollary 2.1]\label{thm:AbelianIdempotents} The set $\{\varepsilon(G,H) \mid H\le G, $G/H$\; \text{is cyclic}\}$ is a complete set of primitive central idempotents in $\QG$ when $G$ is abelian. If $\varepsilon \in \QG$ is any idempotent, then $\varepsilon\in \Span_\Q\{\overline{H} \mid H \le G\}$. \end{Prop}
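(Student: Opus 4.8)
The plan is to work first over $\C$, where the idempotents are transparent, and then descend to $\Q$ by Galois theory. Since $G$ is abelian, $\mathrm{Irr}(G)$ consists of the linear characters $\chi\colon G\to\C^\times$, and the primitive central idempotents of $\CG$ are $e_\chi = \frac{1}{|G|}\sum_{g\in G}\chi(g^{-1})g$, with $\sum_\chi e_\chi = 1$. First I would record the elementary identity that for any $M\le G$,
\[\widehat{M}\,e_\chi = \Bigl(\tfrac{1}{|M|}\sum_{m\in M}\chi(m)\Bigr)e_\chi,\]
so that $\widehat{M}e_\chi = e_\chi$ if $M\le\ker\chi$ and $\widehat{M}e_\chi = 0$ otherwise, by orthogonality of characters. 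Substituting into $\varepsilon(G,H)=\prod_{M\in\M(G,H)}(\widehat{H}-\widehat{M})$ and splitting into the cases $H\not\le\ker\chi$, $H<\ker\chi$, and $H=\ker\chi$ shows that $\varepsilon(G,H)e_\chi$ equals $e_\chi$ exactly when $\ker\chi = H$ and equals $0$ otherwise. Expanding $\varepsilon(G,H)=\sum_\chi\varepsilon(G,H)e_\chi$ then gives the master formula
\[\varepsilon(G,H)=\sum_{\ker\chi = H}e_\chi.\]

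From this formula everything follows. The idempotent $\varepsilon(G,H)$ is nonzero iff some character has kernel exactly $H$, i.e. iff $G/H$ admits a faithful linear character, which happens iff $G/H$ is cyclic (a finite abelian group embeds in $\C^\times$ iff it is cyclic). When $G/H\cong Z_d$, the characters with kernel $H$ are precisely the faithful characters of $Z_d$, and these form a single orbit under $\mathrm{Gal}(\Q(\zeta_d)/\Q)$ (explicitly $\chi\mapsto\chi^a$ for $\gcd(a,d)=1$); since $\sigma(e_\chi)=e_{\chi^\sigma}$, the orbit-sum $\varepsilon(G,H)$ is Galois-fixed, hence lies in $\QG$, and being the sum over a single orbit it is one of the primitive central idempotents of $\QG$ (the rational primitive central idempotents being exactly the Galois-orbit sums of the $e_\chi$, which is the idempotent-level statement of the Perlis--Walker decomposition). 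For completeness I would note that the kernels partition $\mathrm{Irr}(G)$, so distinct admissible $H$ give orthogonal idempotents (consistent with \lemref{lem:idemidem}) and $\sum_{G/H\text{ cyclic}}\varepsilon(G,H)=\sum_\chi e_\chi = 1$; hence $\{\varepsilon(G,H)\mid G/H\text{ cyclic}\}$ is the full set of primitive central idempotents.

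For the second assertion, since $\QG$ is commutative every idempotent is central and is therefore a sum of the primitive central idempotents $\varepsilon(G,H)$ just described. Each $\varepsilon(G,H)$ is a $\Q$-polynomial in the $\widehat{M}=\frac{1}{|M|}\overline{M}$, and $\widehat{M}\widehat{M'}=\widehat{MM'}$ is again a scalar multiple of a subgroup sum, so $\varepsilon(G,H)\in\Span_\Q\{\overline{H}\mid H\le G\}$; thus so is any idempotent. The hard part is the middle step --- certifying that $\varepsilon(G,H)$ is primitive in all of $\QG$ and not merely in the lattice Schur ring $S(\L)$ (where primitivity is already given by \thmref{thm:LatticeIdem}). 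This rests entirely on the fact that characters sharing a kernel constitute one Galois orbit, i.e. that each nonzero $\varepsilon(G,H)$ carves out a single Wedderburn component $\Q(\zeta_d)$; this is precisely the input provided by Perlis--Walker, and it is where rationality and primitivity genuinely enter.
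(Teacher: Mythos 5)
Your proposal is correct, but it does not follow the paper's route, because the paper has no proof of this proposition at all: it is imported by citation from Jespers--Leal--Paques (\cite{Jespers03}, Corollary 2.1; see also Chapter VII of \cite{Jespers96}), and those sources establish the formula by deliberately \emph{character-free} methods, which is the entire point of that line of literature. Your argument is instead the classical character-theoretic one that the paper's introduction mentions only in passing (``averaging Galois conjugates'') and then sets aside: compute in $\CG$ with $e_\chi=\frac{1}{|G|}\sum_{g\in G}\chi(g^{-1})g$, observe $\widehat{M}e_\chi=e_\chi$ or $0$ according as $M\le\ker\chi$ or not, deduce the master formula $\varepsilon(G,H)=\sum_{\ker\chi=H}e_\chi$ by the three-case analysis, and descend to $\Q$ using the standard fact that the primitive central idempotents of $\QG$ are exactly the Galois-orbit sums of the $e_\chi$. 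All steps check out: nonvanishing of $\varepsilon(G,H)$ is correctly identified with $G/H$ cyclic (existence of a faithful linear character), the single-orbit claim is right since the faithful characters of $Z_d$ are permuted transitively by $(\Z/d\Z)^\times$ and the restriction map from the full Galois group is surjective, and the span statement follows since $\widehat{M}\widehat{M'}=\widehat{MM'}$ keeps everything inside $\Span_\Q\{\overline{H}\mid H\le G\}$. What your approach buys is a self-contained, conceptually transparent proof within this paper's framework; as a bonus, your master formula instantly re-derives \lemref{lem:idemidem} and \lemref{thm:IdempotentSum} for abelian $G$, and makes the contrast with \thmref{thm:LatticeIdem} precise (primitivity in $\QG$ versus in $S(\L)$ is exactly the single-orbit statement). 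What the character-free approach of the cited papers buys is what this paper actually needs: formulas and proofs that avoid character tables, which is what makes them computationally practical and extensible to nilpotent and abelian-by-supersolvable groups, and which is the spirit in which the present paper extends them to Schur rings.
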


\begin{Cor}\label{thm:CyclicIdempotents} The set $\{\varepsilon(G,H) \mid H\le G\}$ is a complete set of primitive central idempotents in $\QG$ when $G$ is cyclic. \end{Cor}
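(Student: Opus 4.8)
The plan is to deduce this statement directly from \propref{thm:AbelianIdempotents}. Since every cyclic group is abelian, that proposition applies to $G$ and tells us that the complete set of primitive central idempotents of $\QG$ is exactly \[\{\varepsilon(G,H) \mid H\le G,\; G/H \;\text{is cyclic}\}.\] The only thing requiring verification, then, is that the indexing condition ``$G/H$ is cyclic'' imposes no restriction in the present setting.

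First I would invoke the elementary group-theoretic fact that every quotient of a cyclic group is again cyclic. Hence for a cyclic $G$ the clause ``$G/H$ is cyclic'' holds for \emph{every} subgroup $H\le G$, so the two indexing sets coincide: \[\{\varepsilon(G,H) \mid H\le G,\; G/H \;\text{is cyclic}\} = \{\varepsilon(G,H) \mid H\le G\}.\] This already identifies the candidate set and shows it is complete, orthogonal, and consists of primitive central idempotents, since all of these properties are inherited verbatim from \propref{thm:AbelianIdempotents}.

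To be fully rigorous I would also confirm that no idempotent in the claimed set is zero, so that the enumeration by subgroups is genuinely one-to-one. Here I would apply \lemref{lem:CycIdem} with $\L$ taken to be the entire lattice of subgroups of $G$. Because $G$ is cyclic (hence abelian), every subgroup is normal, so $\L$ is a legitimate semi-lattice of normal subgroups, and \lemref{lem:CycIdem} guarantees $\varepsilon(\L,H)=\varepsilon(G,H)\neq 0$ for all $H\in\L$. Thus each subgroup contributes a distinct nonzero primitive central idempotent.

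The main obstacle is that there essentially is none: this is a clean specialization of the abelian case, and the entire content is the observation that quotients of cyclic groups remain cyclic together with the nonvanishing supplied by \lemref{lem:CycIdem}. The only care needed is to state explicitly why the cyclic hypothesis makes the ``$G/H$ cyclic'' restriction vacuous, rather than leaving it implicit.
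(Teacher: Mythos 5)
Your proposal is correct and is essentially the paper's own (implicit) argument: the corollary is stated without proof precisely because, for cyclic $G$, every quotient $G/H$ is again cyclic, so the indexing condition in \propref{thm:AbelianIdempotents} is vacuous. Your extra appeal to \lemref{lem:CycIdem} for nonvanishing is harmless but redundant, since \propref{thm:AbelianIdempotents} already asserts that these elements are primitive central idempotents, hence nonzero.
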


Removing zero idempotents if necessary, \lemref{thm:IdempotentSum} gives a decomposition of $\varepsilon(\L,H)$ into primitive idempotents in $\QG$ when $G$ is abelian.

Let $G$ be an abelian group. Suppose that $H\le G$ but $G/H$ is not cyclic. By \propref{thm:AbelianIdempotents}, $\varepsilon(G,H)$ is not a primitive idempotent but $\varepsilon(G,H)\varepsilon(G,K) = 0$ for all $G/K$ cyclic. Thus, $\varepsilon(G,H) = 0$, that is, if $G$ is abelian, $\varepsilon(G,H) \neq 0$ if and only if $\varepsilon(G,H)$ is primitive if and only if $G/H$ is cyclic.


\section{Primitive Idempotents of Schur Rings over Cyclic Groups}\label{sec:IdemSchur}

Let $S$ be a Schur ring over $\FG$, for some finite group $G$, not necessarily abelian. Let $H, K$ be normal $S$-subgroups of $G$. Then $\overline{H}\cdot\overline{K} = |H\cap K|\overline{HK} \in S$ and $\overline{H}\circ \overline{K} = \overline{H\cap K} \in S$. Thus, the collection of all normal $S$-subgroups $\L$ forms a lattice of normal subgroups of $G$. As shown above, associated to this lattice is a complete set of idempotents in $\FG$. Let $\varepsilon(S,H) = \varepsilon(\L,H)$. Since $\L$ is a lattice, $S(\L)$ is a lattice Schur ring contained in $S$ and is maximal with respect to being the largest lattice subring in $S$. Furthermore, $S(\L) = \Span_F\{\varepsilon(S,H)\mid \overline{H}\}$, and hence contains many of the central idempotents of $S$. Under some conditions, $S(\L)$ contains all the central idempotents of $S$, for example when $S = S(\L)$. We will see in this section that if $G = Z_n$ and $F=\Q$ then $S(\L)$ contains all the idempotents of $S$. 

Let $n$ be a positive integer with prime factorization given as \[n = \prod_{i=1}^r p_i^{a_i},\] where each $p_i$ is a distinct prime. Set \[\lambda(n) = (-1)^{\sum_{i=1}^r a_i},\] and \[\text{Id}(n) = n.\] It is elementary to check that $\lambda$ and $\text{Id}$ are multiplicative functions\footnote[3]{A function $f : \Z^+ \to \R$ is multiplicative if $f(1) = 1$ and $f(mn) = f(m)f(n)$ whenever $\gcd(m,n) = 1$.}. Let $\beta$ be the Dirichlet convolution of $\lambda$ and $\text{Id}$, that is, 
\[\beta(n) = (\lambda\;\sharp\;\text{Id})(n) = \sum_{d\mid n}\lambda(d)(n/d).\] $\beta$ is the alternating-sum-of-divisors function. Since the convolution of multiplicative functions is multiplicative, we have that $\beta$ is also a multiplicative function.  A detailed treatment of $\beta$ can be found in \cite{Toth}.

Let $G = Z_n$ be a cyclic group of order $n$. For each divisor $d\mid n$, let $L_d$ be the set of elements of order $d$ in $G$. Since $G$ has a unique subgroup of order $d$, which is necessarily cyclic, we will refer to this subgroup as $G_d$. Thus, $L_d$ is the set of generators of $G_d$ and is referred to as the \emph{$d$th layer} of $G$. 

Consider the expansion 
\begin{eqnarray}
\varepsilon(G,1) &=& \prod_{i=1}^r (1-\widehat{G_{p_i}}) \nonumber\\ 
&=& \label{eq:beta1}1 - \sum_{i \le r} \widehat{G_{p_i}} + \sum_{i<j\le r} \widehat{G_{p_ip_j}} - \sum_{i<j<k\le r} \widehat{G_{p_ip_jp_k}} + \ldots \pm \widehat{G_m},\\
&=& \label{eq:beta2} \sum_{d\mid m} c_d\overline{L_d},
\end{eqnarray} where $m = \prod_{i=1}^r p_i$. Let $a\mid m$. By comparing coefficients in \eqref{eq:beta1} and \eqref{eq:beta2}, we have 
\begin{eqnarray}
c_a &=& \sum_{a\mid d\mid m} \dfrac{\lambda(d)}{d} = \lambda(a)\sum_{a\mid d\mid m} \dfrac{\lambda(d/a)}{d}\nonumber\\
&=&  \dfrac{\lambda(a)}{m}\sum_{a\mid d\mid m} \lambda(d/a)(m/d) = \dfrac{\lambda(a)}{m}\sum_{d'\mid (m/a)} \lambda(d')((m/a)/d')\nonumber\\
&=&\label{eq:beta3} \dfrac{\lambda(a)\beta(m/a)}{m}.
\end{eqnarray}  Also $c_a=0$ for any  $a\nmid m$. 

Next, let $H \trianglelefteq G$. Then for all $h\in H$, $h\varepsilon(G,H) = \varepsilon(G,H)$. Thus, the coefficients of $\varepsilon(G,H)$ are constant over cosets of $H$. Let $\pi : G \to G/H$ be the natural quotient map. Then, as seen above, $\varphi(\varepsilon(G,H)) = \varepsilon(G/H,H/H)$. Let $a\mid n$ and let $g\in G$ be an element of order $a$. Then $\varphi(g) \in G/H$ is an element of order $a' = \dfrac{a}{\gcd(a,|H|)}$.  Finally, let $c_a$ be the coefficient of $g$ in $\varepsilon(G,H)$, let $c'_a$ be the coefficient of $\varphi(g)$ in $\varepsilon(G/H, H/H)$, and let $m'$ be the product of distinct prime divisors of $n/|H|$. Thus, by \eqref{eq:beta3},
\begin{equation} c_a = \dfrac{1}{|H|}c'_a = \dfrac{1}{|H|}\left(\dfrac{\lambda(a')\beta(m'/a')}{m'}\right)\end{equation}

Thus, combining the above formula with \lemref{thm:IdempotentSum}, it is possible to compute the coefficients of $\varepsilon(\L,H)$ for any lattice of subgroups of $G = Z_n$.

Now, in a Schur ring, \propref{lem:sameCoeff} applies and by examining coefficients of $\varepsilon(G,1)$ certain $S$-subgroups can be identified. 

\begin{Lem}\label{lem:ScycIdemTrivial} Let $S$ be a Schur ring over $\QG$ where $G$ is a finite cyclic group. If $\varepsilon(G,1) \in S$, then $\overline{G_p}\in S$ for all $p\bmid |G|$. \end{Lem}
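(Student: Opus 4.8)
The plan is to read off the support and coefficients of $\varepsilon(G,1)$ from the formula derived above, namely $\varepsilon(G,1) = \sum_{d \mid m} c_d \overline{L_d}$ with $c_d = \lambda(d)\beta(m/d)/m$ for $d\mid m$ (and $c_d = 0$ otherwise), where $m$ is the radical of $n = |G|$. Since $\beta(k) = \prod_{q\mid k}(q-1) > 0$ for squarefree $k$, the sign of $c_d$ is exactly $\lambda(d) = (-1)^{\omega(d)}$ and $|c_d| = \beta(m)/(\beta(d)\,m)$. The whole argument then runs by induction on the number $r$ of distinct prime divisors of $n$, extracting the smallest prime and peeling it off. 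Throughout, the two workhorses are \propref{lem:sameCoeff}, which says that each coefficient level set $\{g : (\varepsilon(G,1))_g = c\}$ is an $S$-set, and \propref{prop:Ssubgroup}, which turns an $S$-set $C$ into the $S$-subgroup $\langle C\rangle$.

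First I would isolate the smallest prime $p_1$. Among the $d\mid m$ with $c_d < 0$ (that is, $\omega(d)$ odd), the coefficient $c_d$ is most negative when $\beta(d)$ is smallest, and a short check shows $\beta(d) = \prod_{q\mid d}(q-1)$ is uniquely minimized over odd $\omega(d)$ at $d = p_1$ (three or more prime factors force a strictly larger product, and no other single prime $q$ has $q-1 \le p_1 - 1$). Hence the most negative coefficient of $\varepsilon(G,1)$ occurs on $L_{p_1}$ and nowhere else, so $L_{p_1}$ is an $S$-set and $G_{p_1} = \langle L_{p_1}\rangle$ is an $S$-subgroup, giving $\overline{G_{p_1}}\in S$.

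The naive next move---passing to $G/G_{p_1}$---fails, because $\varepsilon(G,1)$ is annihilated by the quotient map (it is orthogonal to $\widehat{G_{p_1}}$). Instead I would descend to the subgroup $G_{m'}$ of order $m' = m/p_1$. The key is a second coefficient computation: setting $e = m/d$, the equation $c_d = c_{m'}$ becomes $(-1)^r\lambda(e)\beta(e) = (-1)^{r-1}(p_1-1)$, forcing $\omega(e)$ odd and $\beta(e) = p_1 - 1$, and minimality of $p_1$ again forces $e = p_1$, i.e. $d = m'$. Thus $c_{m'}$ is attained only on $L_{m'}$, so $L_{m'}$ is an $S$-set and $G_{m'} = \langle L_{m'}\rangle$ is an $S$-subgroup. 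Now $S_{m'} := S\cap\Q[G_{m'}]$ is a Schur ring over $G_{m'}\cong Z_{m'}$ by \propref{thm:circleProduct} (it is closed under $*$ and $\circ$ and contains $1$ and $\overline{G_{m'}}$), and the Hadamard restriction $\overline{G_{m'}}\circ\varepsilon(G,1) = \sum_{d\mid m'} c_d\overline{L_d}$ lies in $S_{m'}$. The final computation, using $\beta(m/d) = (p_1-1)\beta(m'/d)$ for $d\mid m'$, shows $\overline{G_{m'}}\circ\varepsilon(G,1) = \tfrac{p_1-1}{p_1}\,\varepsilon(G_{m'},1)$; as we work over $\Q$ this produces $\varepsilon(G_{m'},1)\in S_{m'}$.

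With $\varepsilon(G_{m'},1)\in S_{m'}$ and $G_{m'}$ carrying only the $r-1$ primes $p_2,\dots,p_r$, the inductive hypothesis gives $\overline{G_p}\in S_{m'}\subseteq S$ for every prime $p\mid m'$, while the first step already gave $\overline{G_{p_1}}\in S$; together these cover every prime dividing $n$, and the base case $r = 1$ is exactly the isolation-of-$p_1$ step. I expect the main obstacle to be precisely the failure of the quotient approach and the consequent need to locate $G_{m'}$ as an $S$-subgroup: the argument hinges on the two uniqueness-of-coefficient facts, and verifying them---especially ruling out the accidental equalities $\beta(d) = \beta(m')$ created by the factor $(2-1) = 1$ when $p_1 = 2$, which survive only with the opposite sign and hence do not collide in value---is the delicate combinatorial point.
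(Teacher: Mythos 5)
Your proof is correct, but it runs on a different induction than the paper's. The paper fixes $G$ and inducts on the primes dividing $m$ in \emph{increasing order}: for each prime $p$, the level set of the value $\lambda(p)\beta(m/p)/m$ in $\varepsilon(G,1)$ contains $L_p$ together possibly with layers $L_d$ all of whose prime factors are strictly smaller than $p$ (this is where the paper's $\beta(p)=\beta(d)$ analysis and its $d=2p$ sign argument occur); by that stage each such $\overline{L_d}$ already lies in $S$ (by inclusion--exclusion from the $\overline{G_q}$ with $q<p$ obtained earlier), so the paper subtracts these layers from $\varepsilon(G,1)$ one at a time, reapplying \propref{lem:sameCoeff} after each subtraction, until the level set shrinks to exactly $L_p$ --- everything happens inside the fixed Schur ring $S$ over the fixed group $G$. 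You instead induct on the number $r$ of distinct primes: after isolating $p_1$ exactly as in the paper's first step, you prove a second uniqueness fact the paper never needs --- that the value $c_{m'}$ is attained only on $L_{m'}$ --- to conclude $\overline{G_{m'}}\in S$, and you then recurse inside $S\cap \Q[G_{m'}]$ (which is indeed a Schur ring over $G_{m'}$, correctly justified via \propref{thm:circleProduct}) using the identity $\overline{G_{m'}}\circ\varepsilon(G,1)=\tfrac{p_1-1}{p_1}\,\varepsilon(G_{m'},1)$, which follows from $\beta(m/d)=(p_1-1)\beta(m'/d)$ for $d\mid m'$. Your route trades the paper's ``peel off finitely many layers'' loop for a cleaner recursion, at the cost of the extra uniqueness computation and the restriction-to-an-$S$-subgroup step; both of your uniqueness claims check out (for odd $\omega(d)$, $\beta(d)=\prod_{q\mid d}(q-1)$ is uniquely minimized at $d=p_1$, since three or more factors force a product of at least $8(p_1-1)>p_1-1$), and the sign subtlety at $p_1=2$ that you flag is real but is resolved exactly as the paper resolves its $d=2p$ case, by the forced sign $\lambda(e)=-1$. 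One additional virtue of your argument worth noting: the identity expressing the Hadamard restriction of $\varepsilon(G,1)$ to an $S$-subgroup as a scalar multiple of the corresponding idempotent of the subgroup is a structural fact that could be reused elsewhere, whereas the paper's peeling procedure is specific to this lemma.
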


\begin{proof} 

Suppose that $|G| = n = \prod_{i=1}^r p_i^{a_i}$ is a prime factorization. Let $m = \prod_{i=1}^r p_i$. By \propref{prop:Ssubgroup}, if $\overline{L_p}\in S$ then $\overline{G_p}\in S$. So it suffices to show that $\overline{L_p}\in S$ for all $p\mid n$. 

As noted in \eqref{eq:beta3}, the coefficient of $\overline{L_p}$ in $\varepsilon(G,1)$ is $\dfrac{\lambda(p)\beta(m/p)}{m}$ for all $p\mid m$. Suppose that for some other divisor $d\mid m$, 
\begin{equation}\label{eq:beta}\dfrac{\lambda(p)\beta(m/p)}{m} = \dfrac{\lambda(d)\beta(m/d)}{m}.\end{equation} Then $\beta(m/p) = \beta(m/d)$, since $\beta(k) > 0$ for all positive $k$. Since $m$ is square-free and $\beta$ is multiplicative, this implies that $\beta(p) = \beta(d)$. 

Suppose $d = \prod_{i=1}^s q_i$, where each $q_i$ is a prime divisor of $m$. Then $\beta(p) = p-1$ and 
\[\beta(d) = \beta\left(\prod_{i=1}^s q_i\right) = \prod_{i=1}^s (q_i-1).\] Now, if $p\mid d$, then $q_k= p$ for some $1\le k\le s$ and $\prod_{i=1}^s(q_i-1) = q_k-1 \Rightarrow (q_1-1)\ldots \widehat{(q_k-1)}\ldots(q_s-1) = 1$, where here \; $\widehat{}$\; denotes an omitted factor. That implies that $d=p$ or $d=2p$. But if $d=2p$, then $\lambda(d) = 1$, while $\lambda(p) = -1$, which contradicts \eqref{eq:beta}. Therefore, we may assume that $\gcd(p,d) = 1$. Furthermore, since $\prod_{i=1}^s(q_i-1) = p-1$, we know that $q_i-1 < p-1 \Rightarrow q_i < p$ for all primes dividing $d$.

First, let $p$ be the smallest prime dividing $m$. Let $K$ be the subset of $G$ consisting of those elements whose coefficient in $\varepsilon(G,1)$ is equal to $\lambda(p)\beta(m/p)/m$. As above, $L_p \subseteq K$. On the other hand, if any other layer $L_d\subseteq K$, then this implies that $\beta(d) = \beta(p)$, but by the previous paragraph all the prime divisors of $d$ are smaller than $p$, which is a contradiction. Therefore, $K = L_p$, which implies that $\overline{L_p} \in S$. For induction, suppose that if $p$ is a prime divisor of $m$ which is smaller than $k$ then $\overline{L_p}\in S$. Let $p$ be the smallest prime divisor of $m$ which is greater than or equal to $k$. Again, let $K$ be the subset of $G$ whose coefficient in $\varepsilon(G,1)$ is equal to $\lambda(p)\beta(m/p)/m$. Clearly, $L_p\subseteq K$. If $L_d\subseteq K$ for some other divisor $d$ of $m$, then $d = \prod_{i=1}^s q_i$, where $q_i$ is a prime divisor of $m$ strictly smaller than $p$. By our induction hypothesis, $\overline{L_{q_i}} \in S$ for all divisors of $d$. Furthermore, $\overline{G_{q_i}} \in S$ for all $i$ and hence $\overline{G_{d'}} \in S$ for all $d'\mid d$. Taking differences, this implies that $\overline{L_d}\in S$. So instead, we may set $K$ to be the subset of $G$ whose coefficient in $\varepsilon(G,1)- \frac{\lambda(p)\beta(m/p)}{m}\overline{L_d}$ is equal to $\lambda(p)\beta(m/p)/m$. Repeating this process finitely many times if necessary, eventually we will have that $K = L_p$, which implies that $\overline{L_p}\in S$. Therefore, by induction, $\overline{L_p}\in S$ for all $p\bmid |G|$. This implies that $G_p = \langle L_p\rangle$ is an $S$-subgroup.
\end{proof}

\begin{Lem}\label{lem:ScycIdemTrivial2} Let $S$ be a Schur ring over a $\QG$ where $G$ is a cyclic group. Let $H\trianglelefteq G$. If $\varepsilon(G,H) \in S$, then $\overline{M} \in S$ for all $M\in \M(G,H)$. \end{Lem}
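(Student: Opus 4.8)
The plan is to reduce the statement to the case $H=1$, which is exactly \lemref{lem:ScycIdemTrivial}, by passing to the quotient $G/H$. The first task is to show that $\overline{H}\in S$, so that the quotient map $\pi:G\to G/H$ carries $S$ to a Schur ring and can be used both to descend $\varepsilon(G,H)$ and to lift subgroups back. Since $\widehat{H}\varepsilon(G,H)=\varepsilon(G,H)$ by \lemref{lem:idemidem}, every element of $H$ fixes $\varepsilon(G,H)$, so $H\subseteq \Stab(\varepsilon(G,H))$, and by \propref{prop:Stab} this stabilizer is an $S$-subgroup. I would then argue that it equals $H$ exactly. If some $g_0\notin H$ fixed $\varepsilon(G,H)$, then $H'=\langle H,g_0\rangle$ would satisfy $\widehat{H'}\varepsilon(G,H)=\varepsilon(G,H)$ by averaging; choosing a cover $M$ of $H$ with $H<M\le H'$ gives $\widehat{M}\varepsilon(G,H)=0$ by \lemref{lem:idemidem}, and since $\widehat{H'}\widehat{M}=\widehat{H'}$ this forces $\varepsilon(G,H)=\widehat{H'}\varepsilon(G,H)=0$, contradicting \lemref{lem:CycIdem}. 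Hence $\Stab(\varepsilon(G,H))=H$ and $\overline{H}\in S$.

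With $\overline{H}\in S$, \corref{cor:HadamardMap} shows that $\pi(S)$ is a Schur ring over $G/H$, which is again cyclic. Applying \corref{cor:IdemDescent} with $N=H\in\nn(G,H)$ gives $\pi(\varepsilon(G,H))=\varepsilon(G/H,H/H)=\varepsilon(G/H,1)$, and this element lies in $\pi(S)$. Now \lemref{lem:ScycIdemTrivial}, applied to the Schur ring $\pi(S)$ over the cyclic group $G/H$, yields $\overline{(G/H)_p}\in\pi(S)$ for every prime $p$ dividing $[G:H]$. Since $G/H$ is cyclic, its minimal subgroups are precisely the $(G/H)_p$, and these are exactly the images $M/H$ of the covers $M\in\M(G,H)$; thus each cover $M$ satisfies $\overline{M/H}\in\pi(S)$.

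It remains to lift these back to $G$. Given a cover $M$, choose $\alpha\in S$ with $\pi(\alpha)=\overline{M/H}$, and write $\alpha=\sum_g a_g g$. The condition $\pi(\alpha)=\overline{M/H}$ says that the coset sums $\sum_{g\in \pi^{-1}(g')}a_g$ equal $1$ when $g'\in M/H$ and $0$ otherwise. Since $g\overline{H}=\overline{gH}$, it follows that $\alpha\overline{H}=\sum_{g'}\big(\sum_{g\in\pi^{-1}(g')}a_g\big)\overline{\pi^{-1}(g')}=\sum_{gH\subseteq M}\overline{gH}=\overline{M}$. As $\alpha,\overline{H}\in S$, we conclude $\overline{M}=\alpha\overline{H}\in S$ for every $M\in\M(G,H)$, as desired.

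The main obstacle is the first paragraph: pinning down that the stabilizer of $\varepsilon(G,H)$ is exactly $H$, and not some larger subgroup. This is precisely what guarantees that $\overline{H}$ itself, rather than some overgroup, is available in $S$, and hence that quotienting by $H$ preserves all the covers of $H$ we need. Once this is secured, the descent through $\pi$ and the bar-lifting identity $\overline{M}=\alpha\overline{H}$ are routine.
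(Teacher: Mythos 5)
Your proposal is correct and follows essentially the same route as the paper: the paper's proof likewise observes that $\Stab(\varepsilon(G,H)) = H$ gives $\overline{H}\in S$ via \propref{prop:Stab}, then cites \corref{cor:HadamardMap}, \corref{cor:IdemDescent}, and \lemref{lem:ScycIdemTrivial} to conclude. Your write-up simply supplies the details the paper leaves implicit, namely the verification that the stabilizer is exactly $H$ (which also follows directly from \lemref{lem:idemidem}(2) and \lemref{lem:CycIdem}) and the lifting identity $\overline{M}=\alpha\overline{H}$.
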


\begin{proof}
Now, $\Stab(\varepsilon(G,H)) = H$, which implies $\overline{H}\in S$, by \lemref{prop:Stab}. Therefore, the result follows from  \corref{cor:HadamardMap}, \corref{cor:IdemDescent}, and \lemref{lem:ScycIdemTrivial}.
\end{proof}

\begin{Thm}\label{thm:ScycIdem} Let $G$ be a finite cyclic group and let $S$ be a Schur ring over $\QG$. Then $\varepsilon(S,H)$ is primitive for all $\overline{H} \in S$. In particular, $\{\varepsilon(S,H)\mid \overline{H}\in S\}$ is a complete set of primitive idempotents in $S$.
\end{Thm}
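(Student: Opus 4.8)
The plan is to prove the statement by induction on $|G|$, reducing the primitivity of a general $\varepsilon(S,H)$ to the case $H=1$ and then splitting that case along the Leung--Man classification (\thmref{thm:LeungMan}). Since $G$ is abelian, $S$ is a commutative semisimple $\Q$-algebra, so every idempotent is central and $\varepsilon(S,H)$ is primitive exactly when the ideal $\varepsilon(S,H)S$ is a field. By \lemref{lem:CycIdem} the idempotents $\varepsilon(S,H)$ with $\overline{H}\in S$ are nonzero, by \lemref{lem:idemidem} they are pairwise orthogonal, and by \propref{prop:S=T} they sum to $1$; hence once each is shown primitive they form the complete set of primitive idempotents, which is the asserted conclusion.

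First I would dispose of the reduction $H\neq 1\Rightarrow H=1$. Let $\L$ be the lattice of $S$-subgroups and $\pi\colon G\to G/H$ the quotient map. Since $\overline{H}\in S$, $\pi(S)$ is a Schur ring over the smaller cyclic group $G/H$ by \corref{cor:HadamardMap}, and a short computation (using $\alpha\cdot\overline{H}=\overline{K}$ for a suitable preimage) shows $\overline{K/H}\in\pi(S)$ iff $\overline{K}\in S$ for $H\le K$, so the $\pi(S)$-subgroup lattice is $\pi(\L)$. Because $\widehat{H}\varepsilon(S,H)=\varepsilon(S,H)$ by \lemref{lem:idemidem}, the ideal $\varepsilon(S,H)S$ lies in $\widehat{H}\QG$, on which $\pi$ is injective; thus $\pi$ restricts to an algebra isomorphism $\varepsilon(S,H)S\to \varepsilon(\pi(S),1)\,\pi(S)$, where the target idempotent is $\pi(\varepsilon(S,H))=\varepsilon(\pi(\L),1)$ by \corref{cor:IdemDescent} (with $N=H$). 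By the inductive hypothesis $\varepsilon(\pi(S),1)$ is primitive in $\pi(S)$, so its field ideal pulls back to $\varepsilon(S,H)S$, and $\varepsilon(S,H)$ is primitive.

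It then remains to prove $\varepsilon(S,1)$ is primitive, where I would invoke \thmref{thm:LeungMan}. If $S$ is trivial, or more generally a lattice Schur ring, primitivity is immediate from \thmref{thm:LatticeIdem}. If $S$ is an orbit ring, every subgroup of the cyclic group $G$ is characteristic, hence an $S$-subgroup, so the atoms of $\L$ are exactly the $G_p$ and $\varepsilon(S,1)=\prod_{p}(1-\widehat{G_p})=\varepsilon(G,1)$, which is already primitive in $\QG$ by \corref{thm:CyclicIdempotents} and therefore in $S$. If $S=R\cdot T$ is a dot product over $G=A\times B$ with $\gcd(|A|,|B|)=1$, then the $S$-subgroups are the products of an $R$-subgroup and a $T$-subgroup, whence $\varepsilon(S,1)=\varepsilon(R,1)\otimes\varepsilon(T,1)$ and $\varepsilon(S,1)S=\bigl(\varepsilon(R,1)R\bigr)\otimes_\Q\bigl(\varepsilon(T,1)T\bigr)$; by induction both tensor factors are fields embedding into cyclotomic fields of conductor dividing $|A|$ and $|B|$ respectively, and since these conductors are coprime the two fields are linearly disjoint over $\Q$, so their tensor product is again a field. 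Finally, if $S=R\wedge T$ is a wedge product, with $R$ a Schur ring over a proper subgroup $A<G$ and $T$ a Schur ring over $G/C$ where $1<C\le A$, $\overline{C}\in S$, $S=R+\pi^{-1}(T)$ and $\pi^{-1}(T)=\widehat{C}S$, I would first note $\widehat{C}\varepsilon(S,1)=0$ by \lemref{lem:idemidem}, so $\varepsilon(S,1)\in(1-\widehat{C})S=(1-\widehat{C})R\subseteq R$ and $\varepsilon(S,1)S=\varepsilon(S,1)R$. The key point is that every atom $M$ of $\L$ lies in $A$: an element of $M$ outside $A$ would lie in an $S$-class that is a union of $C$-cosets, forcing $C\le M$ and hence $M=C\le A$, a contradiction. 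Thus the atoms of $\L$ coincide with those of the $R$-subgroup lattice and $\varepsilon(S,1)=\varepsilon(R,1)$, primitive in $R$ by induction; therefore $\varepsilon(S,1)R$ is a field and $\varepsilon(S,1)$ is primitive in $S$.

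I expect the wedge product to be the main obstacle, precisely because it requires identifying $\varepsilon(S,1)$ with an idempotent of the proper subring $R$ and controlling the atoms of $\L$ through the coset structure of the wedge. The coefficient computations underlying \lemref{lem:ScycIdemTrivial} and \lemref{lem:ScycIdemTrivial2}, which pin down exactly which layers $\overline{L_p}$ and subgroups $\overline{G_p}$ are forced into $S$, are the arithmetic engine behind the orbit and base cases and behind the claim that an atom cannot escape $A$. A secondary delicate point is the dot-product step, where primitivity of $\varepsilon(S,1)$ does not follow formally from primitivity of its factors and genuinely relies on the linear disjointness of cyclotomic fields of coprime conductor.
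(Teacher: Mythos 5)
Your proof is correct, but the heart of it takes a genuinely different route from the paper's. The outer skeleton coincides: induction on $|G|$ and reduction of $\varepsilon(S,H)$ to $\varepsilon(\pi(S),1)$ via the quotient by $H$ (the paper does this through \lemref{thm:IdempotentSum} and \lemref{thm:IdemDescent}; your injectivity-of-$\pi$-on-$\widehat{H}\QG$ argument is an equivalent variant). But for the crucial case $\varepsilon(S,1)$ you run a case analysis over the Leung--Man classification and identify the idempotent explicitly in each branch: $\varepsilon(G,1)$ for orbit rings, $\varepsilon(R,1)\otimes\varepsilon(T,1)$ for dot products (primitivity resting on linear disjointness of subfields of cyclotomic fields of coprime conductor), and $\varepsilon(R,1)$ for wedge products (via your correct observation that every atom of the $S$-subgroup lattice lies inside the inner subgroup, so $\varepsilon(S,1)S=\varepsilon(R,1)R$). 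The paper instead argues by contradiction: it supposes $\varepsilon(S,1)=\varepsilon_1+\varepsilon_2$ splits in $S$, tracks which primitive idempotents $\varepsilon(G,K)$ of $\QG$ are involved in each summand, and reaches a contradiction using \propref{prop:Stab}, the Hadamard product, distributivity of the cyclic subgroup lattice, and the coefficient analysis of \lemref{lem:ScycIdemTrivial}; Leung--Man enters only indirectly, through \thmref{thm:CayleySchur}, which is needed because the paper quotients by subgroups $H\cap K$ that are \emph{not} $S$-subgroups, where \corref{cor:HadamardMap} does not apply. What your route buys: you never need the arithmetic ($\beta$-function) lemmas \lemref{lem:ScycIdemTrivial} and \lemref{lem:ScycIdemTrivial2}, nor \thmref{thm:CayleySchur}, since you only ever quotient by $S$-subgroups. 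What it costs: you invoke the full structural strength of the classification plus some field theory, whereas the paper's combinatorial argument uses the classification only through the single fact that Cayley images of Schur rings over cyclic groups are Schur rings. One correction to your closing assessment: your proof does \emph{not} in fact depend on \lemref{lem:ScycIdemTrivial} or \lemref{lem:ScycIdemTrivial2}; the orbit case needs only that every subgroup of a cyclic group is characteristic, and the atom-containment claim in the wedge case is pure coset combinatorics, so those lemmas are not an ``engine'' behind any step you actually perform.
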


\begin{proof}
The proof is by induction on $|G|$. If $|G| = p$, a prime, then the lattice of $S$-subgroups is $\{1,G\}$, the entire lattice of subgroups. Thus, $\varepsilon(S,1) = \varepsilon(G,1)$ and $\varepsilon(S,G) = \widehat{G}$, which are primitive by \corref{thm:CyclicIdempotents}. Next, suppose that the result holds for all cyclic groups with order less than $n$. Let $G = Z_n$ and let $\overline{H}\in S$. Then consider $\varepsilon(S,H)$. By \lemref{thm:IdempotentSum} and \lemref{thm:IdemDescent}, if $\pi : G \to G/H$ is the quotient map, then $\varepsilon(S,H)$ is primitive if and only if $\varepsilon(\pi(S), H/H)$ is primitive, where the latter is primitive by our induction hypothesis. Thus, it suffices to prove the case for $\varepsilon(S,1)$. 

Suppose that \begin{equation}\label{eq:ScycIdem} \varepsilon(S,1) = \varepsilon_1 + \varepsilon_2\end{equation} decomposes as a sum of nonzero idempotents. By \lemref{thm:IdempotentSum}, $\varepsilon(S,1)$ is a sum of primitive idempotents of the form $\varepsilon(G,H)$, where $H$ does not contain a minimal $S$-subgroup. So, \eqref{eq:ScycIdem} partitions this collection of primitive idempotents. We may assume that $\varepsilon(G,1)$ is involved in $\varepsilon_1$. Suppose that $\varepsilon_1 = \varepsilon(G,1) \in S$. Then by \lemref{lem:ScycIdemTrivial}, $S$ contains all the minimal subgroups of $G$. In particular, $\varepsilon(S,1) = \varepsilon(G,1)$, by \lemref{thm:IdempotentSum}, and is primitive by \corref{thm:CyclicIdempotents}. So, we may assume that $\varepsilon_1$ involves some other primitive idempotent $\varepsilon(G,H)$, with $H\neq 1$. 

Next, suppose that $\varepsilon(G,K)$ is involved in $\varepsilon_2$ and suppose that $H\cap K \neq 1$. Let $\pi : G \to G/(H\cap K)$ be the quotient map.  Now, $H,K \in \nn(S,1)$, which implies that $H\cap K \in \nn(S,1)$. Then $\pi(\varepsilon(G,H)),\; \pi(\varepsilon(G,K)) \neq 0$, by \corref{cor:IdemDescent} and \lemref{lem:CycIdem}.  This means that $\pi(\varepsilon(S,1))$ is an imprimitive idempotent of $\pi(S)$. But $\pi(S)$ is a Schur ring by \thmref{thm:CayleySchur} and $\pi(\varepsilon(S,1)) = \varepsilon(\pi(S), 1)$ by \corref{cor:IdemDescent}. Thus, $\pi(\varepsilon(S,1))$ is primitive by our induction hypothesis, a contradiction.  Hence, $H\cap K = 1$ for all $\varepsilon(G,K)$ involved in $\varepsilon_2$. By this consideration, for all subgroups $1 <L \le K$, $\varepsilon(G,L)$ must be involved in $\varepsilon_2$ and for all subgroups $1< L \le H$, $\varepsilon(G,L)$ must be involved in $\varepsilon_1$. In particular, we may assume that $H$ and $K$ have prime order.  

Next, $\varepsilon(G,HK)$ cannot be involved in $\varepsilon_1$ since $HK\cap K \neq 1$ nor $\varepsilon_2$ since $HK\cap H \neq 1$. Thus, $\varepsilon(G,HK)$ is not involved in $\varepsilon(S,1)$, which implies that $HK$ contains a minimal $S$-subgroup. But the only nontrivial subgroups of $HK$ are $H$, $K$, and $HK$, by order considerations. Thus, $HK$ must be a minimal $S$-subgroup, that is, $\overline{HK} \in S$. 

If $\mathcal{K} = \{K_\alpha\mid \varepsilon(G,K_\alpha)\text{ is involved in } \varepsilon_2\}$ and $\bigcap \mathcal{K} = K \neq 1$, then $\Stab(\varepsilon_2) = K$, which implies that $\overline{K}\in S$. This contradicts \lemref{thm:IdempotentSum}, since $K\in \nn(S,1)$.  So, $\varepsilon_2$ must involve at least two distinct primitive idempotents $\varepsilon(G,K_1)$ and $\varepsilon(G,K_2)$ and we may assume that both $K_1$ and $K_2$ have prime orders. Using the previous argument, $\overline{HK_1}, \overline{HK_2}\in S$. But then $\overline{HK_1}\circ \overline{HK_2}  = \overline{H} \in S$, by the distributivity of the lattice of subgroups of $G$. But this contradicts \lemref{thm:IdempotentSum}. Therefore, $\varepsilon(S,1)$ is primitive in $S$.
\end{proof}

\begin{Cor}\label{cor:perlisSchur} Let $S$ be a Schur ring over $\QG$ and let $\varepsilon \in S$ be an idempotent, with $G$ cyclic. Then $\varepsilon\in \Span_\Q\{\overline{H} \mid \overline{H} \in S\}$.\end{Cor}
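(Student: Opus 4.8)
The plan is to obtain the corollary as an essentially immediate packaging of \thmref{thm:ScycIdem} and \propref{prop:S=T}; the genuine work has already been done in proving that the idempotents $\varepsilon(S,H)$ are primitive. First I would record that since $G$ is cyclic it is abelian, so $\QG$ and hence $S$ is commutative; in particular every idempotent of $S$ is automatically central. Let $\L$ be the lattice of $S$-subgroups of $G$, so that $\varepsilon(S,H)=\varepsilon(\L,H)$ for each $\overline{H}\in S$. By \thmref{thm:ScycIdem}, the family $\{\varepsilon(S,H)\mid\overline{H}\in S\}$ is a complete set of primitive (central) idempotents of $S$: they are pairwise orthogonal, sum to $1$, and decompose $S$ as $\bigoplus_{\overline{H}\in S}\varepsilon(S,H)S$ with each summand a field extension of $\Q$.

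Next I would decompose the given idempotent $\varepsilon$ along this splitting, using the standard fact recalled in \secref{sec:IdemGroup} that every idempotent of a semisimple algebra is the sum of the primitive central idempotents it involves. Concretely, $\varepsilon=\sum_{\overline{H}\in S}\varepsilon\,\varepsilon(S,H)$, and each $\varepsilon\,\varepsilon(S,H)$ is an idempotent of the field $\varepsilon(S,H)S$, hence equals either $0$ or $\varepsilon(S,H)$. Collecting the nonzero terms gives $\varepsilon=\sum_{H\in T}\varepsilon(S,H)$ for some set $T$ of $S$-subgroups, so in particular $\varepsilon\in\Span_\Q\{\varepsilon(S,H)\mid\overline{H}\in S\}$.

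Finally I would identify this span with the subspace named in the statement. Applying \propref{prop:S=T} to the lattice $\L$ of $S$-subgroups yields $\Span_\Q\{\varepsilon(S,H)\mid\overline{H}\in S\}=S(\L)=\Span_\Q\{\overline{H}\mid\overline{H}\in S\}$, whence $\varepsilon$ lies in the latter. I expect no real obstacle at this stage: the only content beyond bookkeeping is the primitivity of the $\varepsilon(S,H)$, which guarantees that the atoms indexing $\varepsilon$ cannot be refined outside the span of the $\overline{H}$, and this is precisely what \thmref{thm:ScycIdem} provides. The one point to verify carefully is that the idempotents indexed in the theorem match the $S$-subgroups $\overline{H}\in S$ appearing in the corollary, which is immediate from the identification $\varepsilon(S,H)=\varepsilon(\L,H)$.
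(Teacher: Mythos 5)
Your proof is correct and takes exactly the route the paper intends: the corollary is stated there without proof as an immediate consequence of \thmref{thm:ScycIdem}, and your write-up simply makes the implicit bookkeeping explicit (commutativity of $S$ forcing every idempotent to be central, decomposing $\varepsilon = \sum_{\overline{H}\in S}\varepsilon\,\varepsilon(S,H)$ against the complete set of primitive idempotents, and identifying $\Span_\Q\{\varepsilon(S,H)\mid \overline{H}\in S\}$ with $\Span_\Q\{\overline{H}\mid \overline{H}\in S\}$ via \propref{prop:S=T}). No gaps.
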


We now will compute a few examples to illustrate. Let $G = Z_{12} = \langle z\rangle$. Then the six normal subgroups of $G$ are $G_1 = 1$, $G_2 = \langle z^6\rangle$,   $G_3 = \langle z^4\rangle$, $G_4 = \langle z^3\rangle$, $G_6 = \langle z^2\rangle$, and $G_{12} = G$, and the six primitive idempotents of $\Q[Z_{12}]$ are 
\begin{eqnarray*}
\varepsilon(G,1) &=&\frac{1}{3} - \frac{1}{3}z^6 - \frac{1}{6}(z^4+z^8) + \frac{1}{6}(z^2+z^{10})\\
\varepsilon(G,G_2) &=& \frac{1}{6}(1+z^6) - \dfrac{1}{6}(z^3+z^9)  -\frac{1}{12}(z^2+z^4+z^8+z^{10}) + \frac{1}{12}(z+z^5+z^7+z^{11})\\
\varepsilon(G,G_3) &=& \frac{1}{6}(1+z^4+z^8) - \dfrac{1}{6}(z^2+z^6+z^{10})\\
\varepsilon(G,G_4) &=& \frac{1}{6}(1+z^3+z^6+z^9) - \dfrac{1}{12}(z+z^2+z^4+z^5+z^7+z^8+z^{10}+z^{11})\\
\varepsilon(G,G_6) &=& \frac{1}{12}(1+z^2+z^4+z^6+z^8+z^{10}) - \dfrac{1}{12}(z+z^3+z^5+z^7+z^9+z^{11})\\
\varepsilon(G,G) &=& \frac{1}{12}(1+z+z^2+z^3+z^4+z^5+z^6+z^7+z^8+z^9+z^{10}+z^{11})
\end{eqnarray*}

As before, every subgroup of a cyclic group is characteristic, which implies that every subgroup  is an $S$-subgroup of every orbit Schur ring. Thus, the primitive idempotents of any orbit Schur ring are exactly the primitive idempotents of $\QG$. Consider 
\[S = \Span_\Q\{1, z^6, z^4+z^8, z^2+z^{10}, z+z^5+z^9, z^3+z^7+z^{11}\},\] which is not an orbit ring. But $S$ is a Schur ring over $G$ and its $S$-subgroups are $1$, $G_2$, $G_3$, $G_6$, and $G_{12}$. Therefore, the primitive idempotents of $S$ are 
\begin{eqnarray*}
\varepsilon(S,1) &=&\varepsilon(G,1)\\
\varepsilon(S,G_2) &=& \varepsilon(G,G_2) + \varepsilon(G,G_4) = \frac{1}{3}(1+z^6) - \frac{1}{6}(z^2+z^4+z^8+z^{10})\\
\varepsilon(S,G_3) &=& \varepsilon(G,G_3)\\
\varepsilon(S,G_6) &=& \varepsilon(G,G_6)\\
\varepsilon(S,G) &=& \varepsilon(G,G).
\end{eqnarray*} We have used \lemref{thm:IdempotentSum} to decompose each idempotent into a sum of primitive idempotents over $\QG$. We note that $\varepsilon(G,G_2) \notin S$ since the coefficients of $z^9$ and $z$ differ. Likewise, $\varepsilon(G,G_4)\notin S$. Thus, $\varepsilon(S,G_2)$ is primitive in $S$. 

For another example, consider the Schur ring $T$:
\[T = \Span_\Q\{1, z^6, z^4+z^{10}, z^2+z^{8}, z+z^3+z^5 + z^7+z^9+z^{11}\}.\] Then the $T$-subgroups are $1$, $G_2$, $G_6$, and $G_{12}$ and the primitive idempotents are
\begin{eqnarray*}
\varepsilon(T,1) &=&\varepsilon(G,1) + \varepsilon(G,G_3) = \frac{1}{2} - \frac{1}{2}z^6\\
\varepsilon(T,G_2) &=& \varepsilon(G,G_2) + \varepsilon(G,G_4)\\
\varepsilon(T,G_6) &=& \varepsilon(G,G_6)\\
\varepsilon(T,G) &=& \varepsilon(G,G).
\end{eqnarray*}  Since $\varepsilon(G,1), \varepsilon(G,G_3), \varepsilon(G,G_2), \varepsilon(G,G_4) \notin T$, $\varepsilon(T,G_1)$ and $\varepsilon(T,G_2)$ are primitive in $T$.

We present one last example. Consider the Schur ring $U$:
\[U = \Span_\Q\{1, z^4, z^8, z^2+z^6+z^{10}, z+z^5+z^9, z^3+z^7+z^{11}\}.\] Then the $U$-subgroups are $1$, $G_3$, $G_6$, and $G_{12}$ and the primitive idempotents are 
\begin{eqnarray*}
\varepsilon(U,1) &=&\varepsilon(G,1) + \varepsilon(G,G_2) + \varepsilon(G,G_4)\\
&=& \frac{2}{3} - \dfrac{1}{3}(z^4+z^8)\\
\varepsilon(U,G_3) &=& \varepsilon(G,G_3)\\
\varepsilon(U,G_6) &=& \varepsilon(G,G_6)\\
\varepsilon(U,G) &=& \varepsilon(G,G).
\end{eqnarray*}  Clearly, $\varepsilon(G,1)$, $\varepsilon(G,2)$, and $\varepsilon(G,G_4)\notin U$. Therefore, $\varepsilon(U,1)$ is primitive in $U$.


\section{Decomposition of Schur Rings over Cyclic Groups}\label{sec:DecompSchur}

In \thmref{thm:LatticeIdem}, we determined the Wedderburn decomposition of any lattice Schur ring. For cyclic groups, this decomposition characterizes lattice Schur rings.

\begin{Prop}\label{prop:latticeWedderburn} Let $G$ be a finite cyclic group and let $S$ be a Schur ring over $\QG$. Then $S\cong \bigoplus \Q$ if and only if $S = S(\L)$ for some lattice $\L$ of subgroups of $G$.
\end{Prop}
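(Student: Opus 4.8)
The plan is to prove the two implications separately, with the reverse direction being essentially immediate and the forward direction resting on the work already done in \thmref{thm:ScycIdem}.

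For the reverse implication, I would argue directly from \thmref{thm:LatticeIdem}. If $S = S(\L)$ for some lattice $\L$ of subgroups of $G$, then that theorem (applied with $F = \Q$) yields $S(\L) \cong \bigoplus_n \Q$, where $n$ counts the nonzero idempotents $\varepsilon(\L,H)$; in fact, since $G$ is cyclic, \lemref{lem:CycIdem} guarantees that $\varepsilon(\L,H) \neq 0$ for every $H \in \L$, so $n = |\L|$. In any case $S \cong \bigoplus \Q$, which settles this direction.

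For the forward implication, suppose $S \cong \bigoplus \Q$. I would first record that $S$ is a commutative algebra, being a subalgebra of $\Q[G]$ with $G$ cyclic (hence abelian), and that the hypothesis exhibits $S$ as a semisimple $\Q$-algebra all of whose Wedderburn components are isomorphic to $\Q$. By uniqueness of the Wedderburn decomposition, each such component is therefore one-dimensional, so every primitive idempotent $e$ of $S$ generates a one-dimensional ideal $eS = \Q e$, and consequently $S$ is the $\Q$-span of its primitive idempotents. Now let $\L$ be the lattice of normal $S$-subgroups of $G$ introduced at the start of \secref{sec:IdemSchur}. By \thmref{thm:ScycIdem}, the complete set of primitive idempotents of $S$ is exactly $\{\varepsilon(S,H) \mid \overline{H} \in S\} = \{\varepsilon(\L,H) \mid H \in \L\}$. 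Combining these two facts gives $S = \Span_\Q\{\varepsilon(\L,H) \mid H \in \L\}$, and the right-hand side equals $S(\L)$ by \propref{prop:S=T}. Hence $S = S(\L)$, as desired.

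The substantive content is already absorbed by the earlier results: the decisive fact is that the primitive idempotents of $S$ all lie in, and span, the maximal lattice subring $S(\L)$, which is precisely the conjunction of \thmref{thm:ScycIdem} and \propref{prop:S=T}. Once that identification is available, the forward direction is the elementary observation that a commutative $\Q$-algebra of type $\bigoplus \Q$ is spanned by its primitive idempotents. The only step requiring explicit care is verifying that the isomorphism $S \cong \bigoplus \Q$ forces each simple component to be $\Q$ itself, rather than a proper field extension or a matrix algebra, so that the primitive idempotents indeed span all of $S$; this follows at once from the prescribed isomorphism type but is worth stating so that the dimension bookkeeping is transparent.
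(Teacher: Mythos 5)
Your proposal is correct and takes essentially the same route as the paper: \thmref{thm:LatticeIdem} for the reverse implication, and \thmref{thm:ScycIdem} combined with \propref{prop:S=T} for the forward one. The extra bookkeeping you supply (one-dimensionality of each Wedderburn component, and nonvanishing of the idempotents via \lemref{lem:CycIdem}) only makes explicit what the paper's proof leaves implicit.
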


\begin{proof} If $S = S(\L)$, then $S\cong \bigoplus \Q$ by \thmref{thm:LatticeIdem}. Suppose that $S \cong \bigoplus \Q$. Then the complete set of primitive idempotents of $S$ forms a basis. But this set is $\{\varepsilon(S,H) \mid \overline{H}\in S\}$ by \thmref{thm:ScycIdem}, and $\Span\{\varepsilon(S,H) \mid \overline{H}\in S\}$ is a lattice Schur ring by \propref{prop:S=T}.
\end{proof}

Let $\K_n = \Q(\zeta_n)$. From Galois theory, we know there is a one-to-one correspondence between the subfields of the cyclotomic field $\K_n$ and the subgroups of the Galois group $\G(\K_n/\Q) $. Each of these subfields is the fixed subalgebra of some Galois subgroup and is spanned by sums of roots of unity. 

In more generality, let $A$ be an algebra over a field $F$ and let $\H \le \Aut_F(A)$ be finite, where $\Aut_F(A)$ is the group of $F$-algebra automorphisms of $A$. Then 
\[A^\H = \{\alpha \in A\mid \sigma(\alpha) = \alpha,\;\text{for all}\;\sigma\in \H\},\] and $A^\H$ is spanned by the orbit sums of $\H$. In fact, if $B$ is a basis of $A$, then $A^\H$ is spanned by the orbit sums of elements from $B$. In particular, orbit Schur rings are spanned by the orbit sums of elements of $G$.

Let $\omega_n : \Q[Z_n] \to \Q(\zeta_n)$ be the $\Q$-algebra map uniquely defined by the relation $\omega_n(z) = \zeta_n$. If $A$ is any subalgebra of $\Q[Z_n]$ containing $1$, then $\Q\subseteq \omega_n(A) \subseteq \K_n$. Thus, $\omega_n(A)$ is a subfield of $\K_n$. In particular, $\omega_n(S)$ is a subfield of $\K_n$ for any Schur ring over $Z_n$.

Every automorphism of $Z_n$ is determined by $z\mapsto z^m$, and every automorphism on $\K_n$ is determined by $\zeta\mapsto \zeta^m$, where $m$ is unique modulo $n$. Identifying these congruence classes provides an automorphism between $\Aut(Z_n)$ and $\G(\K_n/\Q)$. For this reason, we may identify the two groups as the same and denote this group as $\G_n$. In particular, if $\sigma \in \G_n$, then $\sigma\circ \omega_n = \omega_n\circ \sigma$. Therefore, $\omega_n$ preserves the orbit structure of any subgroup of $\G_n$. This implies that the $\H$-periods, for $\H\le \G_n$, are preserved by $\omega_n$. 

\begin{Lem}\label{prop:RationalCyclicLattice} For each $\H \le \G_n$, $\omega_n(\Q[Z_n]^\H) = \K_n^\H$.
\end{Lem}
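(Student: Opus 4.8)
The plan is to establish the two inclusions $\omega_n(\Q[Z_n]^\H) \subseteq \K_n^\H$ and $\K_n^\H \subseteq \omega_n(\Q[Z_n]^\H)$ separately, using as the central tool the equivariance relation $\sigma\circ\omega_n = \omega_n\circ\sigma$ for all $\sigma\in\G_n$ noted just above, together with the surjectivity of $\omega_n$.

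First I would observe that $\omega_n$ is surjective: its image is a $\Q$-subalgebra of $\K_n$ containing $\omega_n(z) = \zeta_n$, and since $\K_n = \Q(\zeta_n)$ is a finite field extension, it equals the subalgebra $\Q[\zeta_n]$ generated by $\zeta_n$. Hence $\omega_n(\Q[Z_n]) = \K_n$. For the forward inclusion, I would take $\alpha\in\Q[Z_n]^\H$, so that $\sigma(\alpha) = \alpha$ for every $\sigma\in\H$. Applying equivariance gives $\sigma(\omega_n(\alpha)) = \omega_n(\sigma(\alpha)) = \omega_n(\alpha)$ for each $\sigma\in\H$, whence $\omega_n(\alpha)\in\K_n^\H$; this yields $\omega_n(\Q[Z_n]^\H)\subseteq\K_n^\H$.

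The reverse inclusion is the substantive step, handled by an averaging (Reynolds) argument, which is available because $\Q$ has characteristic zero and $\H$ is finite. Given $\beta\in\K_n^\H$, surjectivity provides $\alpha\in\Q[Z_n]$ with $\omega_n(\alpha) = \beta$. Setting $\tilde\alpha = \frac{1}{|\H|}\sum_{\sigma\in\H}\sigma(\alpha)$, one checks that $\tilde\alpha\in\Q[Z_n]^\H$, since any $\tau\in\H$ merely reindexes the defining sum. Then, using equivariance term-by-term and the $\H$-invariance of $\beta$,
\[\omega_n(\tilde\alpha) = \frac{1}{|\H|}\sum_{\sigma\in\H}\omega_n(\sigma(\alpha)) = \frac{1}{|\H|}\sum_{\sigma\in\H}\sigma(\omega_n(\alpha)) = \frac{1}{|\H|}\sum_{\sigma\in\H}\sigma(\beta) = \beta.\]
Thus $\beta\in\omega_n(\Q[Z_n]^\H)$, giving $\K_n^\H\subseteq\omega_n(\Q[Z_n]^\H)$ and hence equality.

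I do not anticipate a serious obstacle; the only delicate point is to keep straight that the same symbol $\sigma$ denotes two different automorphisms — one of the group algebra $\Q[Z_n]$ and one of the field $\K_n$ — whose compatibility is precisely the content of the identification $\Aut(Z_n)\cong\G(\K_n/\Q)$ and the relation $\sigma\circ\omega_n=\omega_n\circ\sigma$. Everything else reduces to the standard fact that, in characteristic zero, averaging over a finite group is a projection onto the fixed subspace that commutes with the equivariant map $\omega_n$.
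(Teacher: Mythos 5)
Your proof is correct, and it runs on the same engine as the paper's --- the equivariance $\sigma\circ\omega_n=\omega_n\circ\sigma$ for $\sigma\in\G_n$ --- but the execution is genuinely different. The paper's proof is a single sentence: it appeals to the fact, recorded just before the lemma, that a fixed algebra $A^\H$ is spanned by the $\H$-orbit sums of a basis of $A$, and then notes that $\omega_n$ carries orbit sums of $\H$ in $Z_n$ onto orbit sums of $\H$ in $\K_n$, so that a spanning set of $\K_n^\H$ lies in the image of $\Q[Z_n]^\H$ (this simultaneously gives both inclusions). You instead lift an arbitrary $\beta\in\K_n^\H$ to any preimage $\alpha$ --- using surjectivity of $\omega_n$ onto $\K_n$, which you rightly verify via $\Q[\zeta_n]=\Q(\zeta_n)$ --- and then symmetrize with the Reynolds average $\tilde\alpha=\frac{1}{|\H|}\sum_{\sigma\in\H}\sigma(\alpha)$, proving the forward inclusion separately from equivariance. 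The two arguments are close cousins, since the orbit-sum description of fixed algebras is itself proved by averaging; but your route has a small technical advantage here: the natural spanning set of $\K_n$ hit by the group elements, namely $\{\zeta_n^k\}$, is not a $\Q$-basis of $\K_n$ for $n>1$ (it has $n$ elements while $\dim_\Q\K_n=\phi(n)$), so the paper's cited fact, stated for bases, strictly speaking needs the routine extension to $\H$-stable spanning sets. Your averaging argument bypasses that point entirely, at the cost of being a few lines longer.
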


\begin{proof}
Using the map $\omega_n : \Q[Z_n] \to \K_n$, we have that the orbit sums of $\H$ in $Z_n$ map onto the orbit sums of $\H$ in $\K_n$. Thus, $\omega_n : \Q[Z_n]^\H \to \K_n^\H$ is  surjective.
\end{proof}


Let $\pi_d : G \to G/G_{n/d}$ be the natural quotient map. Let $\H\le \G_n$. Now, $\H$ can be viewed as a set of integers modulo $n$. Also, for any $d\mid n$,  $\H$ can be viewed as a set of integers modulo $d$ (with possible redundancies). Thus, we may use $\H$ to denote a subgroup of $\G_n$  and a subgroup of $\G_d$.

\begin{Lem}\label{thm:invariantAuto} Let $G = Z_n$. For each $\H\le \G_n$, $\pi_d(\QG^\H) = \Q[G_d]^\H$.
\end{Lem}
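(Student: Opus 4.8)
The plan is to establish the set equality by proving the two inclusions $\pi_d(\QG^\H) \subseteq \Q[G_d]^\H$ and $\Q[G_d]^\H \subseteq \pi_d(\QG^\H)$ separately. The engine behind both inclusions is the observation that $\pi_d$ \emph{intertwines} the action of $\H$ on $G = Z_n$ with its action on $G_d \cong G/G_{n/d}$. To make this precise, I would first record that each $\sigma \in \H \le \G_n$ is the map $z \mapsto z^m$ for some $m$ coprime to $n$, and that the reduction of $\sigma$ in $\G_d$ is the map sending a generator of $G_d$ to its $m$th power, with $m$ now read modulo $d$. Since $\pi_d$ is a group homomorphism, $\pi_d(\sigma(z)) = \pi_d(z^m) = \pi_d(z)^m = \sigma(\pi_d(z))$, so $\pi_d \circ \sigma = \sigma \circ \pi_d$ as maps on $G$, and hence on $\QG$ by linearity. (The reduction $\H \to \G_d$ is well-defined and lands in $\G_d$ because $\gcd(m,n)=1$ forces $\gcd(m,d)=1$ for every $d \mid n$.)

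For the forward inclusion, I would take $\alpha \in \QG^\H$ and observe that for each $\sigma \in \H$ the intertwining relation gives $\sigma(\pi_d(\alpha)) = \pi_d(\sigma(\alpha)) = \pi_d(\alpha)$, whence $\pi_d(\alpha) \in \Q[G_d]^\H$. For the reverse inclusion, I would use that $\Q[G_d]^\H$ is spanned by the orbit sums $\overline{O}$ of the $\H$-action on $G_d$, as recorded above for orbit rings. Given such an orbit $O$, its full preimage $\pi_d^{-1}(O)$ is a union of cosets of $\ker \pi_d = G_{n/d}$ and is $\H$-invariant as a set, since $\sigma(\pi_d^{-1}(O)) = \pi_d^{-1}(\sigma(O)) = \pi_d^{-1}(O)$ for all $\sigma \in \H$. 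Therefore $\overline{\pi_d^{-1}(O)} \in \QG^\H$, and applying the linear extension of $\pi_d$ yields $\pi_d(\overline{\pi_d^{-1}(O)}) = |G_{n/d}|\,\overline{O} = (n/d)\overline{O}$. Dividing by the nonzero rational $n/d$ shows $\overline{O} \in \pi_d(\QG^\H)$, so every spanning orbit sum of $\Q[G_d]^\H$ lies in $\pi_d(\QG^\H)$, giving the inclusion and hence the equality.

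The one step requiring care, and the only real obstacle, is the compatibility of the two $\H$-actions under $\pi_d$: one must verify both that reduction modulo $d$ is a legitimate group homomorphism $\G_n \to \G_d$ and that it genuinely agrees with the automorphism induced on the quotient $G/G_{n/d}$ identified with $G_d$. Once this intertwining is in hand, the forward inclusion is immediate and the reverse reduces to the routine preimage-of-an-orbit computation above. (As a remark, one could alternatively first invoke \corref{cor:HadamardMap}, using that $G_{n/d}$ is characteristic in $Z_n$ so that $\overline{G_{n/d}} \in \QG^\H$, to conclude that $\pi_d(\QG^\H)$ is a Schur ring; but since the lemma only asserts a set equality, the direct two-inclusion argument suffices and avoids this detour.)
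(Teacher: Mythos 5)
Your proof is correct and takes essentially the same approach as the paper: both arguments rest on the $\H$-equivariance of $\pi_d$ (i.e., $\pi_d\circ\sigma = \sigma\circ\pi_d$) together with the fact that the fixed algebras $\QG^\H$ and $\Q[G_d]^\H$ are spanned by orbit sums. The only organizational difference is that the paper pushes orbit sums of $G$ forward, noting each maps to a scalar multiple of an orbit sum in $G_d$ (which yields surjectivity immediately), whereas you pull orbits of $G_d$ back to $\H$-invariant subsets of $G$; your version also spells out the forward inclusion that the paper leaves implicit.
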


\begin{proof}
Using the map $\pi_d : \Q[G] \to \Q[G/G_{n/d}]$, the orbit sums of $\H$ in $G$ map onto the orbit sums of $\H$ in $G_d$ scaled possibly by a constant depending on the orbit. Thus, $\pi_d : \Q[G]^\H \to \Q[G/G_{n/d}]^\H$ is surjective.
\end{proof}

The next result  generalizes the theorem of Perlis and Walker to all orbit Schur rings.

\begin{Thm}\label{thm:Sperlis} Let $G = Z_n$, a cyclic group of order $n$, and let $\H \le \Aut(G)$. Then 
\[\QG^\H \cong \bigoplus_{d\mid n} \Q(\zeta_d)^\H.\] \end{Thm}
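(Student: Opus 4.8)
The plan is to descend the classical Perlis--Walker isomorphism to $\H$-invariants, using the fact that in a cyclic group every subgroup is characteristic, so that $\H$ respects the full Wedderburn decomposition. Writing $\varepsilon_d = \varepsilon(G,G_{n/d})$ and $A_d = \varepsilon_d\,\QG$, the Perlis--Walker theorem is precisely the statement that the combined map $(\omega_d\circ\pi_d)_{d\mid n}\colon \QG \to \bigoplus_{d\mid n}\Q(\zeta_d)$ is an algebra isomorphism, whose $d$th coordinate $\omega_d\circ\pi_d$ restricts to an isomorphism $A_d\xrightarrow{\sim}\Q(\zeta_d)$ and vanishes on $A_{d'}$ for $d'\neq d$. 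First I would record that each $\varepsilon_d$ is $\H$-fixed: every $G_{n/d}$ is characteristic in the cyclic group $G$, so $\sigma(\widehat{G_{n/d}}) = \widehat{\sigma(G_{n/d})} = \widehat{G_{n/d}}$ for all $\sigma\in\H$, and since $\varepsilon_d$ is a polynomial in such $\widehat{M}$ it too is $\H$-fixed.

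Because each $\varepsilon_d$ is fixed, each ideal $A_d$ is $\H$-stable, and writing $\alpha = \sum_{d}\alpha_d$ with $\alpha_d\in A_d$ one sees that $\alpha$ is $\H$-invariant if and only if every $\alpha_d$ is. Hence the decomposition descends to invariants,
\[\QG^\H = \bigoplus_{d\mid n} A_d^{\H},\]
and it remains to identify $A_d^{\H}\cong \Q(\zeta_d)^\H$. Since $\omega_d\circ\pi_d$ is $\H$-equivariant (the automorphism $z\mapsto z^m$ reduces modulo $d$ to the Galois automorphism $\zeta_d\mapsto\zeta_d^m$, which is exactly the identity $\sigma\circ\omega_n=\omega_n\circ\sigma$ applied at level $d$), I would apply \lemref{thm:invariantAuto} and then \lemref{prop:RationalCyclicLattice} at level $d$ to compute
\[(\omega_d\circ\pi_d)(\QG^\H) = \omega_d\bigl(\Q[G_d]^\H\bigr) = \Q(\zeta_d)^\H.\]
As $\omega_d\circ\pi_d$ vanishes on every $A_{d'}$ with $d'\neq d$, the left-hand side equals $(\omega_d\circ\pi_d)(A_d^{\H})$, and since this map is injective on $A_d$ we conclude $A_d^{\H}\cong\Q(\zeta_d)^\H$.

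Assembling the two displays then yields
\[\QG^\H = \bigoplus_{d\mid n} A_d^{\H} \cong \bigoplus_{d\mid n}\Q(\zeta_d)^\H,\]
as desired. I expect the only delicate step to be the $\H$-equivariance of $\omega_d\circ\pi_d$, that is, the bookkeeping that matches an automorphism of $G$, its reduction modulo $d$ as an automorphism of $G_d$, and the associated Galois automorphism of $\Q(\zeta_d)$. Since the excerpt already fixes the identification $\G_n\cong\Aut(Z_n)$ together with $\sigma\circ\omega_n=\omega_n\circ\sigma$ and supplies \lemref{thm:invariantAuto} and \lemref{prop:RationalCyclicLattice}, this amounts to a careful but routine matching of the three actions rather than any new ingredient.
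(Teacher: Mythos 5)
Your proposal is correct and takes essentially the same route as the paper: both decompose $\QG^\H$ along the Perlis--Walker idempotents $\varepsilon_d$, which lie in the invariant ring because every subgroup of a cyclic group is characteristic, and both identify the $d$th component with $\Q(\zeta_d)^\H$ by applying \lemref{thm:invariantAuto} followed by \lemref{prop:RationalCyclicLattice}. The only cosmetic difference is bookkeeping: the paper realizes the identification through explicit section maps $\varphi_d \colon \K_d \to \QG\varepsilon_d$ and the projections $\varphi_d\circ\omega^{(d)}$, whereas you work directly with $A_d^\H$ and the injectivity of $\omega_d\circ\pi_d$ on $A_d$.
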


\begin{proof} 
Let $S = \QG^\H$. Since all subgroups of $G$ are characteristic, the primitive idempotents of $S$ are exactly the primitive idempotents of $\QG$, by \corref{thm:CyclicIdempotents}.  By the Perlis-Walker Theorem, each idempotent corresponds to a cyclotomic field and hence to a divisor of $n$. Let $\varepsilon_d$ denote the idempotent in $\QG$ such that $\QG\varepsilon_d \cong \K_d$. Let $\omega^{(d)} : \QG \to \K_d$ be the representation afforded by $\omega^{(d)}(z) = \zeta_d$ for each $d\mid n$. Using previous notation, we have $\omega^{(d)} = \omega_d\circ \pi_d$. For each element $x\in \K_d$, there exists some $\alpha \in \Q[G]$ such that $\omega^{(d)}(\alpha) = x$. Define $\varphi_d : \K_d \to \QG\varepsilon_d$ as $\varphi^{(d)}(x) = \alpha\varepsilon_d$. It is routine to check that $\varphi_d$ is an isomorphism. Then  
\[S = \bigoplus_{d\mid n} S\varepsilon_d.\]

 Now, the map $\varphi_d\circ \omega^{(d)}$ is multiplication by $\varepsilon_d$ and hence is the natural projection map $\QG \to \QG\varepsilon_d$. Thus, the restriction $\varphi_d\circ\omega^{(d)}: S\to S\varepsilon_d$ is the projection map.  Finally,
\begin{eqnarray*}
S\varepsilon_d &=& \varphi_d\circ \omega^{(d)}(S) = \varphi_d\circ \omega_d\circ \pi_d(S) = \varphi_d\circ \omega_d (\Q[G_d]^\H), \quad\text{by \lemref{thm:invariantAuto}},\\
&=& \varphi_d(\K_d^\H),\qquad\text{by \lemref{prop:RationalCyclicLattice}}.
\end{eqnarray*} But $\varphi_d|_{\K_d^\H}$ is injective and hence $\varphi_d : \K_d^\H \to S\varepsilon_d$ is an isomorphism, which finishes the proof.
\end{proof}

 In more generality, let $S$ be a Schur ring over a cyclic group. If $\overline{G_{n/d}} \in S$, then 
\[\omega^{(d)}(\varepsilon(S,G_{n/d})) = \omega_d\circ \pi_d(\varepsilon(S,G_{n/d})) = \omega_d(\varepsilon(\pi_d(S), 1)) = 1\in \K_d.\] In particular, Schur rings over cyclic groups decompose as sums of subfields of cyclotomic fields, where the degree of each cyclotomic field corresponds to the index of an $S$-subgroup. 

To illustrate this procedure, let $G = Z_{12}$ and let $S$, $T$, and $U$ be the Schur rings defined at the end of \secref{sec:IdemSchur}. 

First, $S$ has five primitive idempotents corresponding to the subgroups $G$, $G_6$, $G_3$, $G_2$, and $1$. Thus, $S$ has representations in $\Q$, $\K_2$, $\K_4$, $\K_6$, and $\K_{12}$. Since $\dim \Q = \dim \K_2 =1$,  $S\varepsilon(S,G) \cong S\varepsilon(S,G_6) \cong \Q$, as $\Q$-algebras. Since $\dim \omega^{(4)}(S) = \dim \K_3 = 2$, we have $S\varepsilon(S,G_3) \cong \K_3 = \Q(\zeta_3)$. Also, $\dim \omega^{(6)}(S) = 1$, we have $S\varepsilon(S,G_2) \cong \Q$. This accounts for five of the six dimensions of $S$. Hence, $S\varepsilon(S,1) = 1$. Therefore, 
\[S \cong \Q^4\oplus \Q(i).\]

Next, $T$ has four primitive idempotents corresponding to the subgroups $G$, $G_6$, $G_2$, and $1$, which gives representations in $\Q$, $\K_2$, $\K_6$, and $\K_{12}$, respectively. Like before, $T\varepsilon(T,G) \cong T\varepsilon(T,G_6) \cong \Q$. Since $\omega^{(6)}(T) = 2$, it must be that $T\varepsilon(T,G_2) \cong \K_6 \cong \Q(\zeta_3)$. Since $\dim T = 5$, it follows that $T\varepsilon(T,1) \cong \Q$. Therefore, 
\[T \cong \Q^3\oplus \Q(\zeta_3).\]

Lastly, $U$ has four primitive idempotents corresponding to $G$, $G_6$, $G_3$, and $G_1$. Thus, $U\varepsilon(U,G) \cong U\varepsilon(U,G_6) \cong \Q$. By dimension considerations, it must be that $\dim U\varepsilon(U,G_3) = \dim U\varepsilon(U,1) = 2$. This implies that $U\varepsilon(U,G_3) \cong \K_4$ and $U\varepsilon(U,1) \cong \K_3$ or $\K_4$. Since $\omega^{(12)}(z^4) = \zeta_3$, we have
\[U \cong \Q^2\oplus \Q(i)\oplus \Q(\zeta_3).\]

\section{Non-cyclic Examples}\label{sec:noncyclic}

Under what conditions can \thmref{thm:ScycIdem} be extended, that is, for a Schur ring $S$ with rational coefficients, when does its maximal lattice Schur subring contain all the central idempotents of $S$? We say that a Schur ring with rational coefficients is \emph{tidy} is all of its central idempotents are contained in the maximal lattice Schur subring. Tidiness is equivalent to saying that all the central idempotents are spanned by the normal $S$-subgroups. It is clear that if the maximal lattice subring contains all the central idempotents, then each central idempotent is spanned by the normal $S$-subgroups. To see the other direction, note that $\{\varepsilon(S,H)\mid \overline{H}\in S\}$ is a set of central idempotents and the span of $\{\varepsilon(S,H)\mid \overline{H}\in S\}$ is the same as the span of the normal $S$-subgroups. So, if each primitive central idempotent is spanned by the normal $S$-subgroups, then they are spanned by $\{\varepsilon(S,H)\mid \overline{H}\in S\}$, which means that $\{\varepsilon(S,H)\neq 0\mid \overline{H}\in S\}$ must be the set of primitive central idempotents of $S$. Thus, $S$ is tidy. 

By above, all lattice Schur rings are tidy and every Schur ring over a cyclic groups also is tidy. Also, all group algebras over abelian groups are tidy. But tidiness does not hold for Schur rings in general, that is, there exists Schur rings for which the primitive central idempotents are not spanned by the normal $S$-subgroups. In fact, a counterexample can be found among abelian groups. Let $G = Z_3\times Z_3 = \langle a, b\rangle$ and let \[S = \Span_\Q\{1, a+a^2+b+b^2, ab+a^2b^2+ab^2+a^2b\}.\] Then $S$ is an orbit Schur ring afforded by the automorphism subgroup generated by the automorphism $\sigma : a\mapsto b, b\mapsto a^2$. Now, the lattice of $S$-subgroups is simply $\{1, G\}$. Thus, $\varepsilon(S,1) = 1 - \widehat{G}$ and $\varepsilon(S,G) = \widehat{G}$. By \lemref{thm:IdempotentSum}, 
\[\varepsilon(S,1) = [\varepsilon(G, \langle a\rangle) + \varepsilon(G, \langle b\rangle)] + [\varepsilon(G, \langle ab\rangle) + \varepsilon(G, \langle ab^2\rangle)].\] But 
\begin{eqnarray*}
\varepsilon(G,\langle a\rangle) + \varepsilon(G,\langle b\rangle) &=& \frac{1}{3}(1+a+a^2) + \frac{1}{3}(1+b+b^2) - \frac{2}{9}\overline{G}\\
& =& \frac{2}{3} + \frac{1}{3}(a+a^2+b+b^2) - \frac{2}{9}\overline{G} \in S.
\end{eqnarray*}
 Similarly, $\varepsilon(G,\langle ab\rangle) + \varepsilon(G,ab^2)\in S$. Hence, $\varepsilon(S,1)$ is imprimitive decomposes as a sum of two nonzero idempotents in $S$. Therefore, $S$ is not tidy.  

The above example also shows that \propref{prop:latticeWedderburn} does not hold for abelian groups, since $S\cong \Q^3$ but $S$ is not a lattice Schur ring. Likewise, the above Schur ring  shows that \thmref{thm:Sperlis} does not extend to Schur rings over abelian groups, since $\Q^3 \cong \QG^\H \not\cong \bigoplus_{d\mid n} a_d\Q(\zeta_d)^\H \cong \Q^5$.

The above example can be generalized to all elementary abelian groups of order $p^2$, where $p$ is an odd prime. On the other hand, all Schur rings over the Klein 4-group are tidy. So there do exists non-cyclic abelian groups for which every Schur ring is tidy. For another example, all Schur rings over an abelian groups of order 8 are tidy. To see this, we first define the concept of a Schur homomorphism. 

\begin{Def} Let $S$ and $T$ be Schur rings over groups $G$ and $H$, respectively. A \emph{Schur homomorphism} is a linear map $\sigma : S \to T$ which preserves $\cdot$, $\circ$, and $*$. 
\end{Def}

Now, let $S$ be a Schur ring over $G$  and let $\alpha\in S$ be nonzero such that $\alpha\circ \alpha = \alpha$ and $\alpha^2 = c\alpha$ for some scalar $c$. Since $\alpha\circ \alpha = \alpha$, there exists some subset $H\subseteq G$ such that $\alpha = \overline{H}$. Since $\overline{H}^2 = c\overline{H}$, we have that $H$ is a nonempty, multiplicatively closed subset of $G$. Since $|G|<\infty$, we have $H$ is a subgroup of $G$. Therefore, these ring-theoretic identities characterize the $S$-subgroups of $S$. In particular, if $\sigma : S\to T$ is a Schur isomorphism, then there is a one-to-one correspondence between the $S$- and $T$-subgroups.  If additionally, both groups are abelian, then all $S$- and $T$-subgroups are necessarily normal and $\sigma$ induces a Schur isomorphism on the maximal lattice subrings of these two Schur rings. Since Schur isomorphisms are ring isomorphisms, they preserve the primitive central idempotents of the Schur ring. If these  primitive central idempotents are spanned by $S$-subgroups, a Schur isomorphism will preserve this property as well, that is, Schur isomorphism preserves tidiness.  Therefore, if $S$ is a Schur ring over an abelian group and is Schur isomorphic to a Schur ring over a cyclic group, then $S$ is tidy.

Now, $Z_4\times Z_2$ has 28 distinct Schur rings and $Z_2\times Z_2\times Z_2$ has 100. By the above paragraph, we need to only consider the Schur rings which are not Schur isomorphic to a Schur ring over $Z_8$. This leaves eight Schur isomorphism types among the two groups. Two of these Schur rings are the group algebras $\Q[Z_4\times Z_2]$ and $\Q[Z_2\times Z_2\times Z_2]$, but abelian group algebras are tidy by \propref{thm:AbelianIdempotents}. Four of the six remaining types are Schur isomorphic to lattice Schur rings, which are tidy. The remaining two types are Schur isomorphic to the wedge products $\Q[Z_4] \wedge \Q[Z_2\times Z_2]$ and $\Q[Z_2\times Z_2] \wedge \Q[Z_4]$. Both Schur rings have the same Wedderburn decomposition, which is $\Q^4\oplus \Q(i)$. Thus, both Schur rings contain five primitive idempotents. Both Schur rings contain six $S$-subgroups which produce five nonzero idempotents. Since these must be the primitive idempotents, the central idempotents of these Schur rings are spanned by the $S$-subgroups. Therefore, each Schur ring over an abelian group of order 8 is tidy.

Considering the examples from above, it is not yet clear for which abelian groups every Schur ring is tidy. Hopefully, future efforts will answer this question.

\underline{Acknowledgments}: The contents of this paper are part of the author's doctoral dissertation, written under the supervision of Stephen P. Humphries, that will be submitted to Brigham Young University. All computations made in preparation of this paper were accomplished using the computer software Magma \cite{Magma}.

\bibliographystyle{acm}
\bibliography{Srings}

\end{document}